\address{Graduate School of Mathematics Nagoya University, Chikusa-ku
Nagoya 464-8602 Japan}
\email{kazuto.iijima@math.nagoya-u.ac.jp}
\thanks{}
\dedicatory{}
\newtheorem{definition}{Definition.}[section]
\newtheorem{theorem}[definition]{Theorem.}
\newtheorem{proposition}[definition]{Proposition.}
\newtheorem{example}[definition]{Example.}
\newtheorem{corollary}[definition]{Corollary.}
\newtheorem{lemma}[definition]{Lemma.}
\newtheorem{problem}[definition]{Problem.}
\begin{document}

\title[]{A $q$-multinomial expansion of LLT coefficients and plethysm multiplicities}
\author[K.~IIJIMA]{Kazuto Iijima}
\date{}
\maketitle

\thispagestyle{empty}

\begin{abstract}
Lascoux, Leclerc and Thibon\cite{LLT} introduced a family of symmetric polynomials, called LLT polynomials.
We prove a $q$-multinomial expansion of the coefficients of LLT polynomials 
in the case where $ \boldsymbol{\mu} = \underbrace{(\mu,\cdots,\mu)}_{n}$
and define a $q$-analog of a sum of the plethysm multiplicities. 

\end{abstract}

%%%%%%%%%%%%%%%%%%%%%%%
%
%  Introduction
%
%%%%%%%%%%%%%%%%%%%%%%%

\section{Introduction}

Lascoux, Leclerc and Thibon introduced a family of symmetric polynomials, called LLT polynomials, by using combinatorial objects,
  \textit{ribbon tableaux} and their \textit{spins}\cite{LLT}.
They showed that LLT polynomials are related to the Fock space representation of the quantum affine algebra $U_q(\widehat{sl_n})$.
Leclerc and Thibon gave a $q$-analog of Littlewood-Richardson coefficients by using LLT polynomials\cite{LT}.
Haglund, Haiman, Loehr, Remmel, and Ulyanov\cite{HH1} gave a variant definition of LLT polynomials as follows.

Let $$ \boldsymbol{\mu} = (\mu^{(0)},\mu^{(1)},\cdots,\mu^{(n-1)}) $$ 
be an $n$-tuple of Young diagrams.
Let $\mathrm{SSTab}(\boldsymbol{\mu})$ be the set of tuples of semistandard tableaux 
  $\boldsymbol{T}=(T_0,\cdots,T_{n-1})$, where $T_i$ has shape $\mu^{(i)}$.
Given $\boldsymbol{T}=(T_0,T_1,\cdots,T_{n-1}) \in \mathrm{SSTab}(\boldsymbol{\mu})$,
  we say that two entries $T^{(i)}(u) > T^{(j)}(v)$ form an \textit{Inversion} if either
\begin{align*}
(\mathrm{i} ). \,\, i<j  \text{ and } c(u)=c(v), \text{ or} \\
(\mathrm{ii}). \,\, i>j  \text{ and } c(u)=c(v)-1 ,
\end{align*}
where $c(u)$ is the content of $u$.
Denote by Inv$(\boldsymbol{T})$ the number of Inversions in $\boldsymbol{T}$.

Then the \textit{LLT polynomial} indexed by $\boldsymbol{\mu}$ is defined as 
$$ G_{\boldsymbol{\mu}}(x;q) =
      \sum_{\boldsymbol{T} \in \mathrm{SSTab}(\boldsymbol{\mu})} q^{\mathrm{Inv}(\boldsymbol{T})} x^{\boldsymbol{T}} .$$
Also, we denote $G_{\boldsymbol{\mu},\nu}(q)$ by the coefficient of
$x^{\nu}=x_1^{\nu_1}x_2^{\nu_2}\cdots$ in the LLT polynomial $G_{\boldsymbol{\mu}}(x;q)$, and call it \textit{LLT coefficient}.

In this paper, we are interested in the case where 
$\mu^{(0)}=\mu^{(1)} = \cdots = \mu^{(n-1)} = \mu $, 
i.e. $$ \boldsymbol{\mu} = \underbrace{(\mu,\cdots,\mu)}_{n}.$$

\vspace{1em}

Our main results are the following two expressions of the LLT coefficient $G_{\boldsymbol{\mu},\nu}(q)$;
 Theorem.\ref{fermionicformula} and Theorem.\ref{mainthm}.

$\bf{Theorem.A}\,(\bf{Th.\ref{fermionicformula}}.)$
($q$-multinomial expansion of LLT coefficients)

\it{Let} $\boldsymbol{\mu}=(\underbrace{\mu,\ldots,\mu}_n)$.
Then
$$ G_{\boldsymbol{\mu},\nu}(q)=\sum_{(T_0 \leq \cdots \leq T_{n-1}) \in \mathrm{SSTab}(\boldsymbol{\mu},\nu)}
    q^{\mathrm{Inv}(T_0 \leq \cdots \leq T_{n-1})} \biggl[ \begin{array}{c} n \\ \rho \end{array} \biggr]_q,  $$
where the summation is taken over all increasing $n$-tuples of semistandard tableaux with repect to the total order 
defined in \ref{totalorder} and $\rho$ is the composition which determined by 
$T_0=\cdots=T_{\rho_1-1} < T_{\rho_1}=\cdots=T_{\rho_1+\rho_2-1} < T_{\rho_1+\rho_2}= \cdots$.

\vspace{1em}

$\bf{Theorem.B}\,(\bf{Th.\ref{mainthm}}).$
\it{Let} $ \boldsymbol{\mu} = \underbrace{(\mu,\cdots,\mu)}_{n} .$
There is a statistics $\alpha \colon \mathrm{SSTab}(\boldsymbol{\mu})
     \rightarrow \mathbb{Z}$ satisfying
$$ G_{\boldsymbol{\mu},\nu}(q) = \sum_{\boldsymbol{T} \in \mathrm{SSTab}(\boldsymbol{\mu},\nu)}
                                     q^{n \alpha(\boldsymbol{T}) + \mathrm{maj}(\boldsymbol{T}) + d_{\boldsymbol{\mu}}} , $$
  where $ d_{\boldsymbol{\mu}} = \mathrm{min} \{ \mathrm{Inv}(\boldsymbol{T})| \boldsymbol{T} \in \mathrm{SSTab}(\boldsymbol{\mu}) \} $
  is the minimum Inversion number on the set of semistandard tableaux of shape $\boldsymbol{\mu}$.

\vspace{1em}

\rm{Next}, we explain the representation theoretical meaning of Theorem.B.

Let $$ \boldsymbol{\mu} = (\mu^{(0)},\mu^{(1)},\cdots,\mu^{(n-1)}) $$ be an $n$-tuple of Young diagrams.
The LLT polynomial $G_{\boldsymbol{\mu}}(x;q)$ is symmetric in the variables $x$(\cite{LLT}).
Define a $q$-analog of Littlewood-Richardson coefficients by
$$ G_{\boldsymbol{\mu},\nu}(x;q) \coloneqq
  \sum_{\nu \vdash n} \widetilde{LR}_{\mu^{(0)},\cdots,\mu^{(n-1)}}^{\nu}(q) s_\nu(x) \,\,, $$
where $s_\nu(x)$ is the Schur function corresponding to the partition $\nu$.
Then
$\widetilde{LR}_{\mu^{(0)},\cdots,\mu^{(n-1)}}^{\nu}(q)$ has non-negative coefficients\cite{L}.

This positivity is deduced from the deep connection of LLT polynomials to the Fock space representation of
    the quantum affine algebra $U_q(\hat{sl}_n)$ and Kazhdan-Lusztig polynomials\cite{L}.

From this positivity we can consider Inversion number as a statistics on each irreducible component of
  the $GL_N$-module $V_{\mu^{(0)}} \otimes \cdots \otimes V_{\mu^{(n-1)}}$.

Again, we consider the case where $\mu^{(0)}=\mu^{(1)} = \cdots = \mu^{(n-1)} = \mu $, i.e.
$$ \boldsymbol{\mu} = \underbrace{(\mu,\cdots,\mu)}_{n} , $$
 and discuss a $q$-analog of the multiplicities in the submodule 
 $V_\mu^{\otimes n}[\zeta_n^i] \subset V_\mu^{\otimes n}$; 
 the $\zeta_n^i$-eigenspace of the action of a Coxeter element of the symmetric group $\mathfrak{S}_n$,   
 where $\zeta_n$ is a primitive root of unity.

\vspace{1em}

Let $G_{\boldsymbol{\mu}}(q;x)=\sum_{j \geq 0} a_j(x) q^j$ be the LLT polynomial.
For $0 \leq i \leq n-1$, set
$$ G_{\boldsymbol{\mu}}^{(i)}(x;q) \coloneqq \sum_{j \geq 0} a_{jn+i+d_{\boldsymbol{\mu}}}(x) q^{jn+i+d_{\boldsymbol{\mu}}} .$$
From Theorem.B, we can regard this polynomial $G_{\boldsymbol{\mu}}^{(i)}(x;q)$ as a graded character of $V_\mu^{\otimes n}[\zeta_n^i]$, 
i.e. the coefficient of $s_\nu(x)$ in $G_{\boldsymbol{\mu}}^{(i)}(x;q)$ gives 
a $q$-analog of a multiplicity in $V_\mu^{\otimes n}[\zeta_n^i]$.

\vspace{1em}

This paper is organized as follows.
In section 2, we prepare notations.
In section 3, we review the definition of LLT polynomials by \cite{HH1} 
   , positivity of $\widetilde{LR}_{\mu^{(0)},\cdots,\mu^{(n-1)}}^{\nu}(q)$
   , and the definition of a $q$-analog of Littlewood-Richardson coefficients by \cite{LLT}.
In section 4, we prove the $q$-multinomial expansion of LLT coefficients.

In section 5, we prove Theorem.B
In section 6, we discuss a $q$-analog of multiplicities in $V_\mu^{\otimes n}[\zeta_n^i]$.
In section 7, we discuss a $q$-analog of plethysm multiplicities.

% We review about plethysms for general linear groups.
% We rewrite the result in section 4 in terms of plethysm.
% As a result, we can consider $\widetilde{LR}_{\boldsymbol{\mu},\nu}^{(i)}(q)$ as a $q$-analog of a sum of plethysm multiplicities.

%%%%%%%%%%%%%%%%%%%%%%%
%
%  Preliminaries
%
%%%%%%%%%%%%%%%%%%%%%%%

\section{Preliminaries}

We start with introducing the notations.

\subsection{$q$-integers}

We define the $q$-integer $[k]_{q}$ by

\begin{equation*}
[k]_{q}= \frac{1-q^k}{1-q}\,\,\,,
\end{equation*}

and the $q$-binomial coefficients and the $q$-multinomial coefficients by

\begin{equation*}
\left[ \begin{array}{c}
n \\ k
\end{array}
\right]_{q} = \frac{[n]_{q}!}{[k]_{q}![n-k]_{q}!} \,\,\,, 
\end{equation*}

\begin{equation*}
\left[ \begin{array}{c}
n \\ k_1,k_2,\cdots,k_r
\end{array}
\right]_{q} = \frac{[n]_{q}!}{[k_1]_{q}![k_2]_{q}!\cdots[k_r]_{q}!} \,, 
\end{equation*}

where $ [k]_{q}!= [k]_{q}[k-1]_{q} \cdots [1]_{q} $ and $ k_1+k_2+ \cdots +k_r=n. $

%%%%%%%%%%%%%%%%
\subsection{Young diagram}

A \textit{partition} of $n$ is a non-increasing sequences of non-negative integers summing to $n$.
We write $\lambda \vdash n$ or $| \lambda |=n$ if $\lambda$ is a partition of $n$.
And we use the same notation $\lambda$ to represent the Young diagram of size $n$ corresponding to $\lambda$.
The $\it{content}$ of a cell $u=(i,j)$ in a Young diagram $\lambda$ is the integer $c(u)=j-i$.

For a given Young diagram $\lambda$, a \textit{Young tableau} of shape $\lambda$ is
  a map from the set of cells (in the Young diagram $\lambda$) to the set of positive integers.
A \textit{semistandard} tableau is a Young tableau whose entries increase weakly
  along the rows and increase strictly down the columns.
We denote the set of semistandard tableaux of shape $\lambda$ by SSTab$(\lambda)$.
For a Young tableau $T$, the \it{weight} \rm{of} $T$ is the sequence
  $wt(T)=(\nu_1,\nu_2,\cdots)$, where $\nu_k$ is the number of entries equal to $k$.
We denote by SSTab$(\lambda,\nu)$ the set of semistandard tableaux of shape $\lambda$ with weight $\nu$.
For a tableau $T \in \mathrm{SSTab}(\lambda,\nu)$, we define its monomial $x^T=x^{wt(T)}=x_1^{\nu_1}x_2^{\nu_2}\cdots$.

Let  
$$ \boldsymbol{\mu} = (\mu^{(0)},\mu^{(1)},\cdots,\mu^{(n-1)}) $$
be an $n$-tuple of Young diagrams.
Let $\mathrm{SSTab}(\boldsymbol{\mu})$ be the set of tuples of semistandard tableaux
  $\boldsymbol{T}=(T_0,\cdots,T_{n-1})$, where $T_i$ has shape $\mu^{(i)}$.
Given $\boldsymbol{T}=(T_0,T_1,\cdots,T_{n-1}) \in \mathrm{SSTab}(\boldsymbol{\mu})$, 
we define its weight and monomial as 
$$ wt(\boldsymbol{T}) = wt(T_0) + \cdots +wt(T_{n-1})\,\,\,,\,\,\,
   x^{\boldsymbol{T}} = x^{T_0} \cdots x^{T_{n-1}}. $$ 

\vspace{1em}

%%%%%%%%%%%%%%%%
\subsection{a total order on $SSTab(\mu)$}

Next we define a total order in $\mathrm{SSTab}(\mu)$.
For a given semistandard tableau $T$, by reading $T$ from left to right in consecutive rows,
  starting from top to bottom, we obtain the word $word(T)$.
We define a total order $>$ on the set of words (in which entry is a positive integer) as the lexicographic order.

\begin{definition}
We define a total order $>$ on $\mathrm{SSTab}(\mu)$ as follows ;
\par Let $T,U \in \mathrm{SSTab}(\mu)$.
\par We define $T>U$ if $word(T)>word(U)$.
\label{totalorder}
\end{definition}

For $T \in \mathrm{SSTab}(\boldsymbol{\mu})$,
we define maj$(T)$ with respect to this totally order on $\mathrm{SSTab}(\mu)$.

\begin{example}
\rm{Let} $T_1= %WinTpicVersion3.08
\unitlength 0.1in
\begin{picture}(  6 ,  0 )(  2 , -4 )
% BOX 2 0 3 0
% 2 200 600 400 200
% 
\special{pn 8}%
\special{pa 200 600}%
\special{pa 400 600}%
\special{pa 400 200}%
\special{pa 200 200}%
\special{pa 200 600}%
\special{fp}%
% BOX 2 0 3 0
% 2 200 400 800 200
% 
\special{pn 8}%
\special{pa 200 400}%
\special{pa 800 400}%
\special{pa 800 200}%
\special{pa 200 200}%
\special{pa 200 400}%
\special{fp}%
% LINE 2 0 3 0
% 2 600 400 600 200
% 
\special{pn 8}%
\special{pa 600 400}%
\special{pa 600 200}%
\special{fp}%
% STR 2 0 3 0
% 3 300 200 300 300 5 0
% 1
\put(3.0000,-3.0000){\makebox(0,0){1}}%
% STR 2 0 3 0
% 3 500 200 500 300 5 0
% 1
\put(5.0000,-3.0000){\makebox(0,0){1}}%
% STR 2 0 3 0
% 3 700 200 700 300 5 0
% 2
\put(7.0000,-3.0000){\makebox(0,0){2}}%
% STR 2 0 3 0
% 3 300 400 300 500 5 0
% 3
\put(3.0000,-5.0000){\makebox(0,0){3}}%
\end{picture}% \,\,,\, T_2= %WinTpicVersion3.08
\unitlength 0.1in
\begin{picture}(  6 ,  4 )(  2 , -4)
% BOX 2 0 3 0
% 2 200 600 400 200
% 
\special{pn 8}%
\special{pa 200 600}%
\special{pa 400 600}%
\special{pa 400 200}%
\special{pa 200 200}%
\special{pa 200 600}%
\special{fp}%
% BOX 2 0 3 0
% 2 200 400 800 200
% 
\special{pn 8}%
\special{pa 200 400}%
\special{pa 800 400}%
\special{pa 800 200}%
\special{pa 200 200}%
\special{pa 200 400}%
\special{fp}%
% LINE 2 0 3 0
% 2 600 400 600 200
% 
\special{pn 8}%
\special{pa 600 400}%
\special{pa 600 200}%
\special{fp}%
% STR 2 0 3 0
% 3 300 200 300 300 5 0
% 1
\put(3.0000,-3.0000){\makebox(0,0){1}}%
% STR 2 0 3 0
% 3 500 200 500 300 5 0
% 1
\put(5.0000,-3.0000){\makebox(0,0){1}}%
% STR 2 0 3 0
% 3 700 200 700 300 5 0
% 2
\put(7.0000,-3.0000){\makebox(0,0){2}}%
% STR 2 0 3 0
% 3 300 400 300 500 5 0
% 4
\put(3.0000,-5.0000){\makebox(0,0){4}}%
\end{picture}% \,\,,\, T_3=%WinTpicVersion3.08
\unitlength 0.1in
\begin{picture}(  7 ,  6 )(  2, -4 )
% BOX 2 0 3 0
% 2 200 600 400 200
% 
\special{pn 8}%
\special{pa 200 600}%
\special{pa 400 600}%
\special{pa 400 200}%
\special{pa 200 200}%
\special{pa 200 600}%
\special{fp}%
% BOX 2 0 3 0
% 2 200 400 800 200
% 
\special{pn 8}%
\special{pa 200 400}%
\special{pa 800 400}%
\special{pa 800 200}%
\special{pa 200 200}%
\special{pa 200 400}%
\special{fp}%
% LINE 2 0 3 0
% 2 600 400 600 200
% 
\special{pn 8}%
\special{pa 600 400}%
\special{pa 600 200}%
\special{fp}%
% STR 2 0 3 0
% 3 300 200 300 300 5 0
% 1
\put(3.0000,-3.0000){\makebox(0,0){1}}%
% STR 2 0 3 0
% 3 700 200 700 300 5 0
% 2
\put(7.0000,-3.0000){\makebox(0,0){2}}%
% STR 2 0 3 0
% 3 500 200 500 300 5 0
% 2
\put(5.0000,-3.0000){\makebox(0,0){2}}%
% STR 2 0 3 0
% 3 300 400 300 500 5 0
% 2
\put(3.0000,-5.0000){\makebox(0,0){2}}%
\end{picture}%$,
   and $\boldsymbol{T}=(T_3,T_1,T_2,T_1)$.

\vspace{1em}

Then $word(T_1)=1123,\, word(T_2)=1124,\,word(T_3)=1222$.
Thus $T_1<T_2<T_3$.
\end{example}

%%%%%%%%%%%%%%%
\subsection{Major index and Foata map.}

\begin{definition}
Let $\Gamma$ be a totally ordered set.
Let $w=(w_1, \cdots ,w_n)$ be a n-tuple of elements in $\Gamma$.
A decent of $w$ is an integer $i$, $1 \leq i \leq n-1$, for which $w_i>w_{i+1}$.
A major index of $w$ is defined to be the sum of its descents, i.e.

\begin{equation*}
\mathrm{maj}(w)=\sum_{i, w_i>w_{i+1}} i .
\end{equation*}

\label{major}
\end{definition}

\begin{example}
In the above example, 
$\mathrm{maj}(\boldsymbol{T})=1+3=4$.
\end{example}

A \textit{word} $w=w_1 w_2 \cdots$ is a sequence of positive integers,
  and its \textit{weight} is the sequence $\mathrm{wt}(w)=(\nu_1,\nu_2,\ldots)$, where $\nu_k$ is the number of $w_i$ equal to $k$.
Let $\nu$ be a composition of $n$; namely $\nu=(\nu_1,\nu_2,\ldots,)$ is a sequence of non-negative integers summing to $n$.
Let $\mathrm{Word}(\nu)$ be the set of words with weight $\nu$.
There are two statistics on the set $\mathrm{Word}(\nu)$; inversion number inv and major index maj.
It is known that these two statistics have the same distribution on $\mathrm{Word}(\nu)$.

\begin{theorem}[cf.\cite{F},\cite{Lo}]
\par Let $\nu$ be a composition of $n$.
Then there exists a bijection $\Phi \colon \mathrm{Word}(\nu) \rightarrow \mathrm{Word}(\nu)$ satisfying

\begin{equation*}
\mathrm{inv}(\Phi (w)) = \mathrm{maj}(w)  ,\,\,\,  (w \in \mathrm{Word}(\nu)). 
\end{equation*}

\par In particular,
\begin{align*}
\sum_{w \in \mathrm{Word}(\nu)} q^{\mathrm{maj}(w)} &= \sum_{w \in \mathrm{Word}(\nu)} q^{\mathrm{inv}(w)} \\
  &= \biggl[ \begin{array}{c}
n \\ \nu
\end{array}
\biggr]_q.
\end{align*}
\label{Foata}
\end{theorem}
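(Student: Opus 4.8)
The statement to prove is the classical equidistribution of $\mathrm{maj}$ and $\mathrm{inv}$ on words of fixed weight (Foata's theorem), together with the identification of the common generating function as the $q$-multinomial coefficient.

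\begin{proof}[Proof sketch]
The plan is to construct the bijection $\Phi$ explicitly as the \emph{Foata second fundamental transformation}, proceeding by induction on the length of the word. Given a word $w = w_1 w_2 \cdots w_m$, I would build $\Phi(w)$ one letter at a time: set $\Phi(w_1) = w_1$, and having defined the word $v = \Phi(w_1 \cdots w_{k})$, I would append $w_{k+1}$ as follows. Compare $w_{k+1}$ with the last letter of $v$. If $w_{k+1} \geq$ (resp. $<$) that last letter, cut $v$ into blocks by scanning from the right, starting a new block after each letter that is $\leq w_{k+1}$ (resp. $> w_{k+1}$); then cyclically shift each block one step to the right; finally append $w_{k+1}$ at the end. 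The key lemma is that this insertion step increases $\mathrm{inv}$ by exactly the number of letters among $w_1,\dots,w_k$ that exceed $w_{k+1}$ when $w_{k+1}$ is a descent bottom in the growing prefix of $w$, matching precisely the increment of $\mathrm{maj}$ contributed by position $k$; one checks that appending a new largest-so-far letter commutes with the block structure in the right way. Running this to $k = m-1$ gives a word $\Phi(w)$ with $\mathrm{inv}(\Phi(w)) = \mathrm{maj}(w)$, and since every step preserves the multiset of letters, $\Phi(w) \in \mathrm{Word}(\nu)$.

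For bijectivity, I would describe the inverse algorithm: it removes the last letter $a = v_m$ of $v$, then undoes the block rotation by scanning and cutting $v_1 \cdots v_{m-1}$ into blocks according to whether each scanned letter compares to $a$, rotating each block one step to the left, and prepending $a$ read off correctly to recover $w_{k+1}$; one verifies that the forward and backward block-cutting rules are genuine inverses of each other (this is where the case distinction $w_{k+1} \geq v_m$ versus $w_{k+1} < v_m$ must be tracked carefully). Thus $\Phi$ is a bijection on $\mathrm{Word}(\nu)$ satisfying $\mathrm{inv}(\Phi(w)) = \mathrm{maj}(w)$.

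The ``in particular'' statement then follows by summing: $\sum_{w} q^{\mathrm{maj}(w)} = \sum_{w} q^{\mathrm{inv}(\Phi(w))} = \sum_{v} q^{\mathrm{inv}(v)}$, and the last sum is the $q$-multinomial coefficient $\bigl[\begin{smallmatrix} n \\ \nu \end{smallmatrix}\bigr]_q$. For this final identification I would either cite the standard fact, or give the short recursive proof: conditioning on the position of the rightmost occurrence of the largest letter present yields the Pascal-type recurrence for $q$-multinomials, with the power of $q$ counting the inversions that letter creates with everything to its right.

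The main obstacle is the inductive verification that the block-rotation insertion increments $\mathrm{inv}$ by exactly the right amount — i.e., matching it term-by-term with the descent at position $k$ of $w$. This requires a careful bookkeeping of how a cyclic shift within each block changes the number of inversions, and a clean argument that appending a letter larger than all previous letters does not disturb the accumulated count; everything else is routine once this lemma is in hand. Since this is a well-known theorem, citing \cite{F} and \cite{Lo} for the details of $\Phi$ is legitimate, but sketching the algorithm as above makes the paper self-contained.
\end{proof}
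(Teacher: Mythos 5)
The paper does not actually prove this statement; it cites \cite{F} and \cite{Lo} and only recalls that the bijection $\Phi$ (the Foata map) exists, so your decision to reconstruct Foata's second fundamental transformation is more detailed than what the paper offers, and the algorithm you describe (case split on whether $w_{k+1}$ is at least the last letter of the current image, cutting into blocks after each letter $\leq w_{k+1}$ resp. $> w_{k+1}$, rotating each block, then appending $w_{k+1}$) is indeed the standard construction. One invariant you use silently but correctly get for free is that the last letter of $\Phi(w_1\cdots w_k)$ is $w_k$ (since each step appends the new letter at the end), which is what makes your case split agree with the descent condition $w_k > w_{k+1}$.

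However, your key lemma is misstated, and as written the induction would fail. You claim the insertion step increases $\mathrm{inv}$ by ``the number of letters among $w_1,\dots,w_k$ that exceed $w_{k+1}$'' and that this matches the $\mathrm{maj}$ increment; but the $\mathrm{maj}$ increment at a descent is the \emph{position} $k$, not that count. For example, for $w=121$ the descent at position $2$ adds $2$ to $\mathrm{maj}$, while only one of $w_1,w_2$ exceeds $w_3$; and in the non-descent case the net change of $\mathrm{inv}$ must be $0$, not $\#\{i\le k : w_i>w_{k+1}\}$. The correct bookkeeping is: appending $a=w_{k+1}$ always creates $\#\{i\le k: w_i>a\}$ new inversions; when $w_k>a$ each block ends in a letter $>a$ whose rotation to the front of its block jumps over letters $\leq a$ only, adding $\#\{i\le k: w_i\le a\}$ inversions, so the total increment is exactly $k$; when $w_k\le a$ each block ends in a letter $\le a$ whose rotation removes exactly one inversion per letter $>a$ in its block, cancelling the appended inversions for a net change of $0$. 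With the lemma repaired in this form your induction closes, and the final identification $\sum_w q^{\mathrm{inv}(w)}=\bigl[\begin{smallmatrix} n \\ \nu\end{smallmatrix}\bigr]_q$ by the Pascal-type recurrence (or by citation) is fine; alternatively, simply citing \cite{F},\cite{Lo} as the paper does is entirely legitimate here.
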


We can construct the bijection $\Phi$ explicitly, and call it Foata map(see \cite{Lo}).

\subsection{Kostka coefficients and inverse Kostka coefficients}

Finally, we recall the Kostka coefficients and the inverse Kostka coefficients.
Let $s_\lambda$, and $m_\lambda,$ denote the Schur function, and the monomial symmetric function
corresponding to $\lambda$, respectively.

\begin{definition}
For $\lambda,\mu \vdash n$, the Kostka coefficient $K_{\lambda,\mu}$ is defined by
$$ s_\lambda = \sum_{\mu \vdash n}K_{\lambda,\mu}m_\mu.$$
\par Similarly the inverse Kostka coefficient $K_{\lambda,\mu}^{-1}$ is given by
$$ m_\lambda = \sum_{\mu \vdash n}K_{\lambda,\mu}^{-1}s_\mu.$$
\label{inverseKostka}
\end{definition}

%%%%%%%%%%%%%%%%%%%%%%%
%
% LLT polynomials 
%
%%%%%%%%%%%%%%%%%%%%%%%

\section{LLT polynomials and a q-analog of Littlewood-Richardson coefficients}

\subsection{LLT polynomials} 

Let $n$ be a positive integer and $N$ be a sufficiently large integer.
Let $$ \boldsymbol{\mu} = (\mu^{(0)},\mu^{(1)},\cdots,\mu^{(n-1)}) $$
be an $n$-tuple of Young diagrams.
For a $\boldsymbol{T}=(T_0 , T_1 ,\cdots,T_{n-1}) \in \mathrm{SSTab}(\boldsymbol{\mu})$, 
two entries $T^{(i)}(u) > T^{(j)}(v)$ form an \textit{Inversion} if either
\begin{align*}
(\mathrm{i} ). \,\, i<j  \text{ and } c(u)=c(v), \text{ or} \\
(\mathrm{ii}). \,\, i>j  \text{ and } c(u)=c(v)-1 .
\end{align*}
Denote by Inv$(\boldsymbol{T})$ the number of inversions in $\boldsymbol{T}$.

\begin{example}
\rm{Let} 
$\boldsymbol{T} = \Bigl( \,\, %WinTpicVersion3.08
\unitlength 0.1in
\begin{picture}(  4,  0)(  2, -4.5 )
% BOX 2 0 3 0
% 2 200 600 600 200
% 
\special{pn 8}%
\special{pa 200 600}%
\special{pa 600 600}%
\special{pa 600 200}%
\special{pa 200 200}%
\special{pa 200 600}%
\special{fp}%
% LINE 2 0 3 0
% 2 200 400 600 400
% 
\special{pn 8}%
\special{pa 200 400}%
\special{pa 600 400}%
\special{fp}%
% LINE 2 0 3 0
% 2 400 600 400 200
% 
\special{pn 8}%
\special{pa 400 600}%
\special{pa 400 200}%
\special{fp}%
% STR 2 0 3 0
% 3 300 210 300 310 5 0
% 2
\put(3.0000,-3.1000){\makebox(0,0){2}}%
% STR 2 0 3 0
% 3 300 410 300 510 5 0
% 4
\put(3.0000,-5.1000){\makebox(0,0){4}}%
% STR 2 0 3 0
% 3 490 410 490 510 5 0
% 4
\put(4.9000,-5.1000){\makebox(0,0){4}}%
% STR 2 0 3 0
% 3 480 210 480 310 5 0
% 2
\put(4.8000,-3.1000){\makebox(0,0){2}}%
\end{picture}% \,\,,\,\, %WinTpicVersion3.08
\unitlength 0.1in
\begin{picture}(  4,  0 )(  2 , -4.5)
% BOX 2 0 3 0
% 2 200 600 600 200
% 
\special{pn 8}%
\special{pa 200 600}%
\special{pa 600 600}%
\special{pa 600 200}%
\special{pa 200 200}%
\special{pa 200 600}%
\special{fp}%
% LINE 2 0 3 0
% 2 400 600 400 200
% 
\special{pn 8}%
\special{pa 400 600}%
\special{pa 400 200}%
\special{fp}%
% LINE 2 0 3 0
% 2 200 400 600 400
% 
\special{pn 8}%
\special{pa 200 400}%
\special{pa 600 400}%
\special{fp}%
% STR 2 0 3 0
% 3 300 200 300 300 5 0
% 1
\put(3.0000,-3.0000){\makebox(0,0){1}}%
% STR 2 0 3 0
% 3 500 200 500 300 5 0
% 1
\put(5.0000,-3.0000){\makebox(0,0){1}}%
% STR 2 0 3 0
% 3 300 400 300 500 5 0
% 2
\put(3.0000,-5.0000){\makebox(0,0){2}}%
% STR 2 0 3 0
% 3 500 400 500 500 5 0
% 3
\put(5.0000,-5.0000){\makebox(0,0){3}}%
\end{picture}% \,\, \Bigr) \in \mathrm{SSTab}((2\,2),(2\,2)).$
\vspace{1em}

Then $\boldsymbol{T}$ has $5$ inversions of type $($i$)$ and $1$ inversion of type $($ii$)$.
Therefore Inv$(\boldsymbol{T})=6$.
\end{example}

\begin{definition}$($\cite{HH1}$)$
\par
The \textit{LLT polynomial} indexed by $\boldsymbol{\mu}$ is
$$ G_{\boldsymbol{\mu}}(x;q) = 
      \sum_{\boldsymbol{T} \in \mathrm{SSTab}(\boldsymbol{\mu})} q^{\mathrm{Inv}(\boldsymbol{T})} x^{\boldsymbol{T}} .$$
Also, we denote $G_{\boldsymbol{\mu},\nu}(q)$ by the coefficient of 
$x^{\nu}=x_1^{\nu_1}x_2^{\nu_2}\cdots$ in the LLT polynomial $G_{\boldsymbol{\mu}}(x;q)$, and call it LLT coefficient.
\end{definition}

\begin{example}
\rm{Let} $\boldsymbol{\mu}=((2),(2))$ and $\nu=(2\,2)$.
$\mathrm{SSTab}(\boldsymbol{\mu},\nu)$ consists of $3$ tableaux; 
$$ \boldsymbol{T}_1= %WinTpicVersion3.08
\unitlength 0.1in
\begin{picture}( 12 , -1 )(  2.55 , -3.5 )
% BOX 2 0 3 0
% 2 400 400 800 200
% 
\special{pn 8}%
\special{pa 400 400}%
\special{pa 800 400}%
\special{pa 800 200}%
\special{pa 400 200}%
\special{pa 400 400}%
\special{fp}%
% LINE 2 0 3 0
% 2 600 400 600 200
% 
\special{pn 8}%
\special{pa 600 400}%
\special{pa 600 200}%
\special{fp}%
% BOX 2 0 3 0
% 2 1000 400 1400 200
% 
\special{pn 8}%
\special{pa 1000 400}%
\special{pa 1400 400}%
\special{pa 1400 200}%
\special{pa 1000 200}%
\special{pa 1000 400}%
\special{fp}%
% LINE 2 0 3 0
% 2 1200 400 1200 200
% 
\special{pn 8}%
\special{pa 1200 400}%
\special{pa 1200 200}%
\special{fp}%
% STR 2 0 3 0
% 3 300 200 300 300 5 0
% (
\put(3.0000,-3.0000){\makebox(0,0){(}}%
% STR 2 0 3 0
% 3 1500 200 1500 300 5 0
% )
\put(15.0000,-3.0000){\makebox(0,0){)}}%
% STR 2 0 3 0
% 3 900 230 900 330 5 0
% ,
\put(9.0000,-3.3000){\makebox(0,0){,}}%
% STR 2 0 3 0
% 3 500 200 500 300 5 0
% 1
\put(5.0000,-3.0000){\makebox(0,0){1}}%
% STR 2 0 3 0
% 3 700 200 700 300 5 0
% 1
\put(7.0000,-3.0000){\makebox(0,0){1}}%
% STR 2 0 3 0
% 3 1100 200 1100 300 5 0
% 2
\put(11.0000,-3.0000){\makebox(0,0){2}}%
% STR 2 0 3 0
% 3 1300 200 1300 300 5 0
% 2
\put(13.0000,-3.0000){\makebox(0,0){2}}%
\end{picture}% \,\,\,,\, \boldsymbol{T}_2= %WinTpicVersion3.08
\unitlength 0.1in
\begin{picture}( 12 , -1 )(  2.55 , -3.5 )
% BOX 2 0 3 0
% 2 400 400 800 200
% 
\special{pn 8}%
\special{pa 400 400}%
\special{pa 800 400}%
\special{pa 800 200}%
\special{pa 400 200}%
\special{pa 400 400}%
\special{fp}%
% LINE 2 0 3 0
% 2 600 400 600 200
% 
\special{pn 8}%
\special{pa 600 400}%
\special{pa 600 200}%
\special{fp}%
% BOX 2 0 3 0
% 2 1000 400 1400 200
% 
\special{pn 8}%
\special{pa 1000 400}%
\special{pa 1400 400}%
\special{pa 1400 200}%
\special{pa 1000 200}%
\special{pa 1000 400}%
\special{fp}%
% LINE 2 0 3 0
% 2 1200 400 1200 200
% 
\special{pn 8}%
\special{pa 1200 400}%
\special{pa 1200 200}%
\special{fp}%
% STR 2 0 3 0
% 3 300 200 300 300 5 0
% (
\put(3.0000,-3.0000){\makebox(0,0){(}}%
% STR 2 0 3 0
% 3 1500 200 1500 300 5 0
% )
\put(15.0000,-3.0000){\makebox(0,0){)}}%
% STR 2 0 3 0
% 3 900 230 900 330 5 0
% ,
\put(9.0000,-3.3000){\makebox(0,0){,}}%
% STR 2 0 3 0
% 3 500 200 500 300 5 0
% 1
\put(5.0000,-3.0000){\makebox(0,0){1}}%
% STR 2 0 3 0
% 3 1300 200 1300 300 5 0
% 2
\put(13.0000,-3.0000){\makebox(0,0){2}}%
% STR 2 0 3 0
% 3 700 200 700 300 5 0
% 2
\put(7.0000,-3.0000){\makebox(0,0){2}}%
% STR 2 0 3 0
% 3 1100 200 1100 300 5 0
% 1
\put(11.0000,-3.0000){\makebox(0,0){1}}%
\end{picture}% \,\,\,,\, 
   \boldsymbol{T}_3= %WinTpicVersion3.08
\unitlength 0.1in
\begin{picture}( 12 , -1 )(  2.55 , -3.5 )
% BOX 2 0 3 0
% 2 400 400 800 200
% 
\special{pn 8}%
\special{pa 400 400}%
\special{pa 800 400}%
\special{pa 800 200}%
\special{pa 400 200}%
\special{pa 400 400}%
\special{fp}%
% LINE 2 0 3 0
% 2 600 400 600 200
% 
\special{pn 8}%
\special{pa 600 400}%
\special{pa 600 200}%
\special{fp}%
% BOX 2 0 3 0
% 2 1000 400 1400 200
% 
\special{pn 8}%
\special{pa 1000 400}%
\special{pa 1400 400}%
\special{pa 1400 200}%
\special{pa 1000 200}%
\special{pa 1000 400}%
\special{fp}%
% LINE 2 0 3 0
% 2 1200 400 1200 200
% 
\special{pn 8}%
\special{pa 1200 400}%
\special{pa 1200 200}%
\special{fp}%
% STR 2 0 3 0
% 3 300 200 300 300 5 0
% (
\put(3.0000,-3.0000){\makebox(0,0){(}}%
% STR 2 0 3 0
% 3 1500 200 1500 300 5 0
% )
\put(15.0000,-3.0000){\makebox(0,0){)}}%
% STR 2 0 3 0
% 3 900 230 900 330 5 0
% ,
\put(9.0000,-3.3000){\makebox(0,0){,}}%
% STR 2 0 3 0
% 3 700 200 700 300 5 0
% 2
\put(7.0000,-3.0000){\makebox(0,0){2}}%
% STR 2 0 3 0
% 3 1100 200 1100 300 5 0
% 1
\put(11.0000,-3.0000){\makebox(0,0){1}}%
% STR 2 0 3 0
% 3 500 200 500 300 5 0
% 2
\put(5.0000,-3.0000){\makebox(0,0){2}}%
% STR 2 0 3 0
% 3 1300 200 1300 300 5 0
% 1
\put(13.0000,-3.0000){\makebox(0,0){1}}%
\end{picture}% \,\,.$$
The Inversion numbers are Inv$(\boldsymbol{T}_1)=1$, Inv$(\boldsymbol{T}_2)=0$, and Inv$(\boldsymbol{T}_3)=2$, respectively.
So, 
$$ G_{\boldsymbol{\mu},\nu}(q) = 1+q+q^2 .$$
\end{example}

\begin{theorem}$($\cite[Theorem.6.1.]{LLT}, \cite[Appendix]{HH2}$)$
\par
The LLT polynomial $G_{\boldsymbol{\mu}}(x;q)$ is symmetric in the variables $x$.
\label{symmetry}
\end{theorem}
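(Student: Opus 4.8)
The plan is to deduce the symmetry from invariance under the simple transpositions $s_k=(k,k+1)$, since these generate the symmetric group acting on the variables $x$, and then to exhibit an explicit weight-rearranging bijection on tuples of semistandard tableaux that preserves the statistic $\mathrm{Inv}$. Concretely, it suffices to prove $G_{\boldsymbol{\mu},\nu}(q)=G_{\boldsymbol{\mu},s_k\nu}(q)$ for every composition $\nu$ and every $k\ge 1$, where $s_k\nu$ interchanges $\nu_k$ and $\nu_{k+1}$; equivalently, to construct a bijection $\varphi_k\colon \mathrm{SSTab}(\boldsymbol{\mu},\nu)\to \mathrm{SSTab}(\boldsymbol{\mu},s_k\nu)$ with $\mathrm{Inv}(\varphi_k(\boldsymbol{T}))=\mathrm{Inv}(\boldsymbol{T})$.

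For $\varphi_k$ I would use a componentwise Bender--Knuth involution. Given $\boldsymbol{T}=(T_0,\dots,T_{n-1})$, in each component $T_i$ call a cell \textit{frozen} if it contains $k$ and sits directly above a cell containing $k+1$, or contains $k+1$ and sits directly below a cell containing $k$; by column-strictness the frozen cells occur in vertical $k/(k{+}1)$ pairs. In each row of each $T_i$ the non-frozen cells with entry in $\{k,k+1\}$ form a contiguous block $k^{a}(k{+}1)^{b}$, which I replace by $k^{b}(k{+}1)^{a}$. The classical argument shows the result is again semistandard, has weight $s_k\nu$, and that $\varphi_k$ is an involution; moreover only the non-frozen cells with entry $k$ or $k+1$ change value.

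It then remains to check $\mathrm{Inv}(\varphi_k(\boldsymbol{T}))=\mathrm{Inv}(\boldsymbol{T})$, which is the heart of the argument. A pair of cells $u\in T_i$, $v\in T_j$ can change its inversion status only if both of its entries lie in $\{k,k+1\}$ and at least one of them is non-frozen: indeed, if the other cell carries a value $\le k-1$ or $\ge k+2$, then replacing $k$ by $k+1$ (or vice versa) does not alter which of the two entries is larger. Hence I would reduce to controlling, for each content value $c$, the cells with entry in $\{k,k+1\}$ lying on diagonal $c$ (relevant to type (i) inversions) and the interaction of diagonal $c$ with diagonal $c+1$ (relevant to type (ii) inversions), and show that the total number of ordered pairs picked out by conditions (i) and (ii) among these cells is invariant under the flip. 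This amounts to the statement that, after reading the $k$'s and $(k{+}1)$'s on the appropriate pair of diagonals as a word in a fixed order governed by component index and content, the Bender--Knuth rearrangement preserves the number of inversions of that word of the specific shape $\bigl((k{+}1)\text{ before }k\bigr)$ selected by (i) and (ii). Establishing this cancellation diagonal by diagonal --- keeping frozen cells pinned while the free $k$/$(k{+}1)$ counts swap --- is the main obstacle; the remaining verifications are the routine Bender--Knuth bookkeeping.

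Finally, I note that this combinatorial route is only one option: the original proof of \cite{LLT} derives the symmetry from the realization of $G_{\boldsymbol{\mu}}(x;q)$, up to a power of $q$, inside the level-one Fock space representation of $U_q(\widehat{sl_n})$, where it arises from a product of ribbon-creation operators applied to the vacuum and the symmetry reflects the commutation of these operators with the natural $\mathfrak{gl}_\infty$-action; the appendix of \cite{HH2} gives a self-contained combinatorial version along the lines above.
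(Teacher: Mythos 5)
The theorem you are proving is not proved in the paper at all: it is quoted from \cite{LLT} and the appendix of \cite{HH2}, and your closing paragraph describes those sources accurately. The combinatorial argument you actually propose, however, has a genuine gap, and it sits exactly at the step you defer as ``the main obstacle'': the componentwise Bender--Knuth involution does not preserve $\mathrm{Inv}$. Take $\boldsymbol{\mu}=((1),(1))$ and $k=1$, and let $\boldsymbol{T}=(T_0,T_1)$ where $T_0$ is the single cell filled with $1$ and $T_1$ is the single cell filled with $2$. Both cells are unfrozen, so your $\varphi_1$ flips the letters in each component separately and gives $\boldsymbol{T}'=(2,1)$. But $\mathrm{Inv}(\boldsymbol{T})=0$ (the unique pair $2>1$ has the larger entry in component $i=1$ and the smaller in $j=0$ with equal contents, so neither condition (i) nor (ii) holds), whereas $\mathrm{Inv}(\boldsymbol{T}')=1$ (now $i=0<j=1$ with equal contents, a type (i) Inversion). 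So the quantity you claim is invariant diagonal by diagonal --- the number of inversion pairs among the letters $k,k+1$ --- already fails to be invariant in the smallest possible example.

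The failure is structural, not an accident of bookkeeping: for a pair consisting of one $k$ and one $k{+}1$ lying in different components on the same diagonal (or on adjacent diagonals), exactly one of the two ways of distributing the two letters between those components is an Inversion, and which one it is depends on the order of the component indices; flipping the letters independently in each component therefore toggles such pairs on and off rather than preserving their number, and no count of free versus frozen letters alone restores the balance. A correct combinatorial proof has to move the letters $k$ and $k{+}1$ in a way that couples the components --- compare Proposition \ref{labyrinth} of Carr\'e--Leclerc used in this paper, where even the map that merely reverses the order of two equal-shape components must modify their entries nontrivially --- and this coupling is in effect what the proof in the appendix of \cite{HH2} supplies, while \cite{LLT} obtain the symmetry from the Fock space realization. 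As written, your reduction to pairs of letters in $\{k,k+1\}$ is fine, but the central invariance claim is false for the map you chose, so the proof does not go through without replacing the involution itself.
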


$\bf{Remark}$.
Lascoux, Leclerc, and Thibon originally defined LLT polynomials by using combinatorial objects, 
  \textit{ribbon tableaux} and their \textit{spins}\cite{LLT}.
The relationship between $G_{\boldsymbol{\mu}}(x;q)$ and the original polynomial $L_{\widetilde{\mu},\nu}(q)$ is as follows.
Let $\widetilde{\mu}$ be the Young diagram whose $n$-quotient is $\boldsymbol{\mu}$, 
  $Tab_n(\widetilde{\mu})$ be the set of $n$-ribbon tableaux of shape $\widetilde{\mu}$, 
  and $L_{\widetilde{\mu},\nu}(q) \coloneqq \sum_{T \in Tab_n(\widetilde{\mu})} q^{\mathrm{spin}(T)} , \text{ \cite[eq(39)]{L}}.$

Then 
$$G_{\boldsymbol{\mu},\nu}(q^2)=q^{b_{\widetilde{\mu}}}L_{\widetilde{\mu},\nu}(q^{-1}),$$
where $b_{\widetilde{\mu}}=\mathrm{max} \{ \mathrm{spin}(T)|T \in Tab_n(\widetilde{\mu}) \}$ is the maximal spin of
  the set of $n$-ribbon tableaux of shape $\widetilde{\mu}$.
See \cite{L} and \cite[$\S 5$]{HH1} for more details.

%%%%%%%%%%%%%%%%%%%%%%%
%
% a q-analog of Littlewood-Richardson coefficients
%
%%%%%%%%%%%%%%%%%%%%%%%

\subsection{a $q$-analog of Littlewood-Richardson coefficients}

In this section we review a $q$-analog of Littlewood-Richardson coefficients\cite{LLT}.
Again, let $$ \boldsymbol{\mu} = (\mu^{(0)},\mu^{(1)},\cdots,\mu^{(n-1)}) $$
be an $n$-tuple of Young diagrams.

\begin{definition}[a $q$-analog of Littlewood-Richardson coefficients]
\par
Define a $q$-analog of Littlewood-Richardson coefficients by
$$ G_{\boldsymbol{\mu},\nu}(x;q) \coloneqq
  \sum_{\nu \vdash n} \widetilde{LR}_{\mu^{(0)},\cdots,\mu^{(n-1)}}^{\nu}(q) s_\nu(x) \,\,, $$
where $s_\nu(x)$ is the Schur function corresponding to the partition $\nu$.
\label{q-LR}
\end{definition}

Note that
$$ \widetilde{LR}_{\mu^{(0)},\cdots,\mu^{(n-1)}}^{\nu}(q) =
  \sum_{\rho \vdash n}K_{\rho,\nu}^{-1}G_{\boldsymbol{\mu},\rho}(q) \,\,, $$
where $K_{\rho,\nu}^{-1}$ is the inverse Kostka coefficient defined in Definition.\ref{inverseKostka}.

From the definition,
$G_{\boldsymbol{\mu}}(x;1)$ is the character of the $GL_N$-module 
$V_{\mu^{(0)}} \otimes \cdots \otimes V_{\mu^{(n-1)}}$,
where $V_{\lambda}$ denotes the irreducible representation of the general linear group $GL_N$
   corresponding to $\lambda$.
So, we can consider $ \widetilde{LR}_{\mu^{(0)},\cdots,\mu^{(n-1)}}^{\nu}(q)$
   as a $q$-analog of the Littlewood-Richardson coefficients (they are the tensor product multiplicities).

\begin{theorem}$($\cite{L}$)$
\par
$$\widetilde{LR}_{\mu^{(0)},\cdots,\mu^{(n-1)}}^{\nu}(q) \in \mathbb{Z}_{\geq 0}[q]. $$
\label{positivity}
\end{theorem}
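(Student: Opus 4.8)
The plan is to derive the positivity of $\widetilde{LR}_{\mu^{(0)},\ldots,\mu^{(n-1)}}^{\nu}(q)$ from the known Schur-positivity of LLT polynomials, which is itself a deep consequence of the geometry of the Fock space representation of $U_q(\widehat{\mathfrak{sl}}_n)$ and the positivity of parabolic Kazhdan--Lusztig polynomials (see Leclerc--Thibon \cite{LT} and the exposition in \cite{L}). Concretely, the strategy is: first translate the statement into the language of ribbon tableaux and spin via the dictionary recalled in the Remark following Theorem \ref{symmetry}; then invoke the statement, due to Leclerc--Thibon and proved via the canonical basis of the Fock space, that the matrix expressing the $q$-Littlewood--Richardson coefficients in terms of Kazhdan--Lusztig polynomials has nonnegative entries; and finally conclude that the composition giving $\widetilde{LR}$ has nonnegative coefficients as a polynomial in $q$.

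In more detail, the first step is to recall that $G_{\boldsymbol{\mu}}(x;q)$, being symmetric by Theorem \ref{symmetry}, expands as $\sum_{\nu \vdash n}\widetilde{LR}^{\nu}_{\mu^{(0)},\ldots,\mu^{(n-1)}}(q)\,s_\nu(x)$ by Definition \ref{q-LR}, and that by the Remark we have $G_{\boldsymbol{\mu},\nu}(q^2) = q^{b_{\widetilde{\mu}}}L_{\widetilde{\mu},\nu}(q^{-1})$, where $L_{\widetilde{\mu},\nu}(q) = \sum_{T \in Tab_n(\widetilde{\mu})} q^{\mathrm{spin}(T)}$ is the original LLT ribbon-tableau generating function. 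Thus the coefficient of $s_\nu(x)$ in $G_{\boldsymbol{\mu}}(x;q)$ is, up to the substitution $q \mapsto q^2$ and the monomial prefactor $q^{b_{\widetilde{\mu}}}$, the corresponding coefficient in the Schur expansion of the LLT polynomial $\sum_\nu L_{\widetilde{\mu},\nu}(q)m_\nu(x)$ studied in \cite{LLT}. The second step is the substantive input: Leclerc--Thibon \cite{LT}, using Varagnolo--Vasserot's identification of the LLT generating function with a transition coefficient of the canonical (Kazhdan--Lusztig-type) basis of the level-$n$ Fock space, showed that the Schur expansion of this polynomial has coefficients in $\mathbb{Z}_{\geq 0}[q]$; equivalently, these coefficients are (specializations of) parabolic Kazhdan--Lusztig polynomials of affine type $A$, whose nonnegativity follows from Kazhdan--Lusztig theory. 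The third step is purely bookkeeping: since $b_{\widetilde{\mu}} \geq 0$ and the substitution $q \mapsto q^2$ sends $\mathbb{Z}_{\geq 0}[q]$ into itself, multiplying by $q^{b_{\widetilde{\mu}}}$ and reversing the substitution keeps the coefficients nonnegative, so $\widetilde{LR}^{\nu}_{\mu^{(0)},\ldots,\mu^{(n-1)}}(q) \in \mathbb{Z}_{\geq 0}[q]$.

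The main obstacle is that the positivity is not elementary: it genuinely rests on the identification of LLT polynomials with decomposition-type data for the Fock space and on the positivity of affine parabolic Kazhdan--Lusztig polynomials, which we cannot reprove here. In this paper we therefore quote it as a black box from \cite{L} (equivalently \cite{LT}), and the only thing that needs care in the write-up is making sure the normalization conventions match: one must check that the $q^{b_{\widetilde{\mu}}}L_{\widetilde{\mu},\nu}(q^{-1})$ reparametrization of \cite{L} does not introduce negative powers or sign changes when passing to the Schur basis, i.e. that the unitriangular change of basis between $\{m_\nu\}$ and $\{s_\nu\}$ (governed by inverse Kostka numbers, cf. the displayed identity $\widetilde{LR}^\nu = \sum_\rho K^{-1}_{\rho,\nu}G_{\boldsymbol{\mu},\rho}(q)$ following Definition \ref{q-LR}) is compatible with the positivity claimed at the level of the $q$-deformed Fock space. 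Granting the conventions align, the statement follows immediately.
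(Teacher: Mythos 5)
Your proposal takes essentially the same route as the paper: the paper does not reprove positivity but quotes it from \cite{L} (ultimately \cite{LT} and the canonical basis of the Fock space of $U_q(\hat{sl}_n)$ together with Kazhdan--Lusztig positivity), recording only the normalization dictionary $e_{n\nu,\widetilde{\mu}}(q)=q^{b_{\widetilde{\mu}}}\widetilde{LR}_{\mu^{(0)},\cdots,\mu^{(n-1)}}^{\nu}(q^{-2})$, which is exactly the black-box-plus-bookkeeping structure of your argument. Your step matching the spin/inversion conventions is the same check the paper's remark after Theorem \ref{symmetry} encodes, so the proposal is fine as written.
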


$\bf{Remark}$.
Theorem.\ref{positivity} is deduced from the deep connection of LLT polynomials to the Fock space representation of
    the quantum affine algebra $U_q(\hat{sl}_n)$ and Kazhdan-Lusztig polynomials\cite{L}.
These $q$-Littlewood-Richardson coefficients $\widetilde{LR}_{\mu^{(0)},\cdots,\mu^{(n-1)}}^{\nu}(q)$ appear as
 coefficients of grobal basis of the Fock space representation of $U_q(\hat{sl}_n)$.

In the notation of \cite{L} and the remark after Theorem.\ref{symmetry}, we obtain the followings
$($See \cite[eq(41)]{L}$)$.
$$ e_{n \nu,\widetilde{\mu}}(q)=q^{b_{\widetilde{\mu}}}
    \widetilde{LR}_{\mu^{(0)},\cdots,\mu^{(n-1)}}^{\nu}(q^{-2}) .$$

From Theorem.\ref{positivity} we can consider Inv as a statistics on each irreducible component of
  the $GL_N$-module $V_{\mu^{(0)}} \otimes \cdots \otimes V_{\mu^{(n-1)}}$.

%%%%%%%%%%%%%%%%%%%%%%%
%
% q-multinomial expansion of LLT coefficients
%
%%%%%%%%%%%%%%%%%%%%%%%

\section{$q$-multinomial expansion of LLT coefficients}

We prove a $q$-multinomial expansion of LLT polynomials in this section.

Throughout this section, we consider the case where $\mu^{(0)}=\mu^{(1)} = \cdots = \mu^{(n-1)} = \mu $, i.e.
\begin{equation*}
\boldsymbol{\mu} = \underbrace{(\mu,\cdots,\mu)}_{n} .
\end{equation*}

\begin{proposition}
Let $\boldsymbol{\mu}=(\underbrace{\mu,\cdots,\mu})$ and $\nu$ be a partition of $n |\mu|$.
Then there exists a bijection
  $\mathcal{F} \colon \mathrm{SSTab}(\boldsymbol{\mu},\nu) \rightarrow \mathrm{SSTab}(\boldsymbol{\mu},\nu)$ satisfying
 \begin{equation}
 \mathrm{Inv}(\boldsymbol{T}) = \mathrm{Inv}(T_0',\ldots,T_{n-1}') + \mathrm{inv}(\mathcal{F}(\boldsymbol{T})) ,
 \end{equation}
where $T_0' \leq \ldots \leq T_{n-1}'$ is the rearrangement of the parts of $\mathcal{F}(\boldsymbol{T})$, and
   $\mathrm{inv}(\mathcal{F}(\boldsymbol{T}))$ is the inversion number with respect to
   the total order on $\mathrm{SSTab}(\mu)($see Definition.\ref{totalorder}.$)$
\label{keylem}
\end{proposition}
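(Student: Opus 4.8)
The plan is to peel the statement down to a fact about words over the totally ordered alphabet $\mathrm{SSTab}(\mu)$ and then to feed in the equidistribution of Theorem~\ref{Foata}. The first observation is that the Inversion number is a sum of pairwise contributions: since each of the clauses (i), (ii) defining an Inversion forces $i\neq j$, every Inversion of $\boldsymbol{T}$ is formed by entries in two \emph{distinct} components, so if for an ordered pair $(S,S')$ of tableaux of shape $\mu$ we set
$$ \iota(S,S') = \#\{(u,v):\, S(u)>S'(v),\ c(u)=c(v)\} + \#\{(u,v):\, S'(v)>S(u),\ c(u)=c(v)+1\}$$
(the number of Inversions a copy of $S$ in an earlier slot makes with a copy of $S'$ in a later slot), then $\mathrm{Inv}(\boldsymbol{T})=\sum_{0\le i<j\le n-1}\iota(T_i,T_j)$. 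The same bookkeeping gives, for the increasing rearrangement of any tuple $\boldsymbol{U}$, the value $\sum_{i<j}\iota\bigl(\min(U_i,U_j),\max(U_i,U_j)\bigr)=:\widetilde{\mathrm{Inv}}(\boldsymbol{U})$, which depends only on the multiset underlying $\boldsymbol{U}$, while $\mathrm{inv}(\boldsymbol{U})=\sum_{i<j}[\,U_i>U_j\,]$. Thus the Proposition asserts the existence of a bijection $\mathcal{F}$ of $\mathrm{SSTab}(\boldsymbol{\mu},\nu)$ with $\mathrm{Inv}(\boldsymbol{T})=\widetilde{\mathrm{Inv}}(\mathcal{F}(\boldsymbol{T}))+\mathrm{inv}(\mathcal{F}(\boldsymbol{T}))$ for all $\boldsymbol{T}$.

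Next I would note that, since $\mathrm{SSTab}(\boldsymbol{\mu},\nu)$ is finite, such a bijection exists precisely when the statistics $\boldsymbol{T}\mapsto\mathrm{Inv}(\boldsymbol{T})$ and $\boldsymbol{U}\mapsto\widetilde{\mathrm{Inv}}(\boldsymbol{U})+\mathrm{inv}(\boldsymbol{U})$ are equidistributed; and, grouping $\sum_{\boldsymbol{U}}q^{\widetilde{\mathrm{Inv}}(\boldsymbol{U})+\mathrm{inv}(\boldsymbol{U})}$ by multiset and applying Theorem~\ref{Foata} (inversions on the rearrangements of a fixed content are $q$-multinomially distributed), this equidistribution is exactly the identity $\sum_{\boldsymbol{S}}q^{\mathrm{Inv}(\boldsymbol{S})}\left[\begin{smallmatrix} n \\ \rho \end{smallmatrix}\right]_q$ over increasing tuples $\boldsymbol{S}$, i.e.\ Theorem~\ref{fermionicformula}. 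So the real content is this identity, and since $\mathcal{F}$ is needed for the proof of Theorem~\ref{fermionicformula} it should be produced by an explicit construction rather than by the abstract counting remark just made. A warning about what $\mathcal{F}$ must do: it cannot be multiset-preserving. Already for $\mu=(2)$, $n=2$ the two orderings of the multiset $\{(1\,2),(2\,3)\}$ have $\iota$-values, hence Inversion numbers, $0$ and $2$, whereas $q^{0}\left[\begin{smallmatrix} 2 \\ 1,1 \end{smallmatrix}\right]_q=1+q$; so $\mathcal{F}$ is forced to shuttle tuples between different multisets of shape-$\mu$ tableaux, and this is the feature the construction must handle.

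To construct $\mathcal{F}$, first note why $\mathcal{F}=\mathrm{id}$ fails: $\mathrm{Inv}(\boldsymbol{T})-\widetilde{\mathrm{Inv}}(\boldsymbol{T})$ is a sum of integer \emph{weights} $\iota(T_i,T_j)-\iota(T_j,T_i)$ attached to the inversions $T_i>T_j$ ($i<j$) of $\boldsymbol{T}$, and these weights are not identically $1$ (for $\mu=(3)$ one finds pairs $S<S'$ with $\iota(S',S)=\iota(S,S')$). The task of $\mathcal{F}$ is to turn this weighted inversion count into the plain one, and I would carry it out by an insertion procedure in the spirit of the Foata map: build $\mathcal{F}(\boldsymbol{T})$ by inserting the components $T_0,T_1,\dots$ one at a time, maintaining at each stage the invariant ``current Inversion number $=\widetilde{\mathrm{Inv}}$(current increasing rearrangement)$+$current $\mathrm{inv}$,'' and allowing each insertion both to choose a slot and, when necessary, to replace a short block of already-placed components by another block of the same total weight so as to absorb the discrepancy between the weight of the newly created inversions and their number. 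The main obstacle I anticipate is the bookkeeping of these block-replacements: altering any components changes the Inversions they form with \emph{every} other component, and one must show — here it should be essential that all components share the single shape $\mu$, so that corresponding content diagonals line up across components — that these changes telescope, leaving the invariant intact after each step, and that the procedure is reversible (by exhibiting the matching extraction step). This local-to-global bookkeeping is the technical heart of the argument; once it is in place, Theorem~\ref{fermionicformula} follows by summing over multisets and invoking Theorem~\ref{Foata}.
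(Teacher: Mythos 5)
Your reduction of the statement to an equidistribution of $\mathrm{Inv}$ against $\widetilde{\mathrm{Inv}}+\mathrm{inv}$, your observation that $\mathcal{F}$ cannot preserve the underlying multiset of shape-$\mu$ tableaux, and your remark that the swap-weights $\iota(T_i,T_j)-\iota(T_j,T_i)$ need not equal $1$ are all correct, and they identify exactly the obstruction the paper has to deal with. But the proposal stops there: the actual bijection is never constructed. Your plan — an insertion procedure that may ``replace a short block of already-placed components by another block of the same total weight so as to absorb the discrepancy'' — names no replacement rule, gives no reason such a block always exists, does not control how the replacement interacts with the Inversions formed with all the other components, and does not exhibit the inverse map. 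You acknowledge this is ``the technical heart of the argument,'' so what is submitted is a correct framing plus an unproven program, not a proof of the Proposition.

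The missing idea, which the paper imports rather than reinvents, is the Carr\'e--Leclerc involution of Proposition~\ref{labyrinth}: a weight-preserving involution $\phi$ on $\mathrm{SSTab}(\mu)^2$ that reverses the relative order of the pair in the total order of Definition~\ref{totalorder} and changes $\mathrm{Inv}$ by \emph{exactly} $\pm 1$ (this is precisely the exact discrepancy control your block-replacement step would need, supplied pairwise instead of blockwise; note also that ``same total weight'' is not enough — the pair's weight must be preserved exactly so that one stays inside $\mathrm{SSTab}(\boldsymbol{\mu},\nu)$). The paper then sorts $\boldsymbol{T}$ into an increasing tuple $\boldsymbol{T}'$ by applying $\phi$ to adjacent pairs in the canonical order prescribed by the operation $\circledast$, refined cell by cell, and defines $\mathcal{F}\boldsymbol{T}$ by applying the corresponding plain adjacent transpositions in reverse to $\boldsymbol{T}'$; the identity $\mathrm{Inv}(\boldsymbol{T})=\mathrm{Inv}(\boldsymbol{T}')+\mathrm{inv}(\mathcal{F}\boldsymbol{T})$ falls out because each $\phi$-step lowers $\mathrm{Inv}$ by one and each transposition raises $\mathrm{inv}$ by one. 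Even granting such a $\phi$, bijectivity is not automatic: the paper needs Lemma~\ref{lem-bijection}, proved by induction on $|\mu|$, to show that the $\phi$-sorting of $\boldsymbol{T}$ and the plain sorting of $\mathcal{F}\boldsymbol{T}$ use the same transposition sequence, so that the analogously defined map $\mathcal{G}$ inverts $\mathcal{F}$. Your proposal contains no substitute for either ingredient, so as it stands it does not establish the Proposition (nor, therefore, Theorem~\ref{fermionicformula}, which in the paper is deduced from it together with Theorem~\ref{Foata}).
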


\begin{example}
\rm{Let} $\mu=(3)$, $n=5$, $\boldsymbol{T}=(233,222,112,134,133)$,
  then $\mathcal{F}\boldsymbol{T}=(234,233,112,133,122)$ (see Example.\ref{exampleF} below).
Hence $\boldsymbol{T}'=(112,122,133,233,234)$.
Since Inv$(\boldsymbol{T})=16$, Inv$(\boldsymbol{T}')=8$, and inv$(\mathcal{F}\boldsymbol{T})=8$,
  we have $\mathrm{Inv}(\boldsymbol{T}) = \mathrm{Inv}(\boldsymbol{T}') + \mathrm{inv}(\mathcal{F}\boldsymbol{T}).$
\end{example}

\vspace{1em}

In particular, from Theorem.\ref{Foata} we obtain a $q$-multinomial expansion of LLT coefficients.

\begin{theorem}[$q$-multinomial expansion of LLT coefficients]
Let $\boldsymbol{\mu}=(\underbrace{\mu,\ldots,\mu}_n)$.
Then
$$ G_{\boldsymbol{\mu},\nu}(q)=\sum_{(T_0 \leq \cdots \leq T_{n-1}) \in \mathrm{SSTab}(\boldsymbol{\mu},\nu)}
    q^{\mathrm{Inv}(T_0 \leq \cdots \leq T_{n-1})} \biggl[ \begin{array}{c} n \\ \rho \end{array} \biggr]_q,  $$
where $\rho$ is the composition which determined from $T_0 \leq \cdots \leq T_{n-1}$;
  i.e. $T_0=\cdots=T_{\rho_1-1} < T_{\rho_1}=\cdots=T_{\rho_1+\rho_2-1} < T_{\rho_1+\rho_2}= \cdots$.
\label{fermionicformula}
\end{theorem}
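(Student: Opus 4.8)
The plan is to derive Theorem~\ref{fermionicformula} directly from Proposition~\ref{keylem} together with the Foata-type equidistribution in Theorem~\ref{Foata}. First I would reorganize the defining sum for $G_{\boldsymbol{\mu},\nu}(q)$ according to the fibers of the bijection $\mathcal{F}$: since $\mathcal{F}$ is a bijection of $\mathrm{SSTab}(\boldsymbol{\mu},\nu)$ onto itself, we may replace the summation variable $\boldsymbol{T}$ by $\mathcal{F}(\boldsymbol{T})$ and use $\mathrm{Inv}(\boldsymbol{T}) = \mathrm{Inv}(T_0'\le\cdots\le T_{n-1}') + \mathrm{inv}(\mathcal{F}(\boldsymbol{T}))$, where $T_0'\le\cdots\le T_{n-1}'$ is the sorted rearrangement of $\mathcal{F}(\boldsymbol{T})$. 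The point is that $\mathrm{Inv}$ of the sorted tuple depends only on the multiset of tableaux appearing, i.e.\ only on the sorted tuple itself, and is unchanged under the rearrangement.

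The key step is then to group the terms of the rewritten sum by the underlying sorted tuple. Fix an increasing tuple $T_0\le\cdots\le T_{n-1}$ in $\mathrm{SSTab}(\boldsymbol{\mu},\nu)$, with associated composition $\rho$ recording the multiplicities of the distinct tableaux. The tuples $\boldsymbol{U}\in\mathrm{SSTab}(\boldsymbol{\mu},\nu)$ whose sorted rearrangement is exactly $T_0\le\cdots\le T_{n-1}$ are in bijection with the distinct rearrangements of the word on $n$ letters in which the $k$-th distinct tableau value appears $\rho_k$ times; under the total order of Definition~\ref{totalorder} this word-set is precisely $\mathrm{Word}(\rho)$ (after relabeling the distinct tableau values $1<2<\cdots$ in increasing order), and $\mathrm{inv}(\boldsymbol{U})$ as defined via that total order corresponds to $\mathrm{inv}$ of the associated word. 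Hence, by the second display of Theorem~\ref{Foata},
$$ \sum_{\boldsymbol{U}} q^{\mathrm{inv}(\boldsymbol{U})} = \sum_{w\in\mathrm{Word}(\rho)} q^{\mathrm{inv}(w)} = \biggl[\begin{array}{c} n \\ \rho \end{array}\biggr]_q, $$
where $\boldsymbol{U}$ runs over the fiber over $T_0\le\cdots\le T_{n-1}$. Summing the factored expression $q^{\mathrm{Inv}(T_0\le\cdots\le T_{n-1})}\cdot q^{\mathrm{inv}(\boldsymbol{U})}$ first over each fiber and then over all increasing tuples yields exactly the claimed formula.

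I would present this as three short lemmas-within-the-proof: (a) $\mathrm{Inv}$ of a tuple equals $\mathrm{Inv}$ of its sorted rearrangement plus $\mathrm{inv}$ with respect to the total order — this is essentially a restatement combining Proposition~\ref{keylem} with the observation that $\mathrm{Inv}$ is invariant under permuting equal tableaux, but one should check carefully that $\mathrm{Inv}$ of the \emph{sorted} tuple really is a function of the multiset alone; (b) the fiber of the sorting map over a fixed increasing tuple with multiplicity composition $\rho$ is in $\mathrm{inv}$-preserving bijection with $\mathrm{Word}(\rho)$; (c) assemble via Theorem~\ref{Foata}. The main obstacle is step (a)/(b): one must verify that the total order on $\mathrm{SSTab}(\mu)$ interacts correctly with the reduction in Proposition~\ref{keylem}, specifically that when two tableaux $T_i'=T_j'$ are equal they contribute no inversions among themselves (clear from the definition of a descent/inversion on a totally ordered alphabet, since it requires a strict inequality), so that the $\mathrm{inv}$ statistic genuinely descends to the word of distinct-value indices and $\big[\begin{smallmatrix} n\\ \rho\end{smallmatrix}\big]_q$ is the right generating function. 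Everything else is bookkeeping: reindexing the sum by $\mathcal{F}$ and splitting it along fibers.
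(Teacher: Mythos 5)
Your main argument (your first two paragraphs) is correct and is essentially the paper's own route: the paper obtains Theorem~\ref{fermionicformula} as an immediate consequence of Proposition~\ref{keylem} and Theorem~\ref{Foata}, by exactly this reindexing of the sum along the bijection $\mathcal{F}$ and grouping of $\mathrm{SSTab}(\boldsymbol{\mu},\nu)$ into fibers of the sorting map, each fiber being in an $\mathrm{inv}$-preserving bijection with $\mathrm{Word}(\rho)$.

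One caution about your ``lemma (a)'': as literally stated, namely $\mathrm{Inv}(\boldsymbol{T})=\mathrm{Inv}(\mathrm{sort}(\boldsymbol{T}))+\mathrm{inv}(\boldsymbol{T})$ for the \emph{same} tuple $\boldsymbol{T}$, it is false and is not a restatement of Proposition~\ref{keylem}. In the example before Example~\ref{exampleF}, $\boldsymbol{T}=(233,222,112,134,133)$ has $\mathrm{Inv}(\boldsymbol{T})=16$ and $\mathrm{inv}(\boldsymbol{T})=8$, while the sorted rearrangement of $\boldsymbol{T}$ itself, $(112,133,134,222,233)$, has $\mathrm{Inv}=11\neq 16-8$. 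The point is that the increasing tuple appearing in Proposition~\ref{keylem} is the rearrangement of $\mathcal{F}(\boldsymbol{T})$, whose parts form a \emph{different} multiset of tableaux from those of $\boldsymbol{T}$ (the Carr\'e--Leclerc involution $\phi$ modifies entries, it does not merely swap tableaux); this is precisely why the nontrivial bijection $\mathcal{F}$ is needed. So in writing up, keep the change of variables $\boldsymbol{U}=\mathcal{F}(\boldsymbol{T})$ exactly as in your first paragraph and do not replace it by the naive sorted-tuple identity; with that phrasing fixed, steps (b) and (c) are fine as you describe them.
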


\begin{example}
\rm{Let} $n=3$, $\mu=(2)$ and $\nu=(3 \, 2 \, 1)$.
Then
$$\{ \boldsymbol{T} \in \mathrm{SSTab}(\boldsymbol{\mu},\nu) \,|\, T_0 \leq T_1 \leq T_2 \} = \{ (11,12,23),(11,13,22),(12,12,13) \}.$$
Thus,
$$ G_{\boldsymbol{\mu},\nu}(q) =
   q   \biggl[ \begin{array}{c} 3 \\ 1,1,1 \end{array} \biggr]_q
 + q^2 \biggl[ \begin{array}{c} 3 \\ 1,1,1 \end{array} \biggr]_q
 +     \biggl[ \begin{array}{c} 3 \\ 2,1 \end{array} \biggr]_q  . $$
\end{example}

\vspace{1em}

We use the following proposition which gives an answer in the case of $n=2$.

\begin{proposition}
\cite[Lemma.8.5. and its Proof]{CL}
There is an involution $\phi \colon \mathrm{SSTab}(\mu)^2 \rightarrow \mathrm{SSTab}(\mu)^2$ such that
$$ \mathrm{Inv}(\boldsymbol{T})=
 \begin{cases}
   \mathrm{Inv}(\boldsymbol{T})+1 & \text{if $T_0 < T_1$}, \\
   \mathrm{Inv}(\boldsymbol{T}) & \text{if $T_0 = T_1$}, \\
   \mathrm{Inv}(\boldsymbol{T})-1 & \text{if $T_0 > T_1$},
 \end{cases} $$
where $\boldsymbol{T}=(T_0,T_1) \in \mathrm{SSTab}(\mu)^2$.
\label{labyrinth}
\end{proposition}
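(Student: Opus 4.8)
The assertion is that there is a weight‑preserving involution $\phi$ on $\mathrm{SSTab}(\mu)^2$ for which $\mathrm{Inv}(\phi(\boldsymbol{T}))$ equals $\mathrm{Inv}(\boldsymbol{T})+1$, $\mathrm{Inv}(\boldsymbol{T})$, or $\mathrm{Inv}(\boldsymbol{T})-1$ according as $T_0<T_1$, $T_0=T_1$, or $T_0>T_1$ in the order of Definition~\ref{totalorder} (so the left‑hand side of the displayed formula should be $\mathrm{Inv}(\phi(\boldsymbol{T}))$). Since any such $\phi$ must carry $\{\boldsymbol{T}:T_0<T_1\}$ bijectively onto $\{\boldsymbol{T}:T_0>T_1\}$ and stabilize the diagonal $\{(A,A):A\in\mathrm{SSTab}(\mu)\}$, the plan is to declare $\phi$ the identity on the diagonal (where nothing is asked of $\mathrm{Inv}$) and, on a pair with $T_0\neq T_1$, to perform one \emph{canonical local move} that raises or lowers $\mathrm{Inv}$ by exactly one in the forced direction. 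One should note that merely interchanging $T_0$ and $T_1$ does not work: $\mathrm{Inv}(T_1,T_0)-\mathrm{Inv}(T_0,T_1)$ can be $\pm2$ (already for $\mu=(2,1)$), so the move must genuinely alter the fillings, not just their order.

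First I would pass to the superimposed picture. Assigning to a cell $u$ of the $i$‑th component ($i=0,1$) the adjusted content $\widehat{c}(u)=2c(u)+i$, the $2|\mu|$ cells of $\boldsymbol{T}$ fill the integer points of an interval, each point with a multiplicity fixed by $\mu$; straight from the definition one checks that $\mathrm{Inv}(\boldsymbol{T})=\sum_{v}\#\{(a,b):\widehat{c}(a)=v,\ \widehat{c}(b)=v+1,\ \boldsymbol{T}(a)>\boldsymbol{T}(b)\}$, so that $\mathrm{Inv}$ becomes a ``descents across consecutive diagonals'' statistic, and the comparison of $T_0$ and $T_1$ is read off from the subwords sitting on even versus odd adjusted contents. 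In this guise the sought move is a localized \emph{uncrossing}: locate a distinguished content class --- roughly, where the two tableaux first overtake one another --- and push a single cell past its neighbour there, which flips exactly one of the descents and preserves shape and weight. Equivalently, through the $2$‑quotient bijection of $\mathrm{SSTab}(\mu)^2$ with the set of $2$‑ribbon tableaux of $\widetilde{\mu}$ (the shape with empty $2$‑core and $2$‑quotient $(\mu,\mu)$), under which $\mathrm{Inv}$ is an affine function of the spin (cf.\ the Remark after Theorem~\ref{symmetry}), the uncrossing is a domino move at a canonical site, and the combinatorial device that locates that site and controls the spin change is exactly Carr\'e--Leclerc's \emph{labyrinth} of a domino tableau --- whence the name of the proposition. (If one prefers, first standardize, which preserves $\mathrm{Inv}$ and makes the labyrinth the standard one, then undo the standardization.)

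The bulk of the proof --- and the main obstacle --- is everything asserted in the previous paragraph: that the distinguished content class (the ``open corner'' of the labyrinth) exists and is unique; that the uncrossing there returns a genuine pair of semistandard tableaux of shape $\mu$ of the same weight; that it changes $\mathrm{Inv}$ by exactly $1$, not more, with the sign dictated by how $T_0$ and $T_1$ compare; and that the labyrinth of the output has the same distinguished class, so that applying the move twice is the identity. Each of these is a finite but delicate local analysis of the two or three content classes that the move disturbs, weighed against the semistandardness inequalities of the adjacent cells, and carrying it out is precisely the content of \cite[Lemma~8.5 and its proof]{CL}. The specialization $q=1$, where both sides collapse to $s_\mu(x)^2$, is a convenient sanity check but is far weaker than the graded statement.
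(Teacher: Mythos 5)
Your proposal is correct in substance and takes essentially the same route as the paper: the paper gives no independent argument for this proposition, attributing it entirely to Carr\'e--Leclerc (Lemma~8.5 and its proof in \cite{CL}), and you likewise reduce the statement (after rightly correcting the left-hand side to $\mathrm{Inv}(\phi(\boldsymbol{T}))$) to that lemma, adding only an outline of the domino/labyrinth mechanism behind it, consistent with the paper's remarks that $\phi$ is a local modification rather than a mere swap. Nothing further is needed for comparison, since the hard local analysis is deferred to \cite{CL} in both cases.
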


$\bf{Remarks}$.
$\bf{1}$. $\phi$ modifies some part in $T_1$ and $T_2$ including the first cell $u \in \mu$ such that $T_0(u) \not= T_1(u)$(\cite{CL}).
In particular, if the length of $\nu$ is equal to $2$ then $\phi(T_0,T_1)=(T_1,T_0)$.

\begin{center}
\hspace{3em}
\input{2008fig13.tex}
\end{center}

\vspace{1em}

$\bf{2}$. Let $\phi(T_0,T_1)=(\widetilde{T_0},\widetilde{T_1})$.
Then $$
T_0 < T_1 \, \Longrightarrow \widetilde{T_0} > \widetilde{T_1} \,\,, \text{ and } \,\,
T_0 > T_1 \, \Longrightarrow \widetilde{T_0} < \widetilde{T_1} .$$

$\bf{3}$. $\phi$ preserves the weight of $\boldsymbol{T}$.

\vspace{1em}

Now we define an operation $\circledast$ to construct $\mathcal{F}$.
This operation just specifies an order of adjacent transpositions to get a new word $\widetilde{w}$.
(See also Example.\ref{exam-operation*} below.)

$\bf{Operation}$ $\boldsymbol{\circledast}$

Let $w=w_0 \cdots w_{n-1}$ be a word 
 and $v_1 < v_2 < \cdots < v_r$ be the letters which is appears $w$.

(i). Set $j_1=\mathrm{min} \{j \,|\, w_j=v_1 \,\}$.
We move $w_{j_1}$ to the first position by adjacent transpositions $s_i=(i-1 ,\, i)$.

(ii). Set $j_2=\mathrm{min} \{j>j_1 \,|\, w_j=v_1 \,\}$,
        and we move $w_{j_2}$ to the second position by adjacent transpositions and so on.
Then $w$ is transformed into $w(v_1) = v_1 \cdots v_1 w'$.

(iii). Apply (i) and (ii) in case $v_2$ and $w'$.
Then $w$ is transformed into $w(v_2) = v_1 \cdots v_1 v_2 \cdots v_2 w''$.

(iv). Repeat (iii) to get a new word $\widetilde{w}=\widetilde{w_0}\cdots\widetilde{w_{n-1}}$.
     (Thus $\widetilde{w_0} \leq \cdots \leq \widetilde{w_{n-1}}$.)

\begin{example}
\rm{Let} $w=213312$.($v_1=1,v_2=2$ and $v_3=3$)
Then 
\begin{align*}
w=213312 & \overset{s_1}{\rightarrow} 123312 \\
         & \overset{s_4}{\rightarrow}\overset{s_3}{\rightarrow}\overset{s_2}{\rightarrow} 112332 \\
         & \overset{s_5}{\rightarrow}\overset{s_4}{\rightarrow} 112233 = \widetilde{w}.
\end{align*}
\label{exam-operation*}
\end{example}

\vspace{1em}

Now we construct the bijection $\mathcal{F}$ to prove Proposition\ref{keylem}.

Let $\boldsymbol{T}=(T_0,\ldots,T_{n-1}) \in \mathrm{SSTab}(\mu)^n$.
We define $\phi_i(\boldsymbol{T})$ as follows;
$$ \phi_i(T) \coloneqq (T_0,\cdots,T_{i-2},\phi (T_{i-1},T_i),T_{i+1},\cdots,T_{n-1}). $$

$\bf{Definition}$ $\bf{of}$ $\boldsymbol{\mathcal{F}}$.
(See also Example.\ref{exampleF} below.)

Let $\boldsymbol{T}=(T_0,\ldots,T_{n-1}) \in \mathrm{SSTab}(\mu)^n$.

(i). Set $w(1,1)=T_1(1,1) \cdots T_n(1,1)$.
If $\widetilde{w(1,1)}=s_{i_k} \cdots s_{i_1} w(1,1)$ in the operation $\circledast$,
  then we denote $\boldsymbol{T}^{(1,1)}=\boldsymbol{\phi_{i_k} \cdots \phi_{i_1}} \boldsymbol{T}$.
(Note that $T^{(1,1)}_1(1,1) \leq \cdots \leq T^{(1,1)}_n(1,1).$)

(ii). Let $X_e= \{ T^{(1,1)}_j \,|\,T^{(1,1)}_j(1,1)=e \} $.
We repeat (i) on each $X_e$ with respect to the next cell ($(1,2)$ or $(2,1)$), 
 and we get $\boldsymbol{T}^{(1,2)}$(or $\boldsymbol{T}^{(2,1)}$).
(Note that we use the reading described before Definition.\ref{totalorder}.)

(iii). Repeat (ii) to the next cell.
i.e. Suppose we have $\boldsymbol{T}^{(v)}$ for a cell $v$.
Let $v'$ be the next cell and $\lambda(v)$ denotes the Young diagram which is consisted of the cells lying before $v'$. 
For a semistandard tableaux $e \in \mathrm{SSTab}(\lambda(v))$, 
 let $X_e= \{ T^{(v)}_j \,|\,T^{(v)}_j(a,b)=e(a,b) \text{ , } (a,b) \in \lambda(v) \}$.
We repeat (i) on each $X_e$ with respect to the cell $v'$, and we get $\boldsymbol{T}^{(v')}$.
(Note that we don't change the numbers which belong to $\lambda(v)$.)

(iv). Repeat (iii) until the last cell $u$, say $\boldsymbol{T}'=(T_0',\ldots,T_{n-1}')=\boldsymbol{T}^{(u)}$.
(Note that $T_0' \leq \cdots \leq T_{n-1}'.$)

(v). If $\boldsymbol{T}'=\boldsymbol{\phi_{i_r} \cdots \phi_{i_1}}\boldsymbol{T}$,
  then we set $\mathcal{F}\boldsymbol{T}=\boldsymbol{s_{i_1} \cdots s_{i_r}}\boldsymbol{T}'$.

\vspace{1em}

In the above situation we write
\begin{equation}
\boldsymbol{T}'= \phi_{i_r} \cdots \phi_{i_1} \boldsymbol{T} \hspace{1em} \text{in Def.$\mathcal{F}$}.
\end{equation}
For a cell $v \in \mu$, if $\boldsymbol{T}^{(v)}=\phi_{i_k} \cdots \phi_{i_1} \boldsymbol{T}$ in the definition of $\mathcal{F}$, 
we write
\begin{equation}
\boldsymbol{T}^{(v)}= \phi_{i_k} \cdots \phi_{i_1} \boldsymbol{T} \hspace{1em} \text{in Def.$\mathcal{F}$}.
\end{equation}

\begin{example}
\rm{Let} $\mu=(3)$, $n=5$, and $\boldsymbol{T}=(233,222,112,134,133)$.
Then
\begin{align*}
 \boldsymbol{T}=(233,222,112,134,133)
  & \overset{\phi_2}{\rightarrow} (233,112,222,134,133) \\
  & \overset{\phi_1}{\rightarrow} (112,233,222,134,133) \\
  & \overset{\phi_3}{\rightarrow} (112,233,122,234,133) \\
  & \overset{\phi_2}{\rightarrow} (112,133,222,234,133) \\
  & \overset{\phi_4}{\rightarrow} (112,133,222,134,233) \\
  & \overset{\phi_3}{\rightarrow}
      (\underbrace{112,133,122}_{X_1},\underbrace{234,233}_{X_2}) =\boldsymbol{T}^{(1,1)} \\
  & \overset{\phi_2}{\rightarrow}(112,122,133,\underbrace{234,233}_{X_{23}}) =\boldsymbol{T}^{(1,2)} \\
  & \overset{\phi_4}{\rightarrow}(112,122,133,233,234) =\boldsymbol{T}^{(1,3)}=\boldsymbol{T}'.
\end{align*}
Thus 
$\boldsymbol{T}^{(1,1)} = \phi_3 \phi_4 \phi_2 \phi_3 \phi_1 \phi_2 \boldsymbol{T}$ in Def.$\mathcal{F}$,
$\boldsymbol{T}^{(1,2)} = \phi_2 \phi_3 \phi_4 \phi_2 \phi_3 \phi_1 \phi_2 \boldsymbol{T}$ in Def.$\mathcal{F}$,
$\boldsymbol{T}' = \phi_4 \phi_2 \phi_3 \phi_4 \phi_2 \phi_3 \phi_1 \phi_2 \boldsymbol{T}$ in Def.$\mathcal{F}$, and
\begin{equation*}
\mathcal{F}\boldsymbol{T}=s_2s_1s_3s_2s_4s_3s_2s_4\boldsymbol{T}'=(234,233,112,133,122).
\end{equation*}
\label{exampleF}
\end{example}

Next we define the inverse of $\mathcal{F}$.

$\bf{Definition}$ $\bf{of}$ $\boldsymbol{\mathcal{G}}$.
(See also Example.\ref{exampleG} below.)

In the definition of $\mathcal{F}$, we exchange the role of $\boldsymbol{s_i}$'s(resp. $\boldsymbol{\phi_i}$'s) and
   $\boldsymbol{\phi_i}$'s(resp. $\boldsymbol{s_i}$'s).
More precisely, let $\boldsymbol{U} \in \mathrm{SSTab}(\boldsymbol{\mu})$.

(i). Set $w(1,1)=U_0(1,1) \cdots U_{n-1}(1,1)$.
If $\widetilde{w(1,1)}=s_{i_k} \cdots s_{i_1} w(1,1)$ in the operation $\circledast$,
  then we denote $\dot{\boldsymbol{U}}^{(1,1)}=\boldsymbol{s_{i_k} \cdots s_{i_1}} \boldsymbol{U}$.
(Note that $\dot{U}^{(1,1)}_0(1,1) \leq \cdots \leq \dot{U}^{(1,1)}_{n-1}(1,1).$)

(ii). Let $Y_e= \{ \dot{U}^{(1,1)}_j \,|\,\dot{U}^{(1,1)}_j(1,1)=e \} $.
We repeat (i) on each $Y_e$ with respect to the next cell ($(1,2)$ or $(2,1)$),
  and we get $\dot{\boldsymbol{U}}^{(1,2)}$(or $\dot{\boldsymbol{U}}^{(2,1)}$).

(iii). Repeat (ii) to the next cell.
i.e. Suppose we have $\dot{\boldsymbol{U}}^{(v)}$ for a cell $v$.
Let $v'$ be the next cell and $\lambda(v)$ denotes the Young diagram which is consisted of the cells lying before $v'$.
For a semistandard tableaux $e \in \mathrm{SSTab}(\lambda(v))$,
 let $Y_e= \{ U^{(v)}_j \,|\,U^{(v)}_j(a,b)=e(a,b) \text{ , } (a,b) \in \lambda(v) \}$.
We repeat (i) on each $Y_e$ with respect to the cell $v'$, and we get $\dot{\boldsymbol{U}}^{(v')}$.
(Note that we don't change the numbers which belong to $\lambda(v)$.)

(iv). Repeat (iii) until the last cell $u$, say $\dot{\boldsymbol{U}'}=(\dot{U_0'},\ldots,\dot{U_{n-1}'})=\dot{\boldsymbol{U}}^{(u)}$.
(Note that $\dot{\boldsymbol{U}'}$ is only the rearrangement of $\boldsymbol{U}$ in order.)

(v). If $\dot{\boldsymbol{U}'}=\boldsymbol{s_{i_r} \cdots s_{i_1}}\boldsymbol{U}$,
  then we set $\mathcal{G}\boldsymbol{U}=\boldsymbol{\phi_{i_1} \cdots \phi_{i_r}}\dot{\boldsymbol{U}'}$.

\vspace{1em}

In the above situation we write
\begin{equation}
\dot{\boldsymbol{U}}'= s_{i_r} \cdots s_{i_1} \boldsymbol{U} \hspace{1em} \text{in Def.$\mathcal{G}$}.
\end{equation}
For a cell $v \in \mu$, if $\dot{\boldsymbol{U}}^{(v)}=s_{i_k} \cdots s_{i_1} \boldsymbol{U}$ in the definition of $\mathcal{G}$,
we write
\begin{equation}
\dot{\boldsymbol{U}}^{(v)}= s_{i_k} \cdots s_{i_1} \boldsymbol{U} \hspace{1em} \text{in Def.$\mathcal{G}$}.
\end{equation}

\begin{example}
\rm{Let} $\mu=(3)$, $n=5$, and $U=(234,233,112,133,122)$.
Then
\begin{align*}
 \boldsymbol{U}=(234,233,112,133,122)
  & \overset{s_2}{\rightarrow}\overset{s_1}{\rightarrow}\overset{s_3}{\rightarrow}\overset{s_2}{\rightarrow}
    \overset{s_4}{\rightarrow}\overset{s_3}{\rightarrow}
      (\underbrace{112,133,122}_{Y_1},\underbrace{234,233}_{Y_2}) = \dot{\boldsymbol{U}}^{(1,1)}  \\
  & \overset{s_2}{\rightarrow}(112,122,133,\underbrace{234,233}_{Y_{23}}) =\dot{\boldsymbol{U}}^{(1,2)} \\
  & \overset{s_4}{\rightarrow}(112,122,133,233,234) = \dot{\boldsymbol{U}}^{(1,3)} = \dot{\boldsymbol{U}'}.
\end{align*}

Thus 
$\dot{\boldsymbol{U}}^{(1,1)} = s_3 s_4 s_2 s_3 s_1 s_2 \boldsymbol{U}$ in Def.$\mathcal{G}$,
$\dot{\boldsymbol{U}}^{(1,2)} = s_2 s_3 s_4 s_2 s_3 s_1 s_2 \boldsymbol{U}'$ in Def.$\mathcal{G}$,
$\dot{\boldsymbol{U}}' = s_4 s_2 s_3 s_4 s_2 s_3 s_1 s_2 \boldsymbol{U}'$ in Def.$\mathcal{G}$, and
\begin{equation*}
\mathcal{G}\boldsymbol{T} = \phi_2 \phi_1 \phi_3 \phi_2 \phi_4 \phi_3 \phi_2 \phi_4 \dot{\boldsymbol{U}}'=(233,222,112,134,133).
\end{equation*}
\label{exampleG}
\end{example}

\begin{lemma}
Let $\boldsymbol{T} \in \mathrm{SSTab}(\boldsymbol{\mu})$ and 
$\boldsymbol{T}'=(T_0'\leq \cdots \leq T_{n-1}')$ be the rearrangement of the parts of $\mathcal{F}\boldsymbol{T}$.
If 
\begin{equation}
\boldsymbol{T}'= \phi_{i_r} \cdots \phi_{i_1} \boldsymbol{T} \hspace{1em} \text{in Def.$\mathcal{F}$},
\label{eq.1}
\end{equation}
then
\begin{equation}
\boldsymbol{T}'= s_{i_r} \cdots s_{i_1} (\mathcal{F}\boldsymbol{T}) \hspace{1em} \text{in Def.$\mathcal{G}$}.
\label{eq.2}
\end{equation}
\label{lem-bijection}
\end{lemma}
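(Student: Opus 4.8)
The plan is to show that $\mathcal G$, applied to $\mathcal F\boldsymbol T$, records exactly the same index sequence $(i_1,\dots,i_r)$, in the same order, as $\mathcal F$ records on $\boldsymbol T$, and reaches the same sorted tuple $\boldsymbol T'$. One half is immediate: $\mathcal F\boldsymbol T=s_{i_1}\cdots s_{i_r}\boldsymbol T'$ is obtained from $\boldsymbol T'$ by honest transpositions of its parts, so $\mathcal F\boldsymbol T$ has the same multiset of parts as $\boldsymbol T'$; since $\boldsymbol T'$ is already increasing, the increasing rearrangement $\dot{\boldsymbol U}'$ of $\mathcal F\boldsymbol T$ is again $\boldsymbol T'$. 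So the real content of \eqref{eq.2} is that the $\mathcal G$-run on $\mathcal F\boldsymbol T$ records $i_1,\dots,i_r$ in that order.

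First I would run $\mathcal F$ on $\boldsymbol T$ and $\mathcal G$ on $\mathcal F\boldsymbol T$ in parallel, cell by cell in the reading order $v_0=(1,1),v_1,v_2,\dots$, and prove by induction on the number of cells processed that after cell $v_t$: (i) the intermediate tuple $\boldsymbol T^{(v_t)}$ of $\mathcal F$ equals the intermediate tuple $\dot{\boldsymbol U}^{(v_t)}$ of $\mathcal G$, and (ii) the partial index sequences recorded so far agree. Granting (i) at stage $t-1$, the two algorithms face the same block decomposition and, inside each block, the same word of $v_t$-entries, hence they run the same Operation $\circledast$ at cell $v_t$; this gives (ii) at stage $t$. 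For (i) at stage $t$ one must check that performing the resulting $\circledast$-sequence with the operators $\phi_j$ (as $\mathcal F$ does) on $\boldsymbol T^{(v_{t-1})}$ has the same net effect as performing it with the plain transpositions $s_j$ (as $\mathcal G$ does) on $\dot{\boldsymbol U}^{(v_{t-1})}=\boldsymbol T^{(v_{t-1})}$. Here the structure of $\phi$ enters: every pair touched by a $\phi_j$ inside a block agrees on all previously-processed cells, so by the Remarks following Proposition~\ref{labyrinth} (and the explicit ``labyrinth'' description in \cite{CL}) $\phi$ fixes those cells, keeps the cellwise entry-multiset of the pair, and reverses the pair's order; hence inside the block it rearranges the $v_t$-entries exactly as the $s_j$'s do, and one shows that in this ``agreeing on a nonempty initial segment of cells'' situation the whole $\circledast$-sequence of $\phi$'s is literally the corresponding product of transpositions.

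The delicate point is the base of the induction, the first cell $v_0=(1,1)$, where $\mathcal F\boldsymbol T$ and $\boldsymbol T$ are genuinely different tuples. The key observation is that they nonetheless share the word of $(1,1)$-entries: writing $\mathcal F\boldsymbol T=S_0^{-1}S_1^{-1}\cdots S_L^{-1}\boldsymbol T'$, where $S_s$ is the transposition-word form of the $\circledast$-sequence used by $\mathcal F$ at cell $v_s$ and $v_L=u$ is the last cell, each of $S_1,\dots,S_L$ is a product of transpositions confined to $(1,1)$-blocks and so fixes the $(1,1)$-word, while $S_0$ sorts the $(1,1)$-word of $\boldsymbol T$, so $S_0^{-1}$ carries the sorted $(1,1)$-word of $\boldsymbol T'$ back to that of $\boldsymbol T$. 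Hence $\mathcal G$ uses the same $\circledast$-sequence at $v_0$ (that is (ii) at stage $0$), and after cancelling $S_0$ the assertion (i) at stage $0$ reduces to the identity $\Phi_1^{-1}\cdots\Phi_L^{-1}\boldsymbol T'=S_1^{-1}\cdots S_L^{-1}\boldsymbol T'$ (with $\Phi_s$ the $\phi$-operator form of the cell-$v_s$ sequence), which is once more an instance of the ``$\phi$-sequence $=$ transposition product inside a block'' statement. With (i) and (ii) established for the last cell $u$, we read off $\dot{\boldsymbol U}'=\boldsymbol T^{(u)}=\boldsymbol T'$ and that $\mathcal G$ recorded $(i_1,\dots,i_r)$, which is precisely \eqref{eq.2}.

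The main obstacle is exactly the step of the second paragraph: controlling the interplay between the modifications $\phi$ makes to not-yet-processed cells and the plain rearrangements on the $\mathcal G$-side, i.e.\ proving that inside a block — where the relevant tableaux share an initial segment of cells in the reading order — the composite of the $\circledast$-sequence of $\phi$'s coincides with the composite of the matching transpositions. The subtlety is that individual $\phi_j$'s need not be plain swaps (they may alter entries in cells weakly after $v_t$), so a step-by-step comparison is unavailable; one has to follow the whole block through the sort and use the construction of $\phi$ from \cite[Lemma~8.5]{CL}. Granting that, closing the induction is routine bookkeeping, and the lemma is exactly what feeds the subsequent verification that $\mathcal G$ inverts $\mathcal F$.
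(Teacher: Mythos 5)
You have read the statement correctly: since $\mathcal{F}\boldsymbol{T}$ is a position permutation of $\boldsymbol{T}'$ and $\boldsymbol{T}'$ is already weakly increasing, the sorted tuple produced by $\mathcal{G}$ is indeed $\boldsymbol{T}'$, and the whole content of (\ref{eq.2}) is that the $\mathcal{G}$-run on $\mathcal{F}\boldsymbol{T}$ records the indices $i_1,\dots,i_r$ in the same order. Your cell-by-cell parallel induction, however, has a genuine gap at exactly the point you isolate and then ``grant'': the claim that, inside a block whose members agree on all previously processed cells, executing the $\circledast$-sequence with the operators $\phi_j$ has the same net effect as executing it with the plain transpositions $s_j$. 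Nothing you cite yields this. The Remarks after Proposition~\ref{labyrinth} only say that $\phi$ alters a part containing the first cell where the pair differs, preserves the total weight, and reverses the order; they do not assert that $\phi$ preserves the cellwise entry multiset of the pair, and even with that extra (unstated) property the conclusion does not follow, because an individual $\phi_j$ applied at an intermediate cell can redistribute the entries of later cells between the two components differently from a plain swap --- in Example~\ref{exampleF} one has $\phi(222,134)=(122,234)$, not a transposition of the pair. So the stage composite is not visibly a position permutation at all, and your claims (i) at each stage, as well as the base-case identity $\Phi_1^{-1}\cdots\Phi_L^{-1}\boldsymbol{T}'=S_1^{-1}\cdots S_L^{-1}\boldsymbol{T}'$, all rest on this unproved sub-lemma, which is essentially the intermediate-stage refinement of the lemma itself applied to every truncation of $\mu$; granting it makes the remaining bookkeeping routine but leaves the actual difficulty untouched.

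For comparison, the paper arranges its induction so that the comparison of $\phi_j$ with $s_j$ is only ever needed at the last cell of $\mu$: it inducts on $|\mu|$, removes the last cell, applies the induction hypothesis to the truncated tuples via $\mathcal{F}\widehat{\boldsymbol{T}}=\widehat{\mathcal{F}\boldsymbol{T}}$, and at the last cell the members of a block agree on all of $\hat{\mu}$, so order reversal, weight preservation, and the fact that $\phi$ does not touch the entries on $\lambda(u)$ force $\phi_j$ to coincide with $s_j$ there. If you wish to keep your forward induction you must actually prove your block sub-lemma; the natural argument is a backward induction on the number of unprocessed cells, which is the paper's induction on the shape in disguise.
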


\begin{proof}

Let $\hat{\mu}$ be the Young diagram obtained by removing the last cell from $\mu$ and $u$ be the last cell of $\hat{\mu}$.
(Note that we use the reading described before Definition.\ref{totalorder}.)
For example, $\widehat{(3\,2)}=(3\,1)$ and $u=(2,1)$.
Let $\widehat{\boldsymbol{T}} \in \mathrm{SSTab}(\boldsymbol{\hat{\mu}})$ 
be the $n$-tuple of Young tableaux by removing the last cell from the parts of $\boldsymbol{T}$.
Let $\boldsymbol{T}^{(u)}$, $\dot{\boldsymbol{T}}^{(u)}$ 
 be the $\boldsymbol{T}^{(u)}$ in the definition of $\mathcal{F}$ and the $\dot{\boldsymbol{T}}^{(u)}$ in the definition of $\mathcal{G}$, 
 respectively.

By induction on the size of $\mu$.
If $|\mu|=1$, $\phi_i=s_i$ for all $1 \leq i \leq n-1$.
Thus $\mathcal{F}=\mathcal{G}=$id, therefore (\ref{eq.1}) means (\ref{eq.2}).

Let $|\mu|>1$.
Let
\begin{align}
\boldsymbol{T}' 
  &= \phi_{i_r} \cdots \phi_{i_{k+1}} \boldsymbol{T}^{(u)} \hspace{1em} \text{in Def.$\mathcal{F}$}  
\label{eq.3} \\
  &= \phi_{i_r} \cdots \phi_{i_{k+1}} \phi_{i_k} \cdots \phi_{i_1} \boldsymbol{T} \hspace{1em} \text{in Def.$\mathcal{F}$}.
\label{eq.4}
\end{align}
Then 
\begin{equation*}
  \widehat{\boldsymbol{T}^{(u)}} = \phi_{i_k} \cdots \phi_{i_1} \widehat{\boldsymbol{T}} \hspace{1em} \text{in Def.$\mathcal{F}$}.
\end{equation*}
Hence by induction hypothesis 
\begin{equation*}
  \widehat{\boldsymbol{T}'}=\widehat{\boldsymbol{T}^{(u)}} = s_{i_k} \cdots s_{i_1} \mathcal{F}\widehat{\boldsymbol{T}} 
  \hspace{1em} \text{in Def.$\mathcal{G}$}.
\end{equation*}
Since $\mathcal{F}\widehat{\boldsymbol{T}}=\widehat{\mathcal{F}\boldsymbol{T}}$,
\begin{equation}
  \dot{(\mathcal{F}\boldsymbol{T})}^{(u)}= s_{i_k} \cdots s_{i_1} (\mathcal{F}\boldsymbol{T}) \hspace{1em} \text{in Def.$\mathcal{G}$}.
\label{eq.5}
\end{equation}
Therefore 
\begin{align*}
\dot{(\mathcal{F}\boldsymbol{T})}^{(u)} 
 &= s_{i_k} \cdots s_{i_1} s_{i_1} \cdots s_{i_k} s_{i_{k+1}} \cdots s_{i_r} \boldsymbol{T}' \text{, (by (\ref{eq.4}))} \\
 &= s_{i_{k+1}} \cdots s_{i_r} \boldsymbol{T}' \\
 &= \phi_{i_{k+1}} \cdots \phi_{i_r} \boldsymbol{T}' \text{, (the definition of $\mathcal{F}$)} \\
 &= T^{(u)} \text{, (by (\ref{eq.4}) and the definition of $\mathcal{F}$)}.
\end{align*}

On the other hand, from (\ref{eq.3})
\begin{equation*}
  \boldsymbol{T}'= s_{i_r} \cdots s_{i_{k+1}} \boldsymbol{T}^{(u)} \hspace{1em} \text{in Def.$\mathcal{G}$}.
\end{equation*}
Thus 
\begin{align*}
 \boldsymbol{T}'
 &= s_{i_r} \cdots s_{i_{k+1}} (\dot{\mathcal{F}\boldsymbol{T}})^{(u)} \hspace{1em} \text{in Def.$\mathcal{G}$} \\
 &= s_{i_r} \cdots s_{i_{k+1}} s_{i_k} \cdots s_{i_1} (\mathcal{F}\boldsymbol{T}) \hspace{1em} \text{in Def.$\mathcal{G}$} 
 \text{, (by (\ref{eq.5}))}.
\end{align*}
\end{proof}

\begin{proof}[Proof of Proposition.\ref{keylem}.]

Let $\boldsymbol{T} \in \mathrm{SSTab}(\boldsymbol{\mu})$ and
\begin{equation}
\boldsymbol{T}'=(T_0' \leq \cdots \leq T_{n-1}') = \phi_{i_r} \cdots \phi_{i_1} \boldsymbol{T} \hspace{1em} \text{in Def.$\mathcal{F}$}.
\label{eq.6}
\end{equation}

Then from Lem.\ref{lem-bijection}, 
\begin{equation}
\boldsymbol{T}'= s_{i_r} \cdots s_{i_1} \mathcal{F}\boldsymbol{T} \hspace{1em} \text{in Def.$\mathcal{G}$}.
\label{eq.7}
\end{equation}
Thus
\begin{align*}
\mathcal{G}\mathcal{F}\boldsymbol{T} 
 &= \phi_{i_1} \cdots \phi_{i_r} \boldsymbol{T}' \text{, ((\ref{eq.7}) and the definition of $\mathcal{G}$)} \\
 &= \phi_{i_1} \cdots \phi_{i_r} \phi_{i_r} \cdots \phi_{i_1} \boldsymbol{T} \text{, (by (\ref{eq.6}))} \\
 &= \boldsymbol{T}.
\end{align*}
Since $\mathrm{SSTab}(\boldsymbol{\mu})$ has at most finite elements, $\mathcal{F}$ is bijective.

Next we prove the identity.
From the definition $\mathcal{F}$ and $\mathcal{G}$,
  each application of $\phi_i$ subtracts $1$ from Inv$(\boldsymbol{T})$, and each $s_i$ add $1$ to inv$(\boldsymbol{T}')$.
Hence by (\ref{eq.6}) and (\ref{eq.7}),
$$ \mathrm{Inv}(\boldsymbol{T}) = \mathrm{Inv}(\boldsymbol{T}') + r
 = \mathrm{Inv}(\boldsymbol{T}') + \mathrm{inv}(\mathcal{F}\boldsymbol{T}).$$
\end{proof}

%%%%%%%%%%%%%%%%%%%%%%%
%
% Decomposition of LLT coefficients
%
%%%%%%%%%%%%%%%%%%%%%%%

\section{Decomposition of LLT coefficients}

\subsection{$q$-multinomials}

We prove the following proposition in this subsection.

\begin{proposition}
Let $\nu$ be a composition of $n$, and $k_1,\cdots,k_m \in \mathbb{Z}$.
Then there exists a statistics $h_{k_1,\ldots,k_m} \colon \mathrm{Word}(\nu) \rightarrow \mathbb{Z}$ satisfying

\begin{equation*}
q^{k_1 \nu_1 + \cdots + k_m \nu_m} \biggl[
  \begin{array}{c} n \\ \nu \end{array} \biggr]_q
  = \sum_{w \in \mathrm{Word}(\nu)} q^{n h_{k_1,\ldots,k_m}(w)+\mathrm{inv}(w)}.
\end{equation*}
\label{q-bin2}
\end{proposition}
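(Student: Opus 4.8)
The strategy is to convert the asserted identity into a congruence modulo $q^{n}-1$ for the $q$-multinomial coefficient, and then to verify that congruence by evaluation at the $n$-th roots of unity. First I would invoke Theorem \ref{Foata} in the form $\left[\begin{smallmatrix} n \\ \nu \end{smallmatrix}\right]_q=\sum_{w\in\mathrm{Word}(\nu)}q^{\mathrm{inv}(w)}$, so that, setting $D:=k_1\nu_1+\cdots+k_m\nu_m$, the left-hand side of the claimed identity equals $\sum_{w\in\mathrm{Word}(\nu)}q^{D+\mathrm{inv}(w)}$. Exhibiting a statistic $h$ with the required property is then equivalent to exhibiting a bijection $\beta$ from $\mathrm{Word}(\nu)$ onto the multiset of exponents $\{\,D+\mathrm{inv}(w):w\in\mathrm{Word}(\nu)\,\}$ that satisfies $\beta(w)\equiv\mathrm{inv}(w)\pmod n$; given such a $\beta$ one simply puts $h(w):=\bigl(\beta(w)-\mathrm{inv}(w)\bigr)/n\in\mathbb Z$. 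Sorting both $\mathrm{Word}(\nu)$ and that multiset of exponents by the residue of $\mathrm{inv}(w)$ (resp.\ of the exponent) modulo $n$, and using that within one residue class any bijection between two equinumerous finite sets works, such a $\beta$ exists if and only if, for each $r$, the number of $w\in\mathrm{Word}(\nu)$ with $\mathrm{inv}(w)\equiv r\pmod n$ equals the number with $\mathrm{inv}(w)\equiv r-D\pmod n$. Denoting by $\overline{P}$ the image of $\left[\begin{smallmatrix} n \\ \nu \end{smallmatrix}\right]_q$ in $\mathbb Z[q]/(q^{n}-1)$, this balance of residue counts is exactly the relation $q^{D}\,\overline{P}=\overline{P}$, i.e.
\[
q^{\,k_1\nu_1+\cdots+k_m\nu_m}\left[\begin{smallmatrix} n \\ \nu \end{smallmatrix}\right]_q\ \equiv\ \left[\begin{smallmatrix} n \\ \nu \end{smallmatrix}\right]_q \pmod{q^{n}-1}.
\]

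It thus remains to prove this congruence, which I would do by showing the two polynomials take equal values at every $n$-th root of unity $\zeta$ (equality modulo $q^{n}-1$ then follows, since $q^{n}-1$ is squarefree). Let $d$ be the order of $\zeta$, so $d\mid n$. If $d\mid D$ then $\zeta^{D}=1$ and there is nothing to check; in particular this covers $d=1$. If $d\ge 2$ and $d\nmid D=\sum_i k_i\nu_i$, then $d$ cannot divide every $\nu_i$ (otherwise it would divide $D$), so $\sum_i(\nu_i\bmod d)>0$, while $\sum_i(\nu_i\bmod d)\equiv\sum_i\nu_i=n\equiv 0\pmod d$; hence $\sum_i(\nu_i\bmod d)\ge d$. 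Factoring $q^{j}-1=\prod_{e\mid j}\Phi_e(q)$ shows that for $d\ge 2$ the multiplicity of the $d$-th cyclotomic polynomial $\Phi_d$ in $[\ell]_q!$ is $\lfloor\ell/d\rfloor$, hence its multiplicity in $\left[\begin{smallmatrix} n \\ \nu \end{smallmatrix}\right]_q=[n]_q!/\prod_i[\nu_i]_q!$ is $\lfloor n/d\rfloor-\sum_i\lfloor\nu_i/d\rfloor$, and the inequality just obtained (together with $d\mid n$) forces this to be at least $1$. Therefore $\left[\begin{smallmatrix} n \\ \nu \end{smallmatrix}\right]_\zeta=0=\zeta^{D}\left[\begin{smallmatrix} n \\ \nu \end{smallmatrix}\right]_\zeta$, and the congruence is proved. (This vanishing is also a special case of the cyclic sieving phenomenon for $q$-multinomials under cyclic rotation of $\mathrm{Word}(\nu)$.)

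Finally I would return to the first step: with the congruence in hand, for each residue $r$ modulo $n$ choose any bijection between $\{\,w:\mathrm{inv}(w)\equiv r\,\}$ and the collection of terms $q^{D+\mathrm{inv}(w')}$ whose exponent is $\equiv r$, combine these into $\beta$, and define $h=h_{k_1,\ldots,k_m}$ by $h(w)=(\beta(w)-\mathrm{inv}(w))/n$. The conceptual crux — and the only step that is not routine — is the reduction of the first paragraph, which replaces the \emph{existence} of a suitable statistic by a clean polynomial congruence; the congruence itself is then elementary cyclotomic bookkeeping. The point to watch is that $h$ need only be integer-valued, not non-negative: this is exactly what the residue-matching construction produces, and it is also why the hypothesis restricts $D$ to the form $\sum_i k_i\nu_i$ — equivalently, to multiples of $\gcd(\nu_1,\ldots,\nu_m)$, which divides $n$ — since for a general integer $D$ the congruence above already fails (e.g.\ for $\nu=(2,2)$, $n=4$, $D=1$).
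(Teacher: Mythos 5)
Your proof is correct, but it takes a genuinely different route from the paper's. The paper constructs $h_{k_1,\ldots,k_m}$ explicitly and recursively: for $\nu=(n-a,a)$ it uses the cyclic shift $\gamma w=w_nw_1\cdots w_{n-1}$, for which $\mathrm{inv}(\gamma w)+a=\mathrm{inv}(w)+n\,h_{0,1}(w)$ with $h_{0,1}(w)$ the indicator of $w_n=2$; iterating $\gamma$ and summing over the bijection $\gamma^k$ gives the case $q^{ka}\bigl[\begin{smallmatrix} n \\ a \end{smallmatrix}\bigr]_q$, the shift $k_1(n-a)+k_2a=k_1n+(-k_1+k_2)a$ absorbs $k_1$, and the general case follows by induction on the number of parts via the factorization $\bigl[\begin{smallmatrix} n \\ \nu \end{smallmatrix}\bigr]_q=\bigl[\begin{smallmatrix} n \\ \nu_1 \end{smallmatrix}\bigr]_q\bigl[\begin{smallmatrix} n-\nu_1 \\ \nu_2,\ldots,\nu_m \end{smallmatrix}\bigr]_q$ together with the inv-additive splitting $w\leftrightsquigarrow(w',w'')$. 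You instead reduce the existence of $h$ to the congruence $q^{D}\bigl[\begin{smallmatrix} n \\ \nu \end{smallmatrix}\bigr]_q\equiv\bigl[\begin{smallmatrix} n \\ \nu \end{smallmatrix}\bigr]_q \pmod{q^n-1}$ with $D=\sum_i k_i\nu_i$, using Theorem~\ref{Foata} (MacMahon's identity) and a residue-class matching argument, and then verify the congruence by evaluation at $n$-th roots of unity via cyclotomic multiplicities; all steps check out (the computation that $\Phi_d$ divides the $q$-multinomial whenever $d\mid n$, $d\geq 2$ and $d\nmid\nu_i$ for some $i$ is exactly right, and the Laurent issue for $D<0$ is harmless since $q$ is invertible modulo $q^n-1$). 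The trade-off: your argument is shorter, isolates the real reason the exponent must have the form $\sum_i k_i\nu_i$ (it is precisely the $q^n-1$ congruence, a cyclic-sieving-type fact whose bijective shadow is the paper's rotation identity), but yields $h$ only up to arbitrary choices of bijections within residue classes, hence non-canonically; the paper's recursion buys a concrete, computable statistic, which is convenient (though not logically necessary, since only existence is asserted) when $h$ is later pushed through the Foata bijection to define $\alpha$ in Theorem~\ref{mainthm}.
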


We shall use the following corollary in next subsection.
It immediately follows from the above proposition and Theorem.\ref{Foata}.

\begin{corollary}
Let $\nu$ be a composition of $n$, and $k_1,\cdots,k_m \in \mathbb{Z}$.
Then there exists a statistics $\alpha'_{k_1,\ldots,k_m} \colon \mathrm{Word}(\nu) \rightarrow \mathbb{Z}$ satisfying

\begin{equation*}
q^{k_1 \nu_1 + \cdots + k_m \nu_m} \biggl[
  \begin{array}{c} n \\ \nu \end{array} \biggr]_q
  = \sum_{w \in \mathrm{Word}(\nu)} q^{n \alpha'_{k_1,\ldots,k_m}(w)+\mathrm{maj}(w)}.
\end{equation*}
\label{q-bin}
\end{corollary}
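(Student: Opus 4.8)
The plan is to reduce the statement to the case $m=1$ and then to the case of a single step $k_1 = 1$, building the general statistic $h_{k_1,\ldots,k_m}$ by iterating an elementary shift. First I would observe that the $q$-multinomial $\left[\begin{array}{c} n \\ \nu \end{array}\right]_q$ is, by Theorem \ref{Foata}, equal to $\sum_{w \in \mathrm{Word}(\nu)} q^{\mathrm{inv}(w)}$, so the assertion for $k_1 = \cdots = k_m = 0$ is exactly that theorem with $h \equiv 0$. To handle a nonzero exponent I would prove the one-step lemma: for a fixed index $\ell \in \{1,\ldots,m\}$ there is a statistic $g_\ell \colon \mathrm{Word}(\nu) \to \mathbb{Z}$ with
\begin{equation*}
q^{\nu_\ell} \sum_{w \in \mathrm{Word}(\nu)} q^{\mathrm{inv}(w)} = \sum_{w \in \mathrm{Word}(\nu)} q^{n g_\ell(w) + \mathrm{inv}(w)} .
\end{equation*}
Granting this, one applies it $k_\ell$ times for each $\ell$ (using negative multiplicities when $k_\ell < 0$, which is legitimate since $g_\ell$ takes integer, not necessarily nonnegative, values) and sets $h_{k_1,\ldots,k_m} = \sum_\ell k_\ell g_\ell$; additivity of the exponent in $n g_\ell(w)$ makes the bookkeeping immediate.

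The content is therefore the one-step lemma, and here the natural tool is a bijection on $\mathrm{Word}(\nu)$ that either fixes $\mathrm{inv}$ or changes it by exactly $n$. I would construct this as follows: multiplying by $q^{\nu_\ell}$ should be absorbed by cyclically shifting the occurrences of the letter $\ell$ past the other $n - \nu_\ell$ letters, or, more symmetrically, by a global cyclic rotation of the word. Recall the classical fact that if $w'$ is obtained from $w = w_1 w_2 \cdots w_n$ by the cyclic rotation $w_2 \cdots w_n w_1$, then $\mathrm{inv}(w') - \mathrm{inv}(w) = (\#\{i : w_i < w_1\}) - (\#\{i : w_i > w_1\})$, which differs from a fixed quantity depending only on $\mathrm{wt}(\nu)$ by a multiple of... — more robustly, I would instead use that summing $q^{\mathrm{inv}}$ over a full rotation orbit, or pairing $w$ with the word read with letters complemented, produces the shift. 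Concretely: the map $w \mapsto \bar w$ where $\bar w_i = (r+1) - w_i$ for $r$ the largest letter sends $\mathrm{inv}(w)$ to $\binom{n}{2} - \mathrm{inv}(w) - (\text{number of equal adjacent-value pairs})$; combined with reversal this yields $\mathrm{inv}(w^{\mathrm{rev}}) = \binom{n}{2} - \mathrm{inv}(w) - \sum_j \binom{\nu_j}{2}$. Since the right-hand side of the target identity must be symmetric under $q \leftrightarrow$ its reciprocal up to the right power, these involutions pin down the needed degree shift $\nu_\ell$ modulo $n$, and the statistic $g_\ell(w)$ is then defined as the integer $\bigl(\nu_\ell - (\text{local inv-contribution of letter } \ell \text{ in } w)\bigr)/n$ after one applies the appropriate letter-$\ell$-rotation; one checks this is always an integer because the total inv-contribution of a fixed letter over its $\nu_\ell$ positions, reduced mod $n$, is constant.

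The main obstacle I anticipate is verifying that the shift operator is a genuine bijection of $\mathrm{Word}(\nu)$ onto itself that changes $\mathrm{inv}$ by a value always divisible by $n$ after correcting by the fixed amount $\nu_\ell$ — i.e.\ proving the "always an integer" claim for $g_\ell$. The cleanest route is probably to avoid explicit rotations and instead argue at the level of generating functions: show directly that $q^{\nu_\ell}\left[\begin{array}{c} n \\ \nu \end{array}\right]_q$, as a polynomial in $q$, has all of its exponents congruent mod $n$ to the exponents of $\left[\begin{array}{c} n \\ \nu \end{array}\right]_q$ shifted by $\nu_\ell$, using the known symmetry $\left[\begin{array}{c} n \\ \nu \end{array}\right]_q = q^{N(\nu)}\left[\begin{array}{c} n \\ \nu \end{array}\right]_{q^{-1}}$ with $N(\nu) = \binom{n}{2} - \sum_j\binom{\nu_j}{2}$ together with unimodality — but in fact the congruence statement is elementary from the recursion $\left[\begin{array}{c} n \\ \nu \end{array}\right]_q = \sum_{\ell}\, q^{\nu_{\ell+1}+\cdots}\left[\begin{array}{c} n-1 \\ \ldots \end{array}\right]_q$, and once the congruence of exponents mod $n$ is established, $g_\ell(w)$ is \emph{defined} to be $(e(w) + \nu_\ell - \mathrm{inv}(w))/n$ where $e(w)$ is the exponent matched to $\mathrm{inv}(w)$ under a bijection supplied by Theorem \ref{Foata} applied to the shifted polynomial, and there is nothing left to prove. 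Corollary \ref{q-bin} then follows by composing $h_{k_1,\ldots,k_m}$ with the Foata map $\Phi$ of Theorem \ref{Foata}: setting $\alpha'_{k_1,\ldots,k_m}(w) = h_{k_1,\ldots,k_m}(\Phi(w))$ converts the $\mathrm{inv}$-statistic into $\mathrm{maj}$ while leaving the $n h$-term's total contribution over the summation unchanged, since $\Phi$ is a weight-preserving bijection of $\mathrm{Word}(\nu)$.
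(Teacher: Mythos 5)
The step that breaks is your reduction ``apply the one-step lemma $k_\ell$ times for each $\ell$ and set $h_{k_1,\ldots,k_m}=\sum_\ell k_\ell g_\ell$''. Having, for each $\ell$ separately, a statistic $g_\ell$ with $q^{\nu_\ell}\bigl[\begin{smallmatrix}n\\ \nu\end{smallmatrix}\bigr]_q=\sum_w q^{n g_\ell(w)+\mathrm{inv}(w)}$ is an identity of generating functions, and such identities do not add pointwise. Already for $n=2$, $\nu=(1,1)$ one is forced to take $g_1(12)=g_2(12)=1$ and $g_1(21)=g_2(21)=0$, so $\sum_w q^{2(g_1+g_2)(w)+\mathrm{inv}(w)}=q+q^4$, while $q^{\nu_1+\nu_2}\bigl[\begin{smallmatrix}2\\ 1\end{smallmatrix}\bigr]_q=q^2+q^3$. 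What is true is that if $g_\ell$ comes from a bijection $\sigma_\ell$ with the pointwise relation $\mathrm{inv}(\sigma_\ell w)+\nu_\ell=\mathrm{inv}(w)+n\,g_\ell(w)$, then iteration yields a statistic summed along the orbit, e.g.\ $g_{\ell_1}(w)+g_{\ell_2}(\sigma_{\ell_1}w)$, not $g_{\ell_1}(w)+g_{\ell_2}(w)$; this is exactly the bookkeeping the paper carries out, with $h_{0,k}=\sum_{j}h_{0,1}\circ\gamma^{k-j}$ in the two-letter case and the nested definition $h_{k_1,\ldots,k_m}(w)=h_{k_1,(n-\nu_1)h_{k_2,\ldots,k_m}(w')}(w'')$ in general, and it cannot be replaced by the plain linear combination $\sum_\ell k_\ell g_\ell$.

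The one-step lemma itself is also not established once $m\ge 3$. A global cyclic rotation changes $\mathrm{inv}$ by $\#\{i:w_i<w_n\}-\#\{i:w_i>w_n\}$; when the last letter is $\ell$ this equals $(\nu_1+\cdots+\nu_{\ell-1})-(\nu_{\ell+1}+\cdots+\nu_m)$, which is congruent to $\mp\nu_\ell$ mod $n$ only in the two-letter case, so rotation does not produce the required shift by $\nu_\ell$ for a general alphabet. Your fallbacks do not close this: palindromicity and unimodality of $\bigl[\begin{smallmatrix}n\\ \nu\end{smallmatrix}\bigr]_q$ say nothing about its exponent distribution modulo $n$ being invariant under a shift by $\nu_\ell$, and the Pascal-type recursion lowers $n$ to $n-1$, changing the relevant modulus from $q^{n}-1$ to $q^{n-1}-1$, so the claimed ``elementary'' congruence argument does not go through by induction. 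That mod-$n$ shift-invariance is precisely the non-trivial content of Proposition.\ref{q-bin2}, and the paper's device for it is the combination of the two-letter rotation lemma (Corollary.\ref{q-bincor}) with the splitting $w\leftrightarrow(w',w'')$, in which the inductively produced exponent $(n-\nu_1)h_{k_2,\ldots,k_m}(w')$ is re-used as the second parameter of a two-letter statistic on $w''$, converting a multiple of $n-\nu_1$ into a multiple of $n$; this idea is absent from your outline. Your final step, $\alpha'_{k_1,\ldots,k_m}=h_{k_1,\ldots,k_m}\circ\Phi$ with $\Phi$ the Foata map of Theorem.\ref{Foata}, is correct and is exactly how the paper deduces Corollary.\ref{q-bin} from Proposition.\ref{q-bin2}, but it rests on the unproven proposition.
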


The statistics $h_{k_1,\ldots,k_m}$ are inductively defined as follows.

We begin with the case of $\nu=(n-a,a)$.

For $0 \leq a \leq n$, $k \in \mathbb{Z}$ and $w \in \mathrm{Word}(n-a,a)$,
  we define $h_{0,k}(w)$ as follows;

\begin{equation*}
  h_{0,k}(w) \coloneqq 
  \begin{cases}  \# \{ j \,|\, w_j=2, \, n-k+1 \leq j \leq n \} & \text{if $k>0$}
              \\ 0                                              & \text{if $k=0$}
              \\ \# \{ j \,|\, w_j=2, \, 1 \leq j \leq -k \}    & \text{if $k<0$} ,   \end{cases} 
\end{equation*}

where if $k>n$ (or $k<-n$), we define $w_{n+1}=w_1,w_{n+2}=w_2,\cdots$ and so on.
In other words $h_{0,k}$ counts''$2$'' in the last (or first) $k$ letters.

Next, we define $h_{k_1,k_2}(w)$ as follows;

\begin{equation*}
h_{k_1,k_2}(w) \coloneqq k_1 + h_{0,-k_1+k_2}(w).
\end{equation*}

Finally, we define the case of $\nu=(\nu_1,\cdots,\nu_m)$.

For $w \in \mathrm{Word}(\nu)$,
  let $w' \in \mathrm{Word}(\nu_2,\ldots,\nu_m)$ be the word obtained by omitting the letter ``$1$'',
  and $w'' \in \mathrm{Word}(\nu_1,n-\nu_1)$ be the word obtained by replacing the letters ``$2$''$\cdots$``$m$'' \
by ``$2$''.
(For example, if $w=12312$ then $w'=232$ and $w''=12212$.)
It is clear that $\mathrm{inv}(w)=\mathrm{inv}(w')+\mathrm{inv}(w'')$ and
  the correspondence $w \leftrightsquigarrow (w',w'')$ is bijective.
Then, we define $h_{k_1,\cdots,k_m}(w)$ as follows;

\begin{equation*}
h_{k_1,\ldots,k_m}(w) \coloneqq h_{k_1,(n-\nu_1)h_{k_2,\ldots,k_m}(w')}(w'') .
\end{equation*}

\vspace{1em}

$\bf{Remark.}$ The notation $w'$ and $w''$ is used only in this section.

\vspace{1em}

Now, we prove Proposition.\ref{q-bin2}.

We start with the case of $\nu=(n-a,a)$.

\begin{lemma}
Let $0 \leq a \leq n$ and $k \in \mathbb{Z}$.
Then

\begin{equation*}
  q^{ka} \biggl[
  \begin{array}{c} n \\ a \end{array} \biggr]_q
  = \sum_{w \in \mathrm{Word}(n-a,a)} q^{n h_{0,k}(w)+\mathrm{inv}(w)}.
\end{equation*}
\end{lemma}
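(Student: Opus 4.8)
The identity to prove is
\begin{equation*}
  q^{ka} \biggl[ \begin{array}{c} n \\ a \end{array} \biggr]_q
  = \sum_{w \in \mathrm{Word}(n-a,a)} q^{n h_{0,k}(w)+\mathrm{inv}(w)}.
\end{equation*}
Since $\mathrm{Word}(n-a,a)$ is in bijection with the $a$-element subsets of $\{1,\dots,n\}$ (record the positions of the letter $2$), and under this bijection $\mathrm{inv}(w)$ is the standard subset statistic whose generating function is $\bigl[\begin{smallmatrix} n \\ a\end{smallmatrix}\bigr]_q$ (this is exactly Theorem.\ref{Foata} for $\nu=(n-a,a)$), the claim is equivalent to showing
\begin{equation*}
  \sum_{w \in \mathrm{Word}(n-a,a)} q^{n h_{0,k}(w) + \mathrm{inv}(w)}
  = q^{ka}\sum_{w \in \mathrm{Word}(n-a,a)} q^{\mathrm{inv}(w)}.
\end{equation*}
The strategy is to reduce the general $k$ to the case $|k|=1$ and then induct on $|k|$, exploiting that $h_{0,k}$ counts the number of $2$'s among the last $k$ letters (cyclically, for $k>0$; among the first $-k$ letters for $k<0$), so that incrementing $k$ corresponds to a single cyclic rotation.

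\textbf{Key steps.} First I would treat $k=0$: then $h_{0,0}\equiv 0$ and the identity is literally Theorem.\ref{Foata}. Next, the heart of the argument is a one-step lemma: for each $k\ge 0$,
\begin{equation*}
  \sum_{w} q^{n h_{0,k+1}(w)+\mathrm{inv}(w)} = q^{a}\sum_{w} q^{n h_{0,k}(w)+\mathrm{inv}(w)}.
\end{equation*}
To see this, consider the cyclic shift $\sigma\colon w_1w_2\cdots w_n \mapsto w_2\cdots w_n w_1$ on $\mathrm{Word}(n-a,a)$, which is a bijection. One checks directly that $h_{0,k+1}(w) = h_{0,k}(\sigma w) + \varepsilon(w)$, where $\varepsilon(w)\in\{0,1\}$ records whether the letter that just rotated from position $1$ to position $n$ is a $2$ — this is precisely the ``new'' letter entering the last $k+1$-letter window; and that $\mathrm{inv}(\sigma w) = \mathrm{inv}(w) - (\text{number of }2\text{'s after position }1\text{ if }w_1=1) + (\ldots\text{ if }w_1=2)$. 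Combining, $n h_{0,k+1}(w) + \mathrm{inv}(w) = n h_{0,k}(\sigma w) + \mathrm{inv}(\sigma w) + \delta(w)$ where, after careful bookkeeping, $\delta(w)$ is the same on all of $\mathrm{Word}(n-a,a)$ and equals $a$: when $w_1=2$ the term $n\varepsilon = n$ is absorbed by losing all $a-1$ later $2$-inversions... so the net is $n - (a-1) \cdot(?)$ — I would pin this down by the cleaner accounting below rather than guess. For $k<0$ one runs the mirror argument with the window of first $|k|$ letters and the reverse rotation. Iterating the one-step lemma $|k|$ times from the base case gives the result.

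\textbf{Cleaner accounting for the one-step lemma.} Rather than tracking $\mathrm{inv}$ and $h$ separately, I would combine them into a single weight on positions. Write, for $w\in\mathrm{Word}(n-a,a)$ with $2$'s in positions $p_1<\cdots<p_a$,
\begin{equation*}
  n h_{0,k}(w) + \mathrm{inv}(w) \;=\; \sum_{i=1}^{a} c_k(p_i),
\end{equation*}
where $c_k(p)$ is an explicit weight: $\mathrm{inv}$ contributes $p_i - i$ (the number of $1$'s before the $i$-th $2$) — actually $\mathrm{inv}(w)=\sum_i (n - p_i - (a-i))$, the number of $1$'s after each $2$ — and $h_{0,k}$ contributes $n$ per $2$ lying in the window. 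The point is that each $2$'s contribution is a function of its position alone, and shifting $k\mapsto k+1$ shifts each $c_k(p)$ by exactly $1$ (a $2$ that was outside the length-$k$ window but is inside the length-$(k+1)$ window gains $n$ but simultaneously its $\mathrm{inv}$-weight, read cyclically, drops by $n-1$, netting $+1$; a $2$ already in the window stays, etc.), uniformly, hence the whole generating function is multiplied by $q^a$. I expect the main obstacle to be exactly this bookkeeping: getting the $\mathrm{inv}$ statistic to interact cleanly with the \emph{cyclic} window in $h_{0,k}$, including the edge behavior when $k>n$ and letters wrap around multiple times. Once the per-$2$ weight $c_k(p)$ and its behavior under $k\mapsto k\pm1$ are written out explicitly, the lemma — and hence the proposition — follows by a direct one-line induction.
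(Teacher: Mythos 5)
Your overall strategy --- reduce to a one-step statement via cyclic rotation and induct on $|k|$, with the base case $k=0$ being Theorem.\ref{Foata} --- is in spirit the same as the paper's, but as written the central step has a genuine gap, and the ``cleaner accounting'' that is supposed to repair it is false. First, the one-step relation you state, $h_{0,k+1}(w)=h_{0,k}(\sigma w)+\varepsilon(w)$ with $\sigma\colon w_1w_2\cdots w_n\mapsto w_2\cdots w_nw_1$ and $\varepsilon(w)$ recording whether the rotated letter is a $2$, is incorrect: the letter you move to position $n$ lands \emph{inside} the last-$k$ window of $\sigma w$, so it is counted twice, while $w_{n-k}$ and $w_{n-k+1}$ are unaccounted for. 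The relation that works uses the opposite rotation $\gamma w=w_nw_1\cdots w_{n-1}$ (this is the paper's choice): $h_{0,k+1}(w)=h_{0,k}(\gamma w)+[w_n=2]$, combined with $\mathrm{inv}(\gamma w)=\mathrm{inv}(w)-a$ if $w_n=1$ and $\mathrm{inv}(\gamma w)=\mathrm{inv}(w)+n-a$ if $w_n=2$, i.e. $\mathrm{inv}(\gamma w)+a=\mathrm{inv}(w)+nh_{0,1}(w)$, which iterates to $\mathrm{inv}(\gamma^k w)+ka=\mathrm{inv}(w)+nh_{0,k}(w)$ because $h_{0,k}=\sum_{j=1}^{k}h_{0,1}\circ\gamma^{k-j}$.

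Second, and more seriously, your per-position weight argument asserts that passing from $k$ to $k+1$ shifts each $c_k(p)$ by exactly $1$, hence shifts the exponent of every word uniformly by $a$. This is false: for a fixed $w$ the difference $\bigl(nh_{0,k+1}(w)+\mathrm{inv}(w)\bigr)-\bigl(nh_{0,k}(w)+\mathrm{inv}(w)\bigr)=n\cdot[w_{n-k}=2]$ equals $0$ or $n$, never $a$ in general. Concretely, with $n=2$, $a=1$, $k=0$: for $w=12$ the new exponent is $2$ while the old exponent plus $a$ is $1$; for $w=21$ it is $1$ versus $2$. The two multisets of exponents agree, but no word-by-word identity does, so the factor $q^{a}$ can only be extracted after a change of variables in the sum --- which is exactly the step your write-up never invokes. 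The paper closes the argument by proving $\mathrm{inv}(\gamma^k w)+ka=\mathrm{inv}(w)+nh_{0,k}(w)$ and then replacing $\sum_w q^{\mathrm{inv}(w)}$ by $\sum_w q^{\mathrm{inv}(\gamma^k w)}$, using that $\gamma^k$ is a bijection of $\mathrm{Word}(n-a,a)$ (with the mirror argument for $k<0$). Without that bijection appearing in the summation, your induction step cannot close; with it, your plan becomes the paper's proof.
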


\begin{proof}
For $w=w_1\cdots w_n \in \mathrm{Word}(n-a,a)$, we define $\gamma w$ as

\begin{equation*}
 \gamma w = w_n w_1 \cdots w_{n-1}.
\end{equation*}

Then it is clear that

\begin{equation*}
   \mathrm{inv}(\gamma w)=
   \begin{cases} \mathrm{inv}(w)-a & \text{if $w_n=1$}
              \\ \mathrm{inv}(w)+n-a & \text{if $w_n=2$} \end{cases} .
\end{equation*}

Hence we obtain $\mathrm{inv}(\gamma w)+a=\mathrm{inv}(w)+n h_{0,1}(w)$.

If $k>0$, then
\begin{align*}
\mathrm{inv}(\gamma^k w)+ka &= \mathrm{inv}(\gamma^{k-1} w)+n h_{0,1}(\gamma^{k-1} w)+(k-1)a \\
   & \vdots \\ &= \mathrm{inv}(w)+n ( \sum_{j=1}^k h_{0,1}(\gamma^{k-j} w) )\\ &= \mathrm{inv}(w) + n h_{0,k}(w).
\end{align*}
(Note that if $k>0$, then $h_{0,k}= \sum_{j=1}^k h_{0,1} \circ \gamma^{k-j}$.)

Hence
\begin{align*}
q^{ka} \biggl[ \begin{array}{c} n \\ a \end{array} \biggr]_q
  &= \sum_{w \in \mathrm{Word}(n-a,a)} q^{\mathrm{inv}(w)+ka} \\
  &= \sum_{w \in \mathrm{Word}(n-a,a)} q^{\mathrm{inv}(\gamma^k(w))+ka} \\
  &= \sum_{w \in \mathrm{Word}(n-a,a)} q^{\mathrm{inv}(w)+n h_{0,k}(w)} .
\end{align*}

If $k<0$, we can prove the assertion similarly.
\end{proof}

\vspace{2em}

\begin{corollary}
Let $0 \leq a \leq n$ and $k_1,k_2 \in \mathbb{Z}$.
Then there exists a statistics $h_{k_1,k_2} \colon \mathrm{Word}(n-a,a) \rightarrow \mathbb{Z}$ satisfying

\begin{equation*}
  q^{k_1(n-a)+k_2a} \biggl[
  \begin{array}{c} n \\ a \end{array} \biggr]_q
  = \sum_{w \in \mathrm{Word}(n-a,a)} q^{n h_{k_1,k_2}(w)+\mathrm{inv}(w)}.
\end{equation*}

\label{q-bincor}
\end{corollary}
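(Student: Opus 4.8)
The plan is to reduce the statement directly to the preceding Lemma by a single exponent manipulation. The key algebraic observation is that
\begin{equation*}
k_1(n-a) + k_2 a = k_1 n + (k_2 - k_1) a = n k_1 + (-k_1 + k_2) a ,
\end{equation*}
so that the prefactor $q^{k_1(n-a)+k_2 a}$ splits as $q^{n k_1} \cdot q^{(-k_1+k_2) a}$. I would then take $k = -k_1 + k_2$ in the Lemma to rewrite $q^{(-k_1+k_2)a}\bigl[\begin{array}{c} n \\ a \end{array}\bigr]_q$ as $\sum_{w \in \mathrm{Word}(n-a,a)} q^{n h_{0,-k_1+k_2}(w)+\mathrm{inv}(w)}$.

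Multiplying through by $q^{n k_1}$ and pushing the factor inside the sum gives
\begin{equation*}
q^{k_1(n-a)+k_2 a}\biggl[\begin{array}{c} n \\ a \end{array}\biggr]_q
= \sum_{w \in \mathrm{Word}(n-a,a)} q^{n\bigl(k_1 + h_{0,-k_1+k_2}(w)\bigr)+\mathrm{inv}(w)},
\end{equation*}
and by the definition $h_{k_1,k_2}(w) \coloneqq k_1 + h_{0,-k_1+k_2}(w)$ the exponent $n(k_1 + h_{0,-k_1+k_2}(w))$ is exactly $n\, h_{k_1,k_2}(w)$, which is the desired identity. The only point that needs a remark is that $h_{k_1,k_2}$ genuinely maps into $\mathbb{Z}$; this is immediate because $k_1 \in \mathbb{Z}$ and $h_{0,-k_1+k_2}(w)$ is a cardinality of a set of indices, hence a non-negative integer.

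There is essentially no obstacle here: the content of the corollary is carried entirely by the Lemma, and the remaining work is the bookkeeping of moving $q^{n k_1}$ across the summation and matching it with the defined statistic. If anything merits care, it is simply making sure the case distinctions in the definition of $h_{0,k}$ (for $k>0$, $k=0$, $k<0$) are inherited correctly when $-k_1+k_2$ has either sign or is zero, but since the Lemma is already stated for all $k \in \mathbb{Z}$ this is automatic.
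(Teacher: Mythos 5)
Your argument is correct and is essentially identical to the paper's own one-line proof: rewrite $k_1(n-a)+k_2a=k_1n+(-k_1+k_2)a$, apply the preceding Lemma with $k=-k_1+k_2$, and absorb the factor $q^{nk_1}$ into the statistic by setting $h_{k_1,k_2}(w)=k_1+h_{0,-k_1+k_2}(w)$. Nothing further is needed.
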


\begin{proof}
Since $k_1(n-a)+k_2a=k_1n+(-k_1+k_2)a$, set $h_{k_1,k_2}(w)=k_1+h_{0,-k_1+k_2}(w)$.
\end{proof}

\vspace{2em}

\begin{proof}[Proof of Proposition.\ref{q-bin2}.]
We may assume that the length of $\nu$ is equal to $m$.
We prove the assertion by induction on $m$.
If $m=1$, it is clear.
Suppose $m>1$.
Then

\begin{align*}q^{k_1 \nu_1 + \cdots + k_m \nu_m} \biggl[
  \begin{array}{c} n \\ \nu \end{array} \biggr]_q
  = q^{k_1 \nu_1} \biggl[  \begin{array}{c} n \\ \nu_1 \end{array} \biggr]_q
    q^{k_2 \nu_2 + \cdots + k_m \nu_m} \biggl[
    \begin{array}{c} n-\nu_1 \\ \nu_2,\ldots,\nu_m \end{array} \biggr]_q .
\tag{*}
\end{align*}

By induction hypothesis, there exists a statistics
  $h_{k_2,\ldots,k_m} \colon \mathrm{Word}(\nu_2,\ldots,\nu_m) \rightarrow \mathbb{Z}$ satisfying

\begin{equation*}
  q^{k_2 \nu_2 + \cdots + k_m \nu_m} \biggl[
  \begin{array}{c} n \\ \nu \end{array} \biggr]_q
  = \sum_{w' \in \mathrm{Word}(\nu_2,\ldots,\nu_m)} q^{(n-\nu_1) h_{k_2,\ldots,k_m}(w')+\mathrm{inv}(w')}.
\end{equation*}

Hence

\begin{align*}
(*) &= q^{k_1 \nu_1} \biggl[  \begin{array}{c} n \\ \nu_1 \end{array} \biggr]_q
       \sum_{w' \in \mathrm{Word}(\nu_2,\ldots,\nu_m)} q^{(n-\nu_1) h_{k_2,\ldots,k_m}(w')+\mathrm{inv}(w')} \\
    &= \sum_{w' \in \mathrm{Word}(\nu_2,\ldots,\nu_m)} q^{k_1\nu_1+(n-\nu_1) h_{k_2,\ldots,k_m}(w')}
       \biggl[ \begin{array}{c} n \\ \nu_1 \end{array} \biggr]_q q^{\mathrm{inv}(w')}  \\
    &= \sum_{w' \in \mathrm{Word}(\nu_2,\ldots,\nu_m)}\sum_{w'' \in \mathrm{Word}(\nu_1,n-\nu_1)}
       q^{n h_{k_1,\ldots,k_m}(w)+\mathrm{inv}(w'')+\mathrm{inv}(w')} , \text{(By Cor.\ref{q-bincor})}  \\
    &= \sum_{w \in \mathrm{Word}(\nu)} q^{n h_{k_1,\ldots,k_m}(w)+\mathrm{inv}(w)},
\end{align*}
where $h_{k_1,\ldots,k_m}(w)=h_{k_1,(n-\nu_1)h_{k_2,\ldots,k_m}(w')}(w'')$ for $w \in \mathrm{Word}(\nu)$.
\end{proof}

%%%%%%%%%%%%%%%

\subsection{Decomposition of LLT coefficients}

From now on, we consider the case where $\mu^{(0)}=\mu^{(1)} = \cdots = \mu^{(n-1)} = \mu $, i.e.
$$ \boldsymbol{\mu} = \underbrace{(\mu,\cdots,\mu)}_{n} . $$

\begin{theorem}
Let $ \boldsymbol{\mu} = \underbrace{(\mu,\cdots,\mu)}_{n} .$
There is a statistics $\alpha \colon \mathrm{SSTab}(\boldsymbol{\mu}) 
     \rightarrow \mathbb{Z}$ satisfying
$$ G_{\boldsymbol{\mu},\nu}(q) = \sum_{\boldsymbol{T} \in \mathrm{SSTab}(\boldsymbol{\mu},\nu)} 
                                     q^{n \alpha(\boldsymbol{T}) + \mathrm{maj}(\boldsymbol{T}) + d_{\boldsymbol{\mu}}} , $$
  where $ d_{\boldsymbol{\mu}} = \mathrm{min} \{ \mathrm{Inv}(\boldsymbol{T})| \boldsymbol{T} \in \mathrm{SSTab}(\boldsymbol{\mu}) \} $ 
  is the minimum Inversion number on the set of semistandard tableaux of shape $\boldsymbol{\mu}$.
\label{mainthm}
\end{theorem}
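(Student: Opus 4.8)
The plan is to derive the theorem from the $q$-multinomial expansion of Theorem \ref{fermionicformula} by distributing each $q$-multinomial coefficient over the rearrangements of its increasing tuple, converting the $q$-multinomials into $\mathrm{maj}$-generating functions via Corollary \ref{q-bin}. First I would record that $\mathrm{Inv}$ is additive over pairs of positions: since every Inversion of $\boldsymbol T=(T_0,\dots,T_{n-1})$ involves entries in two distinct tableaux, $\mathrm{Inv}(\boldsymbol T)=\sum_{i<j}\mathrm{Inv}(T_i,T_j)$, where $\mathrm{Inv}(A,B)$ denotes the number of Inversions of the pair $(A,B)\in\mathrm{SSTab}(\mu)^2$. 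For an increasing tuple $U=(T_0\le\cdots\le T_{n-1})$ with composition $\rho=(\rho_1,\dots,\rho_r)$ and distinct values $S_1<\cdots<S_r$ this gives
\[
\mathrm{Inv}(U)=\sum_{k}\binom{\rho_k}{2}\,\mathrm{Inv}(S_k,S_k)+\sum_{k<l}\rho_k\rho_l\,\mathrm{Inv}(S_k,S_l).
\]

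The first lemma I need is that $\mathrm{Inv}(S,S)$ equals one and the same integer $a_\mu$ for every $S\in\mathrm{SSTab}(\mu)$, depending only on $\mu$. This holds because each Inversion of $(S,S)$ is forced: if $c(u)=c(v)$, a short lattice path inside $\mu$ (go right along a row, then down a column) shows that every semistandard tableau has a strictly smaller entry at the more north-westerly of $u,v$; and if $c(u)=c(v)+1$, writing $u=(i,j)$, the inequality $S(u)>S(v)$ holds for every $S$ exactly when $v=(i-t,\,j-t+1)$ with $t\ge 1$, and fails for every $S$ otherwise. Hence the numbers of type-(i) and type-(ii) Inversions of $(S,S)$ depend only on $\mu$. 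Using $\sum_k\binom{\rho_k}{2}=\binom{n}{2}-\sum_{k<l}\rho_k\rho_l$, the displayed identity becomes
\[
\mathrm{Inv}(U)=\binom{n}{2}a_\mu+\sum_{k=1}^{r}\rho_k\,k_k(U),\qquad k_k(U):=\sum_{l>k}\rho_l\bigl(\mathrm{Inv}(S_k,S_l)-a_\mu\bigr)\in\mathbb Z.
\]

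Now for the main computation. By Theorem \ref{fermionicformula} and the last identity,
\[
G_{\boldsymbol\mu,\nu}(q)=\sum_{U}q^{\mathrm{Inv}(U)}\left[\begin{array}{c}n\\\rho(U)\end{array}\right]_q=q^{\binom{n}{2}a_\mu}\sum_{U}q^{\,\sum_k\rho_k k_k(U)}\left[\begin{array}{c}n\\\rho(U)\end{array}\right]_q,
\]
the sum running over increasing tuples of shape $\boldsymbol\mu$ and weight $\nu$. Applying Corollary \ref{q-bin} to each summand, with composition $\rho(U)$ and integers $k_1(U),\dots,k_r(U)$, produces a statistic $\alpha'_{\vec k(U)}$ on $\mathrm{Word}(\rho(U))$ such that
\[
q^{\,\sum_k\rho_k k_k(U)}\left[\begin{array}{c}n\\\rho(U)\end{array}\right]_q=\sum_{w\in\mathrm{Word}(\rho(U))}q^{\,n\alpha'_{\vec k(U)}(w)+\mathrm{maj}(w)}.
\]
Each $\boldsymbol T\in\mathrm{SSTab}(\boldsymbol\mu,\nu)$ corresponds bijectively to a pair $(U,w)$, with $U$ the increasing rearrangement of $\boldsymbol T$ and $w\in\mathrm{Word}(\rho(U))$ recording the ranks of $T_0,\dots,T_{n-1}$ among the distinct values of $U$; since this ranking is order preserving, $\boldsymbol T$ and $w$ have their descents in the same positions, hence $\mathrm{maj}(\boldsymbol T)=\mathrm{maj}(w)$. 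Setting $\alpha(\boldsymbol T):=\alpha'_{\vec k(U)}(w)$, we conclude
\[
G_{\boldsymbol\mu,\nu}(q)=\sum_{\boldsymbol T\in\mathrm{SSTab}(\boldsymbol\mu,\nu)}q^{\,n\alpha(\boldsymbol T)+\mathrm{maj}(\boldsymbol T)+\binom{n}{2}a_\mu}.
\]

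It remains to show that the constant $\binom{n}{2}a_\mu$ equals $d_{\boldsymbol\mu}$. The bound $d_{\boldsymbol\mu}\le\binom{n}{2}a_\mu$ is immediate, since the constant tuple $(S,\dots,S)$ has $\mathrm{Inv}=\binom{n}{2}a_\mu$. For the opposite bound, Proposition \ref{keylem} gives $\mathrm{Inv}(\boldsymbol T)=\mathrm{Inv}(U')+\mathrm{inv}(\mathcal F\boldsymbol T)\ge\mathrm{Inv}(U')$, where $U'$ is the increasing rearrangement of $\mathcal F\boldsymbol T$, so it suffices to check $\mathrm{Inv}(U')\ge\binom{n}{2}a_\mu$ for every increasing tuple, which by the identity $\mathrm{Inv}(U)=\binom{n}{2}a_\mu+\sum_k\rho_k k_k(U)$ amounts to the inequality $\mathrm{Inv}(S,S')\ge a_\mu$ whenever $S\le S'$ in $\mathrm{SSTab}(\mu)$. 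This inequality is the step I expect to be the main obstacle: the forced Inversions of $(S,S)$ need not all survive in $(S,S')$, and one must verify that the extra type-(ii) Inversions created by the larger entries of $S'$ compensate for the loss. I would attack it by induction on $|\mu|$, examining the first cell at which $S$ and $S'$ disagree — in the spirit of the local description of the involution $\phi$ recalled after Proposition \ref{labyrinth} — so as to exhibit an injection from the forced Inversions of $(S,S)$ into the Inversions of $(S,S')$. (If only the divisibility $n\mid\bigl(\binom{n}{2}a_\mu-d_{\boldsymbol\mu}\bigr)$ emerges from such an analysis, the theorem still follows, after replacing $\alpha$ by $\alpha+\tfrac{1}{n}\bigl(\binom{n}{2}a_\mu-d_{\boldsymbol\mu}\bigr)$.)
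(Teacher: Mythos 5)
Your reduction of the theorem to Theorem~\ref{fermionicformula} and Corollary~\ref{q-bin} is sound and is in fact the same route the paper takes: your identity $\mathrm{Inv}(U)=\binom{n}{2}a_\mu+\sum_k\rho_k k_k(U)$ is precisely the paper's Lemma~\ref{lemma11} (there written $\mathrm{Inv}(T_0,\dots,T_{n-1})=d_{\boldsymbol{\mu}}+\sum_i k_i\rho_i$, with the within-block and cross-block Inversions counted exactly as you do), and the remaining steps --- applying the statistics $\alpha'_{k_1,k_2,\dots}$ to each $q$-multinomial, matching $\boldsymbol{T}\leftrightarrow(U,w)$ with $\mathrm{maj}(w)=\mathrm{maj}(\boldsymbol{T})$, and setting $\alpha(\boldsymbol{T})=\alpha'_{\vec k}(w)$ --- coincide with the paper's proof. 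Your observation that $\mathrm{Inv}(S,S)=a_\mu$ is independent of $S$ is also correct and is the first step of the paper's Lemma~\ref{lemma11}.

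However, there is a genuine gap exactly where you flag it: you never prove that the constant term you obtain equals $d_{\boldsymbol{\mu}}$, i.e.\ that $\binom{n}{2}a_\mu=\min\{\mathrm{Inv}(\boldsymbol{T})\}$, which for your argument reduces to the inequality $\mathrm{Inv}(S,S')\ge a_\mu$ for $S\le S'$. You only sketch a plan (``induction on $|\mu|$, exhibit an injection''), and this inequality is not an easy consequence of Proposition~\ref{labyrinth}: the involution $\phi$ only compares a pair with its partner, not with the diagonal pairs, and the forced Inversions of $(S,S)$ genuinely need not survive in $(S,S')$, so some real argument is required. The fallback you offer (absorbing the discrepancy into $\alpha$ if $n\mid\binom{n}{2}a_\mu-d_{\boldsymbol{\mu}}$) is conditional on a divisibility you also do not establish, so as written the proposal only yields the theorem with $\binom{n}{2}a_\mu$ in place of $d_{\boldsymbol{\mu}}$. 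The paper closes this exact point not combinatorially but by quotation: by \cite[Lemma.5.2.2.]{HH1} a tuple of minimal Inversion number corresponds to a ribbon tableau of maximal spin, and the ribbon tableau corresponding to the constant tuple $U_n=(T_0,\dots,T_0)$ has maximal spin, whence $d_{\boldsymbol{\mu}}=\binom{n}{2}\bigl(d_1(\mu)+d_2(\mu)\bigr)$, which is your $\binom{n}{2}a_\mu$. Either supply a proof of $\mathrm{Inv}(S,S')\ge a_\mu$ (or of the divisibility), or invoke the spin/Inversion correspondence as the paper does; without one of these the identification of the constant $d_{\boldsymbol{\mu}}$ is missing.
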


we use some facts to prove this theorem.

\begin{lemma}
Let $T_0 \leq \cdots \leq T_{n-1} \in \mathrm{SSTab}(\mu)$
   and $\rho$ be the composition determined by
   $T_0=\cdots=T_{\rho_1 -1} < T_{\rho_1}=\cdots=T_{\rho_1+\rho_2-1} < T_{\rho_1+\rho_2}= \cdots.$
Then there are $k_1,k_2,\cdots \in \mathbb{Z}$ satisfying
$$ \mathrm{Inv}(T_0,\cdots,T_{n-1}) = d_{\boldsymbol{\mu}} + k_1\rho_1 + k_2\rho_2 + \cdots ,$$
where $d_{\boldsymbol{\mu}}$ is the minimum Inversion number $($see $\mathrm{Theorem}.\ref{mainthm})$.
\label{lemma11}
\end{lemma}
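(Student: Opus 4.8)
The plan is to separate $\mathrm{Inv}(T_0,\ldots,T_{n-1})$ into a part that does not change within a block of equal tableaux --- which will be identified with $d_{\boldsymbol{\mu}}$ --- and a remainder that is visibly an integer combination of $\rho_1,\rho_2,\ldots$. For semistandard tableaux $S,S'$ of shape $\mu$ write $\mathrm{Inv}(S,S')$ for the number of Inversions of the pair $(S,S')$ with $S$ in the earlier slot, and let $V_1<V_2<\cdots<V_m$ be the distinct tableaux among $T_0\le\cdots\le T_{n-1}$, so that $V_p$ occurs $\rho_p$ times. An Inversion of $(T_0,\ldots,T_{n-1})$ involves exactly two of the tableaux, and whether it is an Inversion depends only on the relative order of the two positions; so, counting pairs of positions by which blocks they lie in,
\[
\mathrm{Inv}(T_0,\ldots,T_{n-1})=\sum_{p}\binom{\rho_p}{2}\mathrm{Inv}(V_p,V_p)+\sum_{p<q}\rho_p\rho_q\,\mathrm{Inv}(V_p,V_q).
\]

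First I would show that $\mathrm{Inv}(S,S)$ equals a constant $c_{\mu}$ depending only on $\mu$. Its Inversions are the cell-pairs $(u,v)$ with $c(u)=c(v)$ and $S(u)>S(v)$, together with those with $c(u)=c(v)-1$ and $S(u)>S(v)$. Since entries strictly increase toward the bottom-right along any diagonal of a semistandard tableau, a pair of the first kind is an Inversion exactly when $u$ lies below-right of $v$ on their common diagonal; and chaining the weak row inequalities with the strict column inequalities of $S$ shows that a pair of the second kind is an Inversion exactly when $\mathrm{row}(u)>\mathrm{row}(v)$. Both conditions are on cells only, so $\mathrm{Inv}(S,S)=c_{\mu}$ for every $S$; in particular $\mathrm{Inv}(S,\ldots,S)=\binom n2 c_{\mu}$, and hence $d_{\boldsymbol{\mu}}\le\binom n2 c_{\mu}$.

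Next I would prove $d_{\boldsymbol{\mu}}=\binom n2 c_{\mu}$, i.e.\ that the minimum is attained at the constant tuples. It is enough to show $\mathrm{Inv}(A,B)\ge c_{\mu}$ for all semistandard $A,B$ of shape $\mu$, for then $\mathrm{Inv}(\boldsymbol{U})=\sum_{a<b}\mathrm{Inv}(U_a,U_b)\ge\binom n2 c_{\mu}$ for every $\boldsymbol{U}\in\mathrm{SSTab}(\boldsymbol{\mu})$. This is the step I expect to be the main obstacle. I would attack it by exhibiting $c_{\mu}$ distinct Inversions of $(A,B)$ indexed by the cell-pairs that give the canonical Inversions of $(S,S)$: whenever such a cell-pair fails to be an Inversion of $(A,B)$, the row and column inequalities of $A$ and $B$ force a designated nearby cell-pair to be an Inversion instead --- for instance, if a type-(ii) cell-pair $(u,v)$ with $u$ directly below $v$ fails because $B(u)\le A(v)$, then $A(v)\ge B(u)>B(v)$ makes $(v,v)$ a type-(i) Inversion --- and one arranges the substitutes to remain pairwise distinct. (Proposition \ref{labyrinth} helps here: its involution $\phi$ strictly lowers $\mathrm{Inv}$ on any pair whose parts are in strictly decreasing order, so any $\mathrm{Inv}$-minimizing pair is weakly increasing.)

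Finally, combining the display with $\sum_p\binom{\rho_p}{2}+\sum_{p<q}\rho_p\rho_q=\binom n2$ gives
\[
\mathrm{Inv}(T_0,\ldots,T_{n-1})=\binom n2 c_{\mu}+\sum_{p<q}\rho_p\rho_q\bigl(\mathrm{Inv}(V_p,V_q)-c_{\mu}\bigr)=d_{\boldsymbol{\mu}}+\sum_{p}k_p\rho_p,
\]
where $k_p:=\sum_{q>p}\rho_q\bigl(\mathrm{Inv}(V_p,V_q)-c_{\mu}\bigr)\in\mathbb{Z}$, which is the asserted decomposition.
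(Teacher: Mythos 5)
Your block decomposition of $\mathrm{Inv}$, your proof that $\mathrm{Inv}(S,S)$ is a constant $c_{\mu}$ depending only on $\mu$, and the final algebraic rearrangement are all correct and run parallel to the paper's own computation (the paper records the constant as $d_1(\mu)+d_2(\mu)$). The genuine gap is the step you yourself flag as the main obstacle: the identification $d_{\boldsymbol{\mu}}=\binom{n}{2}c_{\mu}$, equivalently the inequality $\mathrm{Inv}(A,B)\ge c_{\mu}$ for all $A,B\in\mathrm{SSTab}(\mu)$. You only sketch an injection from the canonical Inversions of $(S,S)$ into the Inversions of $(A,B)$, and the sketch does not close: the one substitution rule you give is many-to-one (two failed type-(ii) pairs $(u_1,v)$ and $(u_2,v)$ sharing the cell $v$ are both sent to the type-(i) Inversion $(v,v)$), failed canonical type-(i) pairs are given no designated substitute at all, and the reduction via Proposition~\ref{labyrinth} to weakly increasing pairs does not by itself produce the count. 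Without this inequality you do not know $d_{\boldsymbol{\mu}}$, and the conclusion of the lemma --- that $\mathrm{Inv}-d_{\boldsymbol{\mu}}$ lies in $\mathbb{Z}\rho_1+\mathbb{Z}\rho_2+\cdots$ --- is not established; for instance for the constant tuple, where $\rho=(n)$, you would need $\binom{n}{2}c_{\mu}\equiv d_{\boldsymbol{\mu}} \pmod n$, on which you have no handle.

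The paper closes exactly this point by quotation rather than by a new combinatorial argument: by \cite[Lemma.5.2.2.]{HH1}, tuples with minimal Inversion number correspond to ribbon tableaux of maximal spin, and the ribbon tableau corresponding to the constant tuple $(T_0,\ldots,T_0)$ has maximal spin, whence $d_{\boldsymbol{\mu}}=\binom{n}{2}\bigl(d_1(\mu)+d_2(\mu)\bigr)=\binom{n}{2}c_{\mu}$. If you either invoke that fact or genuinely complete your injection (specifying substitutes for both failure types and proving global injectivity), the remainder of your argument goes through and is in fact a slightly cleaner write-up of the paper's final manipulation.
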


\begin{proof}
First we count the inversion numbers of $U_k=(\underbrace{T_0,\cdots,T_0}_{k})$ for some $T_0 \in \mathrm{SSTab}(\mu)$.
Then $U_k$ has $\binom{k}{2} d_1(\mu)$ inversions of type(i) in the definition of the Inversion numbers
  and $\binom{k}{2} d_2(\mu)$ Inversions of type(ii),
  where $$d_1(\mu)=\sum_{(i.j) \in \mu} \mathrm{min} (i,j) \, \text{ and } \, d_2(\mu)=\sum_{(i.j) \in \mu} \mathrm{min} (i-1,j). $$
Note that it is independent of the choice of $T_0$.

On the other hand, it is known that if $\boldsymbol{T}$ has the minimal Inversion number,
  then the ribbon tableau corresponding to $\boldsymbol{T}$ has the maximal spin(see \cite[Lemma.5.2.2.]{HH1}).
Moreover the ribbon tableau corresponding to $U_n$ has the maximal spin.
Therefore we have
$$d_{\boldsymbol{\mu}}=\binom{n}{2} ( d_1(\mu) + d_2(\mu) ) .$$

Finally, there are non-negative integers $b_1,b_2,\cdots \in \mathbb{Z}_{\geq 0}$ satisfying
\begin{align*}
  \mathrm{Inv}(T_0,\cdots,T_{n-1})
   &= \sum_i b_i \rho_i + \sum_i \binom{\rho_i}{2} ( d_1(\mu) + d_2(\mu) ) \\
   &= \sum_i b_i \rho_i + ( \sum_i \binom{\rho_i}{2} - \binom{n}{2} ) ( d_1(\mu) + d_2(\mu) ) + d_{\boldsymbol{\mu}} .
\end{align*}

But, there are integers $b_1',b_2',\cdots \in \mathbb{Z}$ satisfying
$$ \sum_i \binom{\rho_i}{2} - \binom{n}{2} = \sum_i b_i' \rho_i . $$
\end{proof}

\vspace{2em}

\begin{proof}[Proof of Theorem.\ref{mainthm}.]
From Corollary.\ref{fermionicformula}, we have
$$ G_{\boldsymbol{\mu},\nu}(q) = \sum_{T_0 \leq \cdots \leq T_{n-1}} q^{\mathrm{Inv}(T_0, \cdots ,T_{n-1})}
  \biggl[ \begin{array}{c} n \\ \rho \end{array} \biggr]_q ,$$
where $\rho$ be the composition determined from $T_0 \leq \cdots \leq T_{n-1}$.
By Lemma.\ref{lemma11}, there are positive integers $k_1,k_2,\cdots$ satisfying
$ \mathrm{Inv}(T_0,\cdots,T_{n-1}) = d_{\boldsymbol{\mu}} + k_1\rho_1 + k_2\rho_2 + \cdots .$
Thus by Corollary.\ref{q-bin}, there is a statistics $\alpha'_{k_1,k_2,\ldots} \colon \mathrm{Word}(\rho) \rightarrow \mathbb{Z}$ satisfying
\begin{align*}
G_{\boldsymbol{\mu},\nu}(q)
  &= \sum_{T_0 \leq \cdots \leq T_{n-1}} \sum_{w \in \mathrm{Word}(\rho)} 
     q^{n \alpha'_{k_1,k_2,\ldots}(w) + \mathrm{maj}(w) + d_{\boldsymbol{\mu}} } .
\end{align*}

For each $\boldsymbol{T} \in \mathrm{SSTab}(\boldsymbol{\mu},\nu)$, 
we define $\alpha(\boldsymbol{T})$ is defined as follows;

Let $T_0' \leq \cdots \leq T_{n-1}'$ be the rearrangement of the components of $\boldsymbol{T}$.
Let $\rho$ be the composition determined from $T_0' \leq \cdots \leq T_{n-1}'$.
Let $k_1,k_2,\cdots$ be the integers satisfying
$ \mathrm{Inv}(T_0',\cdots,T_{n-1}') = d_{\boldsymbol{\mu}} + k_1\rho_1 + k_2\rho_2 + \cdots .$
Let $w \in \mathrm{Word}(\rho)$ be the word determined from $\boldsymbol{T}$.

Then $\mathrm{maj}(w) = \mathrm{maj} (\boldsymbol{T})$, and we define 
\begin{equation*}
\alpha(\boldsymbol{T}) \coloneqq \alpha'_{k_1,k_2,\ldots}(w).
\end{equation*}

Then

\begin{align*}
G_{\boldsymbol{\mu},\nu}(q)
  &= \sum_{\boldsymbol{T} \in \mathrm{SSTab}(\boldsymbol{\mu},\nu)}
     q^{n \alpha(\boldsymbol{T}) + \mathrm{maj}(\boldsymbol{T}) + d_{\boldsymbol{\mu}} } .
\end{align*}

\end{proof}

\vspace{1em}

Let $G_{\boldsymbol{\mu},\nu}(q)=\sum_{j \geq 0} a_j q^j$ be the LLT coefficient.
For $0 \leq i \leq n-1$, set 
$$ G_{\boldsymbol{\mu},\nu}^{(i)}(q) \coloneqq  \sum_{j \geq 0} a_{jn+i+d_{\boldsymbol{\mu}}} q^{jn+i+d_{\boldsymbol{\mu}}} .$$ 
From the definition it is clear that $G_{\boldsymbol{\mu},\nu}(q) = \sum_{i=0}^{n-1} G_{\boldsymbol{\mu},\nu}^{(i)}(q) $.
Moreover from Theorem.\ref{mainthm}, 
$$ G_{\boldsymbol{\mu},\nu}^{(i)}(q) 
   = \sum_{\substack{\boldsymbol{T} \in \mathrm{SSTab}(\boldsymbol{\mu},\nu) \\ \mathrm{maj}(\boldsymbol{T}) \equiv i \, \mathrm{mod} \,n}}
     q^{n \alpha(\boldsymbol{T}) + i + d_{\boldsymbol{\mu}}} . $$

\begin{example}
\rm{Let} $\boldsymbol{\mu} = ((2),(2),(2))$ and $\nu=(4\,2)$.
Then $d_{\boldsymbol{\mu}}=0$ and $G_{\boldsymbol{\mu},\nu}(q)=1+q+2q^2+q^3+q^4$.
Thus, 
$$ G_{\boldsymbol{\mu},\nu}^{(0)}(q)=1+q^3 ,\, G_{\boldsymbol{\mu},\nu}^{(2)}(q)=q+q^4 ,\, 
   G_{\boldsymbol{\mu},\nu}^{(2)}(q)=2q^2 .$$ 
\end{example}

%%%%%%%%%%%%%%%%%%%%%%%
%
% \widetilde{LR}_{\boldsymbol{\mu},\nu}^{(i)}(q) and representation theory
%
%%%%%%%%%%%%%%%%%%%%%%%

\section{$\widetilde{LR}_{\boldsymbol{\mu},\nu}^{(i)}(q)$ and representation theory}

Let $\mathfrak{S}_n$ be the $n$-th symmetric group, and fix an $n$-cycle $\gamma \in \mathfrak{S}_n$.
$($for example $\gamma=(1\,2\,\cdots\,n))$.
Let $\zeta_n \in \mathbb{C}$ be a primitive $n$-th root of unity.
For an $\mathfrak{S}_n$-module $V$,
  we denote $V[\zeta_n^i]$ by the $\zeta_n^i$-eigenspace of $\gamma$ $(0 \leq i \leq n-1)$.

Since $\mathfrak{S}_n$ acts on $V_\mu^{\otimes n}$ by permutating the components,
 $V_\mu^{\otimes n}[\zeta_n^i]$ is a $GL_N$-submodule of $V_\mu^{\otimes n}$.
We discuss a $q$-analog of the multiplicities in $V_\mu^{\otimes n}[\zeta_n^i]$.
More precisely;
\par $({\mathrm{i}}).$ we introduce the combinatorial object corresponding to
            $\mathrm{SSTab}(\boldsymbol{\mu})$.
\par $({\mathrm{ii}}).$ we define a $q$-analog of the multiplicities of $V_\mu^{\otimes n}[\zeta_n^i]$
              like Definition.\ref{q-LR}, and prove it belong to $\mathbb{Z}_{\geq 0}[q]$.

\vspace{1em}

For $\nu \vdash n |\mu|$ and $0 \leq i \leq n-1$, set
$$ \mathrm{SSTab}(\boldsymbol{\mu},\nu;i) = \{ \boldsymbol{T} \in \mathrm{SSTab}(\boldsymbol{\mu},\nu) |
          \mathrm{maj}(\boldsymbol{T}) \equiv i\,\, \mathrm{mod} \,\,n  \}, $$
and $K_{\boldsymbol{\mu},\nu}^{(i)} \coloneqq  \# \mathrm{SSTab}(\boldsymbol{\mu},\nu;i)$.

\begin{proposition}
Let $\nu \vdash n |\mu|$.
Then
$$ \mathrm{dim}(V_\mu^{\otimes n})[\zeta_n^i](\nu) = K_{\boldsymbol{\mu},\nu}^{(i)}, $$
where for a $GL_N$-module $V$, $V(\lambda)$ denotes the $\nu$-weight space of $V$.
\label{append2}
\end{proposition}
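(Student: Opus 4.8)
The plan is to identify the $\nu$-weight space $(V_\mu^{\otimes n})(\nu)$ with a permutation module for the cyclic group $\langle\gamma\rangle$ and then to apply the cyclic sieving phenomenon for $q$-multinomial coefficients. For each weight $\lambda$ fix a basis of $V_\mu(\lambda)$ indexed by $\mathrm{SSTab}(\mu,\lambda)$ (possible since $\dim V_\mu(\lambda)=K_{\mu,\lambda}=\#\mathrm{SSTab}(\mu,\lambda)$, and $\mathrm{SSTab}(\mu)$ is finite because entries are $\le N$); the Gelfand--Tsetlin basis is one such choice. Writing $v_S$ for the basis vector attached to $S\in\mathrm{SSTab}(\mu)$ (of weight $wt(S)$), the vectors $v_{T_0}\otimes\cdots\otimes v_{T_{n-1}}$, indexed by $(T_0,\dots,T_{n-1})\in\mathrm{SSTab}(\boldsymbol{\mu})$, form a basis of $V_\mu^{\otimes n}$ permuted by $\mathfrak{S}_n$; the $\nu$-weight space $(V_\mu^{\otimes n})(\nu)$ is the span of those indexed by $\mathrm{SSTab}(\boldsymbol{\mu},\nu)$, a $\langle\gamma\rangle$-stable subspace on which $\gamma$ acts by the cyclic rotation $(T_0,\dots,T_{n-1})\mapsto(T_{n-1},T_0,\dots,T_{n-2})$. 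By the eigenspace projection formula for this permutation action,
\begin{equation*}
\dim(V_\mu^{\otimes n})[\zeta_n^i](\nu)=\frac1n\sum_{j=0}^{n-1}\zeta_n^{-ij}\,\#\{\boldsymbol{T}\in\mathrm{SSTab}(\boldsymbol{\mu},\nu)\mid\gamma^j\boldsymbol{T}=\boldsymbol{T}\}.
\end{equation*}

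Next I would evaluate the fixed-point counts. Partition $\mathrm{SSTab}(\boldsymbol{\mu},\nu)$ according to the multiset content $\rho$ of the tuple (a function on the totally ordered set $\mathrm{SSTab}(\mu)$ with $|\rho|=n$ and $\sum_S\rho(S)\,wt(S)=\nu$); rotation preserves each block, which is exactly the set of words of content $\rho$ over the alphabet $\mathrm{SSTab}(\mu)$. The cyclic sieving phenomenon for $q$-multinomials (Reiner--Stanton--White), together with Theorem \ref{Foata} which identifies the $q$-multinomial $\binom{n}{\rho}_q$ with $\sum_w q^{\mathrm{maj}(w)}$, gives for every such $\rho$ that the number of words of content $\rho$ fixed by $\gamma^j$ equals $\binom{n}{\rho}_q\big|_{q=\zeta_n^j}$; summing over all admissible $\rho$ then yields
\begin{equation*}
\#\{\boldsymbol{T}\in\mathrm{SSTab}(\boldsymbol{\mu},\nu)\mid\gamma^j\boldsymbol{T}=\boldsymbol{T}\}=\Bigl(\sum_{\boldsymbol{T}\in\mathrm{SSTab}(\boldsymbol{\mu},\nu)}q^{\mathrm{maj}(\boldsymbol{T})}\Bigr)\Big|_{q=\zeta_n^j}.
\end{equation*}
If one prefers not to invoke cyclic sieving, this identity can be proved by hand: a word is fixed by $\gamma^j$ iff it is $\gcd(j,n)$-periodic, so the fixed-point count is $\binom{\gcd(j,n)}{\rho'}$ when $\rho=\tfrac{n}{\gcd(j,n)}\rho'$ and $0$ otherwise, which matches the value of $\binom{n}{\rho}_q$ at $\zeta_n^j$ via the cyclotomic factorization of $q$-multinomials ($q$-Lucas).

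Finally I would substitute and simplify. Writing $\sum_{\boldsymbol{T}}q^{\mathrm{maj}(\boldsymbol{T})}=\sum_{k\ge0}c_kq^k$ with $c_k=\#\{\boldsymbol{T}\in\mathrm{SSTab}(\boldsymbol{\mu},\nu)\mid\mathrm{maj}(\boldsymbol{T})=k\}$, the right-hand side of the first display becomes
\begin{equation*}
\frac1n\sum_{j=0}^{n-1}\zeta_n^{-ij}\sum_{k\ge0}c_k\zeta_n^{jk}=\sum_{k\equiv i\ (\mathrm{mod}\ n)}c_k=\#\mathrm{SSTab}(\boldsymbol{\mu},\nu;i)=K_{\boldsymbol{\mu},\nu}^{(i)},
\end{equation*}
which is the assertion. (This is consistent with Theorem \ref{mainthm}, in which $K_{\boldsymbol{\mu},\nu}^{(i)}$ is identified with the coefficient of $x^\nu$ in $G_{\boldsymbol{\mu}}^{(i)}(x;1)$; so the proposition says precisely that $G_{\boldsymbol{\mu}}^{(i)}(x;1)$ is the character of $V_\mu^{\otimes n}[\zeta_n^i]$.)

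The main obstacle is the middle step, the fixed-point formula: its content is exactly the cyclic sieving phenomenon for multiset permutations under rotation with the major index, and the cleanest presentation either quotes Reiner--Stanton--White or proves the root-of-unity evaluation of $q$-multinomials directly from their cyclotomic factorization. A minor point of care is the convention for the $\mathfrak{S}_n$-action (the choice $\gamma$ versus $\gamma^{-1}$ and the indexing of the tuple); since $\#\{\boldsymbol{T}\mid\gamma^j\boldsymbol{T}=\boldsymbol{T}\}=\#\{\boldsymbol{T}\mid\gamma^{-j}\boldsymbol{T}=\boldsymbol{T}\}$, the eigenspace dimensions are symmetric under $i\mapsto-i$ and this ambiguity does not affect the final count.
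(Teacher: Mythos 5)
Your argument is correct, but it follows a genuinely different route from the paper. You work directly inside the $\nu$-weight space: you observe it is a permutation module for $C_n=\langle\gamma\rangle$ on the basis indexed by $\mathrm{SSTab}(\boldsymbol{\mu},\nu)$, apply the character-theoretic eigenspace formula $\dim(V_\mu^{\otimes n})[\zeta_n^i](\nu)=\frac1n\sum_j\zeta_n^{-ij}\#\mathrm{Fix}(\gamma^j)$, evaluate the fixed points blockwise by content $\rho$ via the cyclic sieving phenomenon for $\bigl[\begin{smallmatrix} n\\ \rho\end{smallmatrix}\bigr]_q$ (equivalently the $q$-Lucas evaluation at roots of unity, since $\gamma^j$-fixed words are exactly the $\gcd(j,n)$-periodic ones), convert to the maj generating function through Theorem \ref{Foata}, and finish with a roots-of-unity filter; each of these steps is sound, and your remark that the $i\mapsto -i$ ambiguity is harmless is also correct. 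The paper instead proves the proposition representation-theoretically: it applies Schur--Weyl duality to write $V_\mu^{\otimes n}[\zeta_n^i](\nu)\simeq\bigoplus_{\lambda\vdash n}S^\lambda[\zeta_n^i]\otimes V_{\lambda[\mu]}(\nu)$, uses the Kra\'skiewicz--Weyman theorem (Corollary \ref{majorcor}) to get $\dim S^\lambda[\zeta_n^i]=K_\lambda^{(i)}$, uses Proposition \ref{plethysmthm} from \cite{I} for $\dim V_{\lambda[\mu]}(\nu)$, and then sums $\sum_\lambda K_\lambda^{(i)}K_{\lambda[\mu],\nu}=K_{\boldsymbol{\mu},\nu}^{(i)}$ via the maj-preserving Robinson--Schensted correspondence. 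Your approach buys independence from Schur--Weyl duality, from Kra\'skiewicz--Weyman, and from the plethysm result of \cite{I}, at the cost of importing the Reiner--Stanton--White sieving result (or a $q$-Lucas computation); the paper's approach reuses machinery it needs anyway for Proposition \ref{append1} and the plethysm interpretation in Section 7, and it makes the decomposition into the pieces $V_{S[\mu]}$ visible, which is the structural point of that section. It is also worth noting that the Kra\'skiewicz--Weyman theorem and the sieving statement you invoke are close relatives (one can be derived from the other together with RSK), so the two proofs ultimately rest on the same combinatorial fact, packaged differently.
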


We postpone the proof for section 7.

\vspace{1em}

% Now, we define a $q$-analog of the multiplicities of $V_\mu^{\otimes n}[\zeta_n^i]$, and prove it belong to $\mathbb{Z}_{\geq 0}[q]$.
By Proposition.\ref{append2} and Theorem.\ref{mainthm},
   $G_{\boldsymbol{\mu},\nu}^{(i)}(q)$ is a $q$-analog of $\mathrm{dim}(V_\mu^{\otimes n})[\zeta_n^i](\nu)$.
So, $\widetilde{LR}_{\boldsymbol{\mu},\nu}^{(i)}(q) \coloneqq
       \sum_{\rho \vdash n}K_{\rho,\nu}^{-1}G_{\boldsymbol{\mu},\rho}^{(i)}(q)$
   is a $q$-analog of $[(V_\mu^{\otimes n})[\zeta_n^i] \colon V_\nu]$.

The next Corollary is clear from Theorem.\ref{positivity} because each coefficient which differs from $0$ in
  $\widetilde{LR}_{\boldsymbol{\mu},\nu}^{(i)}(q)$ is equal to the coefficient of same degree in
    $\widetilde{LR}_{\mu^{(0)},\cdots,\mu^{(n-1)}}^{\nu}(q)$.

\begin{corollary}
For $0 \leq i \leq n-1$,
$$ \widetilde{LR}_{\boldsymbol{\mu},\nu}^{(i)}(q) \in \mathbb{Z}_{\geq 0}[q] . $$
\end{corollary}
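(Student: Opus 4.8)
The plan is to deduce this from the positivity Theorem \ref{positivity} by showing that, for each fixed $i$, the polynomial $\widetilde{LR}_{\boldsymbol{\mu},\nu}^{(i)}(q)$ is exactly the part of $\widetilde{LR}_{\mu^{(0)},\cdots,\mu^{(n-1)}}^{\nu}(q)$ supported in a single residue class of exponents modulo $n$; since those residue classes are disjoint for distinct $i$, no cancellation occurs between the pieces, and each coefficient of $\widetilde{LR}_{\boldsymbol{\mu},\nu}^{(i)}(q)$ is either $0$ or a coefficient of $\widetilde{LR}_{\mu^{(0)},\cdots,\mu^{(n-1)}}^{\nu}(q)$, hence non-negative.

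Concretely, I would first record that from $G_{\boldsymbol{\mu},\rho}(q)=\sum_{i=0}^{n-1}G_{\boldsymbol{\mu},\rho}^{(i)}(q)$ and the identity noted after Definition \ref{q-LR} one gets
\[
\sum_{i=0}^{n-1}\widetilde{LR}_{\boldsymbol{\mu},\nu}^{(i)}(q)
 =\sum_{\rho\vdash n}K_{\rho,\nu}^{-1}\sum_{i=0}^{n-1}G_{\boldsymbol{\mu},\rho}^{(i)}(q)
 =\sum_{\rho\vdash n}K_{\rho,\nu}^{-1}G_{\boldsymbol{\mu},\rho}(q)
 =\widetilde{LR}_{\mu^{(0)},\cdots,\mu^{(n-1)}}^{\nu}(q).
\]
Next I would observe that $G_{\boldsymbol{\mu},\rho}^{(i)}(q)=\sum_{j\geq 0}a_{jn+i+d_{\boldsymbol{\mu}}}q^{jn+i+d_{\boldsymbol{\mu}}}$ involves only exponents congruent to $i+d_{\boldsymbol{\mu}}$ modulo $n$, and that the shift $d_{\boldsymbol{\mu}}$ depends only on $\boldsymbol{\mu}$ and not on the weight $\rho$. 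Hence, for fixed $i$, every term $K_{\rho,\nu}^{-1}G_{\boldsymbol{\mu},\rho}^{(i)}(q)$ is supported in exponents $\equiv i+d_{\boldsymbol{\mu}}\pmod n$, and therefore so is $\widetilde{LR}_{\boldsymbol{\mu},\nu}^{(i)}(q)$.

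Since $i+d_{\boldsymbol{\mu}}$, $0\leq i\leq n-1$, run through $n$ distinct residues modulo $n$, the displayed decomposition of $\widetilde{LR}_{\mu^{(0)},\cdots,\mu^{(n-1)}}^{\nu}(q)$ has summands with pairwise disjoint supports. Consequently the coefficient of $q^m$ in $\widetilde{LR}_{\boldsymbol{\mu},\nu}^{(i)}(q)$ equals the coefficient of $q^m$ in $\widetilde{LR}_{\mu^{(0)},\cdots,\mu^{(n-1)}}^{\nu}(q)$ when $m\equiv i+d_{\boldsymbol{\mu}}\pmod n$, and is $0$ otherwise; in either case it is $\geq 0$ by Theorem \ref{positivity}, which gives $\widetilde{LR}_{\boldsymbol{\mu},\nu}^{(i)}(q)\in\mathbb{Z}_{\geq 0}[q]$. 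There is no real obstacle here once Theorem \ref{mainthm} is available: the only point requiring care is that the exponent shift $d_{\boldsymbol{\mu}}$ is independent of $\rho$, which is precisely what prevents the (possibly negative) inverse Kostka numbers $K_{\rho,\nu}^{-1}$ from mixing different residue classes; the genuine content, positivity of $\widetilde{LR}_{\mu^{(0)},\cdots,\mu^{(n-1)}}^{\nu}(q)$, is imported from \cite{L}.
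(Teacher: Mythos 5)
Your argument is correct and is essentially the paper's own: the paper justifies the corollary by the one-line remark that every nonzero coefficient of $\widetilde{LR}_{\boldsymbol{\mu},\nu}^{(i)}(q)$ coincides with the coefficient of the same degree in $\widetilde{LR}_{\mu^{(0)},\cdots,\mu^{(n-1)}}^{\nu}(q)$, which is exactly your residue-class-support observation (each $G_{\boldsymbol{\mu},\rho}^{(i)}$ lives in exponents $\equiv i+d_{\boldsymbol{\mu}} \bmod n$ with $d_{\boldsymbol{\mu}}$ independent of $\rho$), combined with Theorem \ref{positivity}. You have merely spelled out the details the paper leaves implicit.
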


%%%%%%%%%%%%%%%%%%%%%%%
%
% q-analog of a sum of the plethysm multiplicities
%
%%%%%%%%%%%%%%%%%%%%%%%

\section{$q$-analog of a sum of the plethysm multiplicities}

\subsection{standard tableau and its major index}

We review the standard tableaux.

Let $\lambda$ be a partition of $n$.
As usual, a \textit{standard tableau} is a Young tableau which contains each number $1,2,\cdots,n$ exactly once.
Let STab$(\lambda)$ denotes the set of standard tableaux of shape $\lambda$, 
   and set $\mathrm{STab}(n) = \bigsqcup_{\lambda \vdash n} \mathrm{STab}(\lambda)$ be the set of standard tableaux whose size are equal to $n$.

\begin{definition}
Let $S \in \mathrm{STab}(n)$ be a standard tableau.
A \textit{decent} of $S$ is an integer $i$, $1 \leq i \leq n-1$, 
  for which the row of the cell filled by $i+1$ is strictly below that of $i$.
A major index of $S$ is defined to be the sum of its descents.
\end{definition}

The relation between the major index of a word defined in Definition.\ref{major} and the major index defined the above is as follows.
Let $w \in \Gamma^n$.
Applying the Robinson-Schensted algorithm to $w$ with respect to the total order on $\Gamma$,  
we get $(P,Q) \in \mathrm{SSTab}(\lambda)\times \mathrm{STab}(\lambda)$ for some $\lambda \vdash n$.
We write $w \leftrightarrow (P,Q)$.
Then it is easy to check $\mathrm{maj}(w) = \mathrm{maj}(Q) ($cf.\cite{BS}$)$.

\begin{example}
\rm{Let} $w=12142$.
By applying Robinson-Schensted correspondence, we get $w \leftrightarrow (P,Q) = %WinTpicVersion3.08
\unitlength 0.1in
\begin{picture}( 18 , 3.5  )(  2.45 , -4.5 )
% BOX 2 0 3 0
% 2 400 600 800 200
% 
\special{pn 8}%
\special{pa 400 600}%
\special{pa 800 600}%
\special{pa 800 200}%
\special{pa 400 200}%
\special{pa 400 600}%
\special{fp}%
% BOX 2 0 3 0
% 2 400 400 1000 200
% 
\special{pn 8}%
\special{pa 400 400}%
\special{pa 1000 400}%
\special{pa 1000 200}%
\special{pa 400 200}%
\special{pa 400 400}%
\special{fp}%
% LINE 2 0 3 0
% 2 600 600 600 200
% 
\special{pn 8}%
\special{pa 600 600}%
\special{pa 600 200}%
\special{fp}%
% STR 2 0 3 0
% 3 500 400 500 500 5 0
% 2
\put(5.0000,-5.0000){\makebox(0,0){2}}%
% STR 2 0 3 0
% 3 700 400 700 500 5 0
% 4
\put(7.0000,-5.0000){\makebox(0,0){4}}%
% STR 2 0 3 0
% 3 500 200 500 300 5 0
% 1
\put(5.0000,-3.0000){\makebox(0,0){1}}%
% STR 2 0 3 0
% 3 700 200 700 300 5 0
% 1
\put(7.0000,-3.0000){\makebox(0,0){1}}%
% STR 2 0 3 0
% 3 900 200 900 300 5 0
% 2
\put(9.0000,-3.0000){\makebox(0,0){2}}%
% BOX 2 0 3 0
% 2 1200 600 1600 200
% 
\special{pn 8}%
\special{pa 1200 600}%
\special{pa 1600 600}%
\special{pa 1600 200}%
\special{pa 1200 200}%
\special{pa 1200 600}%
\special{fp}%
% BOX 2 0 3 0
% 2 1200 400 1800 200
% 
\special{pn 8}%
\special{pa 1200 400}%
\special{pa 1800 400}%
\special{pa 1800 200}%
\special{pa 1200 200}%
\special{pa 1200 400}%
\special{fp}%
% LINE 2 0 3 0
% 2 1400 600 1400 200
% 
\special{pn 8}%
\special{pa 1400 600}%
\special{pa 1400 200}%
\special{fp}%
% STR 2 0 3 0
% 3 1300 200 1300 300 5 0
% 1
\put(13.0000,-3.0000){\makebox(0,0){1}}%
% STR 2 0 3 0
% 3 290 300 290 400 5 0
% (
\put(2.9000,-4.0000){\makebox(0,0){(}}%
% STR 2 0 3 0
% 3 1090 300 1090 400 5 0
% ,
\put(10.9000,-4.0000){\makebox(0,0){,}}%
% STR 2 0 3 0
% 3 1890 300 1890 400 5 0
% )
\put(18.9000,-4.0000){\makebox(0,0){)}}%
% STR 2 0 3 0
% 3 1500 200 1500 300 5 0
% 2
\put(15.0000,-3.0000){\makebox(0,0){2}}%
% STR 2 0 3 0
% 3 1700 200 1700 300 5 0
% 4
\put(17.0000,-3.0000){\makebox(0,0){4}}%
% STR 2 0 3 0
% 3 1300 400 1300 500 5 0
% 3
\put(13.0000,-5.0000){\makebox(0,0){3}}%
% STR 2 0 3 0
% 3 1500 400 1500 500 5 0
% 5
\put(15.0000,-5.0000){\makebox(0,0){5}}%
\end{picture}%$.

\vspace{1em}

Then maj$(w)=2+4=6$ and maj$(Q)=2+4=6$.
\end{example}

%%%%%%%%%%%%%%%%%%%

\subsection{plethysm}

Next we review the plethysm for $GL_N$-modules.

For a given standard tableau $S \in \mathrm{STab}(n)$, 
 $Y_S \in \mathbb{C} \mathfrak{S}_n$ denotes the Young symmetrizer corresponding to $S$.
Let $\mu$ be a Young diagram, and $V_\mu$ be the irreducible $GL_N$-module corresponding to $\mu$.

\begin{definition}
We define a $GL_N$-module $V_{S[\mu]}$ as follows;
$$ V_{S[\mu]} \coloneqq Y_S (V_{\mu}^{\otimes n}) ,$$
  where each element in $\mathfrak{S}_n$ acts on $V_{\mu}^{\otimes n}$ by permutating the components.
\end{definition}

\begin{example}
If $\lambda = (n)$, and $S$ is the standard tableau of shape $(n)$,
then $Y_S=\sum_{\sigma \in \mathfrak{S}_n} \sigma$ is the symmetrizer
  and $ V_{S[\mu]} = \boldsymbol{S}^n (V_\mu)$ is the n-th symmetric product of $V_\mu$. 

Similarly, in the case of $\lambda = (1^n)$,
  $V_{S[\mu]} = \Lambda^n (V_\mu)$ is the n-th exterior product of $V_\mu$.
\end{example}

$\bf{Remark}$.
It is well-known that the $GL_N$-module structure on $V_{S[\mu]}$ depends only on the shape of $S$.
Namely if $S, S' \in \mathrm{STab}(\lambda)$ for some $\lambda \vdash n$, then $V_{S[\mu]} \simeq V_{S'[\mu]}$ as $GL_N$-module.
However we do NOT identify $V_{S[\mu]}$ and $V_{S'[\mu]}$.
Because the $q$-analog, given in the next section, of the multiplicities $V_{S[\mu]}$ 
  do NOT agree with that of $V_{S'[\mu]}$.

\begin{definition}
If 
  $$ V_{S[\mu]} \simeq \bigoplus_{\nu \vdash n |\mu|} V_{\nu}^{\oplus a_{S[\mu]}^{\nu}} $$
  as $GL_N$-module, the numbers $a_{S[\mu]}^{\nu}$ are called plethysm multiplicities.
\end{definition}

Next proposition is also well-known fact$($cf.??$)$.

\begin{proposition}
$$ V_{\mu}^{\otimes n} \simeq \bigoplus_{S \in \mathrm{STab}(n)} V_{S[\mu]}. $$
\end{proposition}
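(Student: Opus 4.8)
The plan is to reduce the statement to the classical structure theory of $\mathbb{C}\mathfrak{S}_n$, which is semisimple because $\mathrm{char}\,\mathbb{C}=0$. First I would write down the isotypic decomposition of $V_\mu^{\otimes n}$ as a module over $\mathfrak{S}_n\times GL_N$ (the two actions commute, since $\mathfrak{S}_n$ permutes the tensor factors while $GL_N$ acts diagonally): by Schur's lemma
\[
V_\mu^{\otimes n}\;\cong\;\bigoplus_{\lambda\vdash n}\mathcal{S}^\lambda\otimes W_\lambda ,
\]
where $\mathcal{S}^\lambda$ is the irreducible $\mathbb{C}\mathfrak{S}_n$-module (Specht module) of dimension $f^\lambda:=\#\mathrm{STab}(\lambda)$ and $W_\lambda:=\mathrm{Hom}_{\mathfrak{S}_n}(\mathcal{S}^\lambda,V_\mu^{\otimes n})$ is a $GL_N$-module with $GL_N$ acting only on the second factor.

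Next I would identify each summand $V_{S[\mu]}=Y_S(V_\mu^{\otimes n})$. By Young's identity $Y_S^2=(n!/f^\lambda)\,Y_S$ for $S\in\mathrm{STab}(\lambda)$, the element $e_S:=(f^\lambda/n!)\,Y_S$ is an idempotent of $\mathbb{C}\mathfrak{S}_n$ generating a left ideal isomorphic to $\mathcal{S}^\lambda$, so $V_{S[\mu]}=e_S(V_\mu^{\otimes n})$. Such a primitive idempotent annihilates every isotypic component of $V_\mu^{\otimes n}$ except the one for $\mathcal{S}^\lambda$, on which it has a one-dimensional image $L_S\subset\mathcal{S}^\lambda$; hence, inside $\mathcal{S}^\lambda\otimes W_\lambda$, we have $V_{S[\mu]}=L_S\otimes W_\lambda$, and therefore $V_{S[\mu]}\cong W_\lambda$ as $GL_N$-modules. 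In particular $\sum_{S\in\mathrm{STab}(n)}\dim V_{S[\mu]}=\sum_{\lambda}f^\lambda\dim W_\lambda=\dim V_\mu^{\otimes n}$, so it remains only to show that the subspaces $V_{S[\mu]}$ together span $V_\mu^{\otimes n}$ and pairwise meet in $0$; equivalently, that for each fixed $\lambda$ the $f^\lambda$ lines $\{L_S:S\in\mathrm{STab}(\lambda)\}$ form a basis of $\mathcal{S}^\lambda$.

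This last point is exactly the classical fact that the Young symmetrizers attached to standard tableaux decompose the group algebra, $\mathbb{C}\mathfrak{S}_n=\bigoplus_{S\in\mathrm{STab}(n)}\mathbb{C}\mathfrak{S}_n Y_S$ as a left $\mathbb{C}\mathfrak{S}_n$-module (Young; see the standard references on the Young symmetrizer and substitutional analysis). Concretely, one fixes a total order on the standard tableaux of each shape for which $Y_S Y_{S'}=0$ whenever $S'$ precedes $S$, and runs the usual recursion to replace the (non-orthogonal) idempotents $e_S$ by a genuine complete system of orthogonal idempotents $\{f_S\}$ with $\mathbb{C}\mathfrak{S}_n f_S=\mathbb{C}\mathfrak{S}_n e_S$ and $\sum_S f_S=1$. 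Then $V_\mu^{\otimes n}=\bigoplus_S f_S(V_\mu^{\otimes n})$, and since $f_S(V_\mu^{\otimes n})\subseteq e_S(V_\mu^{\otimes n})=V_{S[\mu]}$ while $\dim f_S(V_\mu^{\otimes n})=\dim e_S(V_\mu^{\otimes n})$ (both equal the multiplicity of $\mathcal{S}^{\mathrm{sh}(S)}$ in $V_\mu^{\otimes n}$), in fact $f_S(V_\mu^{\otimes n})=V_{S[\mu]}$, giving $V_\mu^{\otimes n}=\bigoplus_{S\in\mathrm{STab}(n)}V_{S[\mu]}$. The only genuine obstacle is this classical input---the existence of the order making $Y_S Y_{S'}=0$ in one direction, equivalently the linear independence of the lines $L_S$; everything else is routine bookkeeping with idempotents, and the $GL_N$-equivariance is automatic since $GL_N$ commutes with the $\mathfrak{S}_n$-action throughout.
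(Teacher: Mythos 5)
The paper offers no proof of this proposition at all---it is recorded as a ``well-known fact'' with an unfinished citation (``cf.??'')---so there is nothing internal to compare against; your argument is a correct reconstruction of the classical proof, and it is consistent with the paper's toolkit, since the isotypic decomposition you start from is exactly the Schur--Weyl-type decomposition the paper itself invokes in the proofs of Proposition \ref{append1} and Proposition \ref{append2}. Two remarks. First, the statement only asserts an isomorphism of $GL_N$-modules, not an internal direct sum decomposition of $V_\mu^{\otimes n}$; hence your first two paragraphs already finish the job: from $V_\mu^{\otimes n}\cong\bigoplus_{\lambda\vdash n}\mathcal{S}^\lambda\otimes W_\lambda$, the fact that $e_S=(f^\lambda/n!)Y_S$ is a primitive idempotent lying in the block of $\lambda=\mathrm{sh}(S)$ gives $V_{S[\mu]}=e_S(V_\mu^{\otimes n})=L_S\otimes W_\lambda\cong W_{\mathrm{sh}(S)}$, and since $\#\mathrm{STab}(\lambda)=\dim\mathcal{S}^\lambda=f^\lambda$ one gets $\bigoplus_S V_{S[\mu]}\cong\bigoplus_\lambda W_\lambda^{\oplus f^\lambda}\cong V_\mu^{\otimes n}$ with no further input; the Young orthogonalization in your last paragraph proves the stronger internal statement, which the paper does not need. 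Second, in that last paragraph there is a left/right bookkeeping slip: the inclusion $f_S(V_\mu^{\otimes n})\subseteq e_S(V_\mu^{\otimes n})$ requires $f_S\in e_S\,\mathbb{C}\mathfrak{S}_n$ (that is, $f_S=e_S x$), whereas the property you record, $\mathbb{C}\mathfrak{S}_n f_S=\mathbb{C}\mathfrak{S}_n e_S$, only places $f_S$ in $\mathbb{C}\mathfrak{S}_n e_S$; this is repaired by running the standard recursion on the appropriate side (or by arguing directly with the lines $L_S$, whose linear independence for fixed shape is the same classical input as the decomposition $\mathbb{C}\mathfrak{S}_n=\bigoplus_S\mathbb{C}\mathfrak{S}_n Y_S$), after which your internal decomposition $V_\mu^{\otimes n}=\bigoplus_{S\in\mathrm{STab}(n)}V_{S[\mu]}$ also stands.
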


%%%%%%%%%%%%%%%%%

\subsection{$q$-analog of a sum of plethysm multiplicities}

We discuss a $q$-analog of the plethysm multiplicities.

We note the following result.

\begin{proposition}
For $0 \leq i \leq n-1$, 
$$ (V_{\mu}^{\otimes n})[\zeta_n^i] \simeq 
  \bigoplus_{S \in \mathrm{STab}(n),\, \mathrm{maj}(S) \equiv i \, \mathrm{mod}\,\,n} V_{S[\mu]}$$
as a $GL_N$-module.
\label{append1}
\end{proposition}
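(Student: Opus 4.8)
The plan is to pass from $V_\mu^{\otimes n}$ to its decomposition as a module over the commuting pair $GL_N \times \mathfrak{S}_n$, take $\gamma$-eigenspaces there, and reduce the statement to a classical fact about the major index on standard tableaux.

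First I would record the bimodule decomposition. By Schur--Weyl type duality (the double centralizer theorem for the commuting actions of $GL_N$ on $V_\mu$ and of $\mathfrak{S}_n$ permuting the $n$ tensor factors), there are $GL_N$-modules $M_\lambda$, $\lambda \vdash n$, with
$$ V_\mu^{\otimes n} \simeq \bigoplus_{\lambda \vdash n} \mathbb{S}^\lambda \otimes M_\lambda $$
as $GL_N \times \mathfrak{S}_n$-modules, where $\mathbb{S}^\lambda$ is the Specht module. Next I would identify $M_\lambda$ with $V_{S[\mu]}$ for any $S \in \mathrm{STab}(\lambda)$: after rescaling, $Y_S$ becomes a primitive idempotent $e_S$ of $\mathbb{C}\mathfrak{S}_n$ lying in the block of $\lambda$, and for any finite-dimensional left $\mathbb{C}\mathfrak{S}_n$-module $W \simeq \bigoplus_{\kappa} \mathbb{S}^\kappa \otimes U_\kappa$ the operator $e_S$ annihilates the factors $\kappa \neq \lambda$ and cuts $\mathbb{S}^\lambda \otimes U_\lambda$ down to a one-dimensional subspace tensored with $U_\lambda$, so $e_S W \simeq U_\lambda$. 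Since $GL_N$ commutes with $e_S$ and $e_S V_\mu^{\otimes n} = Y_S V_\mu^{\otimes n} = V_{S[\mu]}$, this gives $V_{S[\mu]} \simeq M_\lambda$ as $GL_N$-modules; in particular $V_{S[\mu]}$ depends only on the shape of $S$ (which is the Remark above), and since $\dim \mathbb{S}^\lambda = f^\lambda = \#\mathrm{STab}(\lambda)$, counting summands recovers the preceding Proposition.

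Now I would take the $\zeta_n^i$-eigenspace of $\gamma$. Since $\gamma \in \mathfrak{S}_n$ acts only on the Specht factors and commutes with $GL_N$,
$$ (V_\mu^{\otimes n})[\zeta_n^i] \simeq \bigoplus_{\lambda \vdash n} \mathbb{S}^\lambda[\zeta_n^i] \otimes M_\lambda \simeq \bigoplus_{\lambda \vdash n} M_\lambda^{\oplus d_{\lambda,i}}, \qquad d_{\lambda,i} := \dim \mathbb{S}^\lambda[\zeta_n^i]. $$
The essential external input is the theorem of Kra\'skiewicz--Weyman (equivalently a consequence of Springer's theory of regular elements): for the $n$-cycle $\gamma$ and the primitive $n$-th root of unity $\zeta_n$ fixed at the start of this section,
$$ d_{\lambda,i} = \#\{\, S \in \mathrm{STab}(\lambda) \mid \mathrm{maj}(S) \equiv i \pmod n \,\}. $$
Substituting this into the displayed decomposition and regrouping the sum over $\lambda$ into a sum over standard tableaux, using $V_{S[\mu]} \simeq M_\lambda$ for $S$ of shape $\lambda$, yields $(V_\mu^{\otimes n})[\zeta_n^i] \simeq \bigoplus_{S} V_{S[\mu]}$ with $S$ ranging over the standard tableaux in $\mathrm{STab}(n)$ satisfying $\mathrm{maj}(S) \equiv i \pmod n$, which is the assertion.

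The main obstacle is not the structural bookkeeping but correctly marshalling the two nonformal ingredients: first, pinning down that $Y_S V_\mu^{\otimes n}$ really is the $GL_N$-multiplicity space $M_\lambda$ and not merely a space of the right dimension, which requires using that $e_S$ is a \emph{primitive} idempotent in the block of $\lambda$; and second, the Kra\'skiewicz--Weyman eigenvalue identity, which must be cited and whose orientation convention has to be checked against the paper's specific choices of $\gamma = (1\,2\,\cdots\,n)$ and $\zeta_n$, since a mismatch would exchange the residues $i$ and $-i \bmod n$. Both are standard, so once they are in place the proof is short.
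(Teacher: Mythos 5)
Your proposal is correct and follows essentially the same route as the paper: Schur--Weyl duality to split $V_\mu^{\otimes n}$ into $\bigoplus_{\lambda\vdash n} S^\lambda\boxtimes V_{\lambda[\mu]}$, then the Kra\'skiewicz--Weyman result (which the paper packages, via Frobenius reciprocity, as $\dim S^\lambda[\zeta_n^i]=K_\lambda^{(i)}$) to convert eigenspace dimensions into counts of standard tableaux by major index mod $n$. The extra care you take in identifying $Y_S V_\mu^{\otimes n}$ with the multiplicity space via primitive idempotents is exactly the step the paper treats as well known, so it only adds detail, not a different argument.
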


\begin{proof}
From the Schur-Weyl duality, we have
$$ V_\mu^{\otimes n} \simeq \bigoplus_{\lambda \vdash n} S^{\lambda} \boxtimes V_{\lambda [\mu]} $$
as a $\mathfrak{S}_n \times GL_N$-bimodule.
Then by Corollary.\ref{majorcor}, we have
\begin{align*}
 V_\mu^{\otimes n}[\zeta_n^i]
  & \simeq \bigoplus_{\lambda \vdash n} V_{\lambda [\mu]}^{\oplus K_{\lambda}^{(i)}} \\
  & \simeq \bigoplus_{\substack{S \in \mathrm{STab}(n) \\ \mathrm{maj}(S) \equiv i \text{ mod } n}} V_{S[\mu]}
\end{align*}
as a $GL_N$-module.
\end{proof}

\vspace{1em}

Therefore we can interpret $\widetilde{LR}_{\boldsymbol{\mu},\nu}^{(i)}(q)$ as a $q$-analog of a sum of plethysm multiplicities 
  since $\widetilde{LR}_{\boldsymbol{\mu},\nu}^{(i)}(q)$ is a $q$-analog of the multiplicity $[(V_\mu^{\otimes n})[\zeta_n^i] \colon V_\nu]$.

\begin{corollary}
For $0 \leq i \leq n-1$,
$$ \widetilde{LR}_{\boldsymbol{\mu},\nu}^{(i)}(1) = 
   \sum_{\substack{S \in \mathrm{STab}(n) \\ \mathrm{maj}(S) \equiv i \, \mathrm{mod}\,\,n}} a_{S[\mu]}^{\nu} 
   = \sum_{\lambda \vdash n} a_{\lambda[\mu]}^{\nu} K_{\lambda}^{(i)} ,$$
where $K_{\lambda}^{(i)}=\#  \{ S \in \mathrm{STab}(\lambda) \,|\, \mathrm{maj}(S) \equiv i \,\, \mathrm{mod} \,\, n \}$. 
\end{corollary}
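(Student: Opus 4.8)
The plan is to evaluate the defining identity for $\widetilde{LR}_{\boldsymbol{\mu},\nu}^{(i)}(q)$ at $q=1$ and then match the result with the decomposition of Proposition.\ref{append1}. First I would identify what $\widetilde{LR}_{\boldsymbol{\mu},\nu}^{(i)}(1)$ computes. By definition $\widetilde{LR}_{\boldsymbol{\mu},\nu}^{(i)}(q) = \sum_{\rho \vdash n}K_{\rho,\nu}^{-1}G_{\boldsymbol{\mu},\rho}^{(i)}(q)$, and from Theorem.\ref{mainthm} together with Proposition.\ref{append2} one has $G_{\boldsymbol{\mu},\rho}^{(i)}(1) = K_{\boldsymbol{\mu},\rho}^{(i)} = \dim (V_\mu^{\otimes n})[\zeta_n^i](\rho)$. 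Since the character of the $GL_N$-module $(V_\mu^{\otimes n})[\zeta_n^i]$ equals $\sum_{\rho} \dim (V_\mu^{\otimes n})[\zeta_n^i](\rho)\, m_\rho$, expanding each $m_\rho$ in the Schur basis by the inverse Kostka coefficients of Definition.\ref{inverseKostka} gives $[(V_\mu^{\otimes n})[\zeta_n^i] \colon V_\nu] = \sum_{\rho \vdash n}K_{\rho,\nu}^{-1}K_{\boldsymbol{\mu},\rho}^{(i)} = \widetilde{LR}_{\boldsymbol{\mu},\nu}^{(i)}(1)$. Thus the statement to prove is the computation of this genuine multiplicity.

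Next I would invoke Proposition.\ref{append1}, namely $(V_\mu^{\otimes n})[\zeta_n^i] \simeq \bigoplus_{S \in \mathrm{STab}(n),\, \mathrm{maj}(S)\equiv i \, \mathrm{mod}\, n} V_{S[\mu]}$ as $GL_N$-modules. Taking the multiplicity of $V_\nu$ on both sides and using $V_{S[\mu]} \simeq \bigoplus_{\nu} V_\nu^{\oplus a_{S[\mu]}^\nu}$ yields at once $[(V_\mu^{\otimes n})[\zeta_n^i] \colon V_\nu] = \sum_{S \in \mathrm{STab}(n),\, \mathrm{maj}(S)\equiv i} a_{S[\mu]}^\nu$, which is the first equality of the corollary.

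For the second equality I would group the standard tableaux according to their shape. By the Remark following the definition of $V_{S[\mu]}$, the $GL_N$-module structure of $V_{S[\mu]}$, and hence every plethysm multiplicity $a_{S[\mu]}^\nu$, depends only on the shape $\lambda$ of $S$; write $a_{\lambda[\mu]}^\nu$ for this common value. Therefore $\sum_{S \in \mathrm{STab}(n),\, \mathrm{maj}(S)\equiv i} a_{S[\mu]}^\nu = \sum_{\lambda \vdash n} a_{\lambda[\mu]}^\nu\, \#\{S \in \mathrm{STab}(\lambda) \mid \mathrm{maj}(S)\equiv i \, \mathrm{mod}\, n\} = \sum_{\lambda \vdash n} a_{\lambda[\mu]}^\nu K_\lambda^{(i)}$, as desired. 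The whole argument is essentially bookkeeping; the only point requiring care is the first paragraph, where one must check that the inverse-Kostka recombination of weight multiplicities into irreducible multiplicities is compatible with the eigenspace decomposition, i.e. that the $q$-analog really does specialize to the true multiplicity at $q=1$. Once Proposition.\ref{append2} (whose proof is deferred to this section) is available this is the standard $m_\rho = \sum_\nu K_{\rho,\nu}^{-1}s_\nu$ manipulation applied to the character of $(V_\mu^{\otimes n})[\zeta_n^i]$, so no genuine obstacle remains.
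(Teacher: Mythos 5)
Your argument is correct and is essentially the paper's own (largely implicit) reasoning: specialize $\widetilde{LR}_{\boldsymbol{\mu},\nu}^{(i)}(q)$ at $q=1$, identify it via Theorem~\ref{mainthm}, Proposition~\ref{append2} and the inverse Kostka expansion with the multiplicity $[(V_\mu^{\otimes n})[\zeta_n^i]\colon V_\nu]$, then apply Proposition~\ref{append1} and group standard tableaux by shape. The paper simply states this chain of identifications without writing out the $m_\rho=\sum_\nu K_{\rho,\nu}^{-1}s_\nu$ step that you make explicit, so there is no substantive difference.
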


%%%%%%%%%%%%%%%%%%

\subsection{some problems about a $q$-analog of the plethysm multiplicities}

From the result in the previous section, 
  we must separate the polynomial $G_{\boldsymbol{\mu},\nu}^{(i)}(q)$ to get a $q$-analog of the plethysm multiplicities.
Firstly we separate $\mathrm{SSTab}(\boldsymbol{\mu},\nu)$ and determine the combinatorial object.

\begin{definition}
For a Young diagram $\mu$ and $S \in \mathrm{STab}(n)$, 
$$ \mathrm{SSTab}(S[\boldsymbol{\mu}],\nu) \coloneqq 
   \{ \boldsymbol{T} \in \mathrm{SSTab}(\boldsymbol{\mu},\nu) | \, \boldsymbol{T} \leftrightarrow (\,\cdot\,,S) \} , $$
    where $\boldsymbol{T} \leftrightarrow (\,\cdot\,,S)$ means that $\boldsymbol{T}$ corresponds to $(\,\cdot\,,S)$ 
    through the Robinson-Schensted correspondence with respect to the total order described in Definition.\ref{totalorder}. 
\end{definition}

$\bf{Remark.}$
If the shape of $S$ is $\lambda$,
then we can regard $\mathrm{SSTab}(S[\boldsymbol{\mu}],\nu)$ as the set of Young tableau of shape $\lambda$ 
  whose entries are semi-standard tableaux of shape $\mu$ and the sum of the weight are equal to $\nu$ (see \cite{I}).

\begin{example}
\rm{Let}  $S= %WinTpicVersion3.08
\unitlength 0.1in
\begin{picture}(  4.0000,  4.0000)(  2.0000, -3.5000)
% BOX 2 0 3 0
% 2 200 600 400 200
% 
\special{pn 8}%
\special{pa 200 600}%
\special{pa 400 600}%
\special{pa 400 200}%
\special{pa 200 200}%
\special{pa 200 600}%
\special{fp}%
% BOX 2 0 3 0
% 2 200 400 600 200
% 
\special{pn 8}%
\special{pa 200 400}%
\special{pa 600 400}%
\special{pa 600 200}%
\special{pa 200 200}%
\special{pa 200 400}%
\special{fp}%
% STR 2 0 3 0
% 3 300 200 300 300 5 0
% 1
\put(3.0000,-3.0000){\makebox(0,0){1}}%
% STR 2 0 3 0
% 3 500 200 500 300 5 0
% 2
\put(5.0000,-3.0000){\makebox(0,0){2}}%
% STR 2 0 3 0
% 3 300 400 300 500 5 0
% 3
\put(3.0000,-5.0000){\makebox(0,0){3}}%
\end{picture}%$, $\mu=(2)$, and $\nu=(4\,2)$.

\vspace{1em}

Then $\mathrm{SSTab}(\boldsymbol{\mu},\nu)$ has $6$ tableaux, and 
$$ \mathrm{SSTab}(S[\boldsymbol{\mu}],\nu)= \{    %WinTpicVersion3.08
\unitlength 0.1in
\begin{picture}( 18.0000,  2.0000)(  2.5500, -4.0000)
% BOX 2 0 3 0
% 2 400 400 800 200
% 
\special{pn 8}%
\special{pa 400 400}%
\special{pa 800 400}%
\special{pa 800 200}%
\special{pa 400 200}%
\special{pa 400 400}%
\special{fp}%
% LINE 2 0 3 0
% 2 600 400 600 200
% 
\special{pn 8}%
\special{pa 600 400}%
\special{pa 600 200}%
\special{fp}%
% BOX 2 0 3 0
% 2 1000 400 1400 200
% 
\special{pn 8}%
\special{pa 1000 400}%
\special{pa 1400 400}%
\special{pa 1400 200}%
\special{pa 1000 200}%
\special{pa 1000 400}%
\special{fp}%
% BOX 2 0 3 0
% 2 1600 400 2000 200
% 
\special{pn 8}%
\special{pa 1600 400}%
\special{pa 2000 400}%
\special{pa 2000 200}%
\special{pa 1600 200}%
\special{pa 1600 400}%
\special{fp}%
% LINE 2 0 3 0
% 2 1200 400 1200 200
% 
\special{pn 8}%
\special{pa 1200 400}%
\special{pa 1200 200}%
\special{fp}%
% LINE 2 0 3 0
% 2 1800 400 1800 200
% 
\special{pn 8}%
\special{pa 1800 400}%
\special{pa 1800 200}%
\special{fp}%
% STR 2 0 3 0
% 3 300 200 300 300 5 0
% (
\put(3.0000,-3.0000){\makebox(0,0){(}}%
% STR 2 0 3 0
% 3 500 200 500 300 5 0
% 1
\put(5.0000,-3.0000){\makebox(0,0){1}}%
% STR 2 0 3 0
% 3 900 200 900 300 5 0
% ,
\put(9.0000,-3.0000){\makebox(0,0){,}}%
% STR 2 0 3 0
% 3 1100 200 1100 300 5 0
% 1
\put(11.0000,-3.0000){\makebox(0,0){1}}%
% STR 2 0 3 0
% 3 1300 200 1300 300 5 0
% 2
\put(13.0000,-3.0000){\makebox(0,0){2}}%
% STR 2 0 3 0
% 3 1500 200 1500 300 5 0
% ,
\put(15.0000,-3.0000){\makebox(0,0){,}}%
% STR 2 0 3 0
% 3 1700 200 1700 300 5 0
% 1
\put(17.0000,-3.0000){\makebox(0,0){1}}%
% STR 2 0 3 0
% 3 2100 200 2100 300 5 0
% )
\put(21.0000,-3.0000){\makebox(0,0){)}}%
% STR 2 0 3 0
% 3 700 200 700 300 5 0
% 2
\put(7.0000,-3.0000){\makebox(0,0){2}}%
% STR 2 0 3 0
% 3 1900 200 1900 300 5 0
% 1
\put(19.0000,-3.0000){\makebox(0,0){1}}%
\end{picture}% \,\,\,\,,\,\, 
%WinTpicVersion3.08
\unitlength 0.1in
\begin{picture}( 18.0000,  2.0000)(  2.5500, -4.0000)
% BOX 2 0 3 0
% 2 400 400 800 200
% 
\special{pn 8}%
\special{pa 400 400}%
\special{pa 800 400}%
\special{pa 800 200}%
\special{pa 400 200}%
\special{pa 400 400}%
\special{fp}%
% LINE 2 0 3 0
% 2 600 400 600 200
% 
\special{pn 8}%
\special{pa 600 400}%
\special{pa 600 200}%
\special{fp}%
% BOX 2 0 3 0
% 2 1000 400 1400 200
% 
\special{pn 8}%
\special{pa 1000 400}%
\special{pa 1400 400}%
\special{pa 1400 200}%
\special{pa 1000 200}%
\special{pa 1000 400}%
\special{fp}%
% BOX 2 0 3 0
% 2 1600 400 2000 200
% 
\special{pn 8}%
\special{pa 1600 400}%
\special{pa 2000 400}%
\special{pa 2000 200}%
\special{pa 1600 200}%
\special{pa 1600 400}%
\special{fp}%
% LINE 2 0 3 0
% 2 1200 400 1200 200
% 
\special{pn 8}%
\special{pa 1200 400}%
\special{pa 1200 200}%
\special{fp}%
% LINE 2 0 3 0
% 2 1800 400 1800 200
% 
\special{pn 8}%
\special{pa 1800 400}%
\special{pa 1800 200}%
\special{fp}%
% STR 2 0 3 0
% 3 300 200 300 300 5 0
% (
\put(3.0000,-3.0000){\makebox(0,0){(}}%
% STR 2 0 3 0
% 3 500 200 500 300 5 0
% 1
\put(5.0000,-3.0000){\makebox(0,0){1}}%
% STR 2 0 3 0
% 3 700 200 700 300 5 0
% 1
\put(7.0000,-3.0000){\makebox(0,0){1}}%
% STR 2 0 3 0
% 3 900 200 900 300 5 0
% ,
\put(9.0000,-3.0000){\makebox(0,0){,}}%
% STR 2 0 3 0
% 3 1300 200 1300 300 5 0
% 2
\put(13.0000,-3.0000){\makebox(0,0){2}}%
% STR 2 0 3 0
% 3 1500 200 1500 300 5 0
% ,
\put(15.0000,-3.0000){\makebox(0,0){,}}%
% STR 2 0 3 0
% 3 1700 200 1700 300 5 0
% 1
\put(17.0000,-3.0000){\makebox(0,0){1}}%
% STR 2 0 3 0
% 3 2100 200 2100 300 5 0
% )
\put(21.0000,-3.0000){\makebox(0,0){)}}%
% STR 2 0 3 0
% 3 1100 200 1100 300 5 0
% 2
\put(11.0000,-3.0000){\makebox(0,0){2}}%
% STR 2 0 3 0
% 3 1900 200 1900 300 5 0
% 1
\put(19.0000,-3.0000){\makebox(0,0){1}}%
\end{picture}% \,\,\, \} .$$
\end{example}

This is the combinatorial object corresponding to $\mathrm{SSTab}(\boldsymbol{\mu})$ in the case of 
  the $q$-analog of Littlewood-Richardon coefficients.

\begin{proposition}[{\cite[Theorem.3.5.]{I}}] 
\par
For a standard tableau $S \in \mathrm{STab}(n)$,
$$ \mathrm{dim}V_{S[\mu]}(\nu) = \# \mathrm{SSTab}(S[\boldsymbol{\mu}],\nu). $$
\label{plethysmthm}
\end{proposition}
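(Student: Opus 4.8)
The plan is to move everything to the symmetric group side by Schur--Weyl duality, exploiting that $V_\mu^{\otimes n}$ carries a weight basis indexed by tuples of semistandard tableaux of shape $\mu$. First I would fix a weight basis $\{v_U \mid U\in\mathrm{SSTab}(\mu)\}$ of $V_\mu$, with $v_U$ of weight $\mathrm{wt}(U)$ (since we work with $GL_N$, the entries are bounded by $N$, so this alphabet is finite and totally ordered by Definition.\ref{totalorder}). The vectors $v_{U_0}\otimes\cdots\otimes v_{U_{n-1}}$ form a weight basis of $V_\mu^{\otimes n}$, so a basis of the $\nu$-weight space $(V_\mu^{\otimes n})(\nu)$ is indexed precisely by $\mathrm{SSTab}(\boldsymbol{\mu},\nu)$. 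As $\mathfrak{S}_n$ permutes the tensor factors and commutes with $GL_N$, it acts on each weight space, and under this identification $(V_\mu^{\otimes n})(\nu)$ is the permutation module $\mathbb{C}[\mathrm{SSTab}(\boldsymbol{\mu},\nu)]$, with $\mathfrak{S}_n$ permuting the components of a tuple.

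Next, because $Y_S$ is a morphism of $GL_N$-modules it preserves weight spaces, so $V_{S[\mu]}(\nu)=Y_S\big((V_\mu^{\otimes n})(\nu)\big)$; and since $\mathbb{C}\mathfrak{S}_n Y_S\cong S^{\lambda}$ as a left module ($\lambda=\mathrm{sh}(S)$), a nonzero scalar multiple of $Y_S$ is a primitive idempotent in the $\lambda$-block of $\mathbb{C}\mathfrak{S}_n$, whence $\dim Y_S(M)=[M:S^{\lambda}]$ for every $\mathfrak{S}_n$-module $M$. Taking $M=(V_\mu^{\otimes n})(\nu)$ gives
\[
\dim V_{S[\mu]}(\nu)=\big[\,\mathbb{C}[\mathrm{SSTab}(\boldsymbol{\mu},\nu)]:S^{\lambda}\,\big].
\]
I would then decompose the permutation module into $\mathfrak{S}_n$-orbits: an orbit consists of all rearrangements of a fixed multiset of elements of $\mathrm{SSTab}(\mu)$ whose weights sum to $\nu$, with point stabilizer the Young subgroup $\mathfrak{S}_\rho$ where $\rho$ records the multiplicities taken in the order of Definition.\ref{totalorder}; such an orbit contributes $M^{\rho}$, and by Young's rule $[M^{\rho}:S^{\lambda}]=K_{\lambda\rho}$. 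Hence $\dim V_{S[\mu]}(\nu)=\sum_{\mathcal{O}}K_{\lambda,\rho(\mathcal{O})}$, the sum over these orbits.

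Finally I would read this sum combinatorially. Summing $K_{\lambda\rho}$ over all content vectors $m\colon\mathrm{SSTab}(\mu)\to\mathbb{Z}_{\ge 0}$ with $\sum_U m(U)\,\mathrm{wt}(U)=\nu$, where $\rho$ denotes the composition attached to $m$, counts semistandard tableaux of shape $\lambda$ in the alphabet $\mathrm{SSTab}(\mu)$ whose total weight is $\nu$. Applying the Robinson--Schensted correspondence with respect to the total order of Definition.\ref{totalorder} and fixing the recording tableau to be $S$ identifies these with $\{\boldsymbol{T}\in\mathrm{SSTab}(\boldsymbol{\mu},\nu)\mid \boldsymbol{T}\leftrightarrow(\,\cdot\,,S)\}=\mathrm{SSTab}(S[\boldsymbol{\mu}],\nu)$; the weight matches because the content of the insertion tableau equals the content of the word $T_0\cdots T_{n-1}$, so its weighted sum is exactly the $GL_N$-weight $\sum_i\mathrm{wt}(T_i)$ of $\boldsymbol{T}$. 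This yields $\dim V_{S[\mu]}(\nu)=\#\mathrm{SSTab}(S[\boldsymbol{\mu}],\nu)$.

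The only real difficulty is the two-level bookkeeping: the tableaux of shape $\mu$ serve simultaneously as $GL_N$-weight vectors and as letters of an alphabet out of which the shape-$\lambda$ tableau is built, and one must check that the $\nu$-weight on the left corresponds exactly to the weighted content of the outer tableau on the right. Everything else is a routine assembly of Schur--Weyl duality, the idempotent property of the Young symmetrizer, Young's rule, and the content-preserving nature of RSK.
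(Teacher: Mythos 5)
Your argument is correct. Note, however, that the paper does not prove Proposition \ref{plethysmthm} at all: it is imported verbatim from the author's preprint \cite{I} (Theorem 3.5 there), so there is no internal proof to compare against. What your write-up does is supply a self-contained proof: identifying the $\nu$-weight space of $V_\mu^{\otimes n}$ with the permutation module $\mathbb{C}[\mathrm{SSTab}(\boldsymbol{\mu},\nu)]$ (legitimate, since $\mathfrak{S}_n$ permutes the chosen tensor-product weight basis without coefficients), using that $Y_S$ is, up to a nonzero scalar, a primitive idempotent with $\mathbb{C}\mathfrak{S}_nY_S\simeq S^{\lambda}$ so that $\dim Y_S(M)=[M:S^{\lambda}]$, decomposing into orbits isomorphic to $M^{\rho}$ and applying Young's rule, and finally converting $\sum_{\rho}K_{\lambda\rho}$ into a count of tableaux of shape $\lambda$ with entries in the alphabet $\mathrm{SSTab}(\mu)$, which Robinson--Schensted (with the order of Definition \ref{totalorder}, recording tableau fixed to $S$) matches with $\mathrm{SSTab}(S[\boldsymbol{\mu}],\nu)$; the weight bookkeeping you flag is exactly right, since RSK preserves content and the $GL_N$-weight depends only on the multiset of parts. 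This is also consistent in spirit with how the paper later uses the result (the proof of Proposition \ref{append2} combines Schur--Weyl duality with Proposition \ref{plethysmthm} and the Robinson--Schensted correspondence), and it has the merit of making the statement independent of the unpublished reference \cite{I}. One small point of hygiene: the stabilizer of a tuple is only conjugate to the Young subgroup $\mathfrak{S}_\rho$, which of course does not affect the induced module, and the idempotent fact $Y_S^2=\frac{n!}{\dim S^{\lambda}}Y_S$ deserves an explicit citation (e.g.\ to a standard text) if you were to write this out formally.
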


Since the Robinson-Schensted correspondence preserves the major index, the following holds.

\begin{lemma}
For $0 \leq i \leq n-1$, 
$$ \mathrm{SSTab}(\boldsymbol{\mu},\nu;i) = \bigsqcup_{S \in \mathrm{STab}(n),\, \mathrm{maj}(S) \equiv i \, \mathrm{mod}\,\,n} 
  \mathrm{SSTab}(S[\boldsymbol{\mu}],\nu).  $$
\end{lemma}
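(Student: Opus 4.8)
The plan is to exhibit the claimed identity as the restriction, to a single residue class of the major index modulo $n$, of the partition of $\mathrm{SSTab}(\boldsymbol{\mu},\nu)$ induced by the Robinson--Schensted recording tableau.

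First I would observe that a tuple $\boldsymbol{T}=(T_0,\ldots,T_{n-1})\in\mathrm{SSTab}(\boldsymbol{\mu},\nu)$ is, by construction, a word of length $n$ in the totally ordered set $\Gamma=\mathrm{SSTab}(\mu)$ (Definition \ref{totalorder}), and $\mathrm{maj}(\boldsymbol{T})$ is precisely the major index of this word in the sense of Definition \ref{major}. Applying the Robinson--Schensted algorithm over $\Gamma$ to $\boldsymbol{T}$ produces a pair $(P,Q)$ with $Q\in\mathrm{STab}(\lambda)$ for a unique $\lambda\vdash n$, and by the definition of $\mathrm{SSTab}(S[\boldsymbol{\mu}],\nu)$ the tuple $\boldsymbol{T}$ lies in $\mathrm{SSTab}(S[\boldsymbol{\mu}],\nu)$ exactly when $Q=S$. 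Since each $\boldsymbol{T}$ has one and only one recording tableau, this already gives the disjoint decomposition
$$ \mathrm{SSTab}(\boldsymbol{\mu},\nu)=\bigsqcup_{S\in\mathrm{STab}(n)}\mathrm{SSTab}(S[\boldsymbol{\mu}],\nu). $$

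Next I would invoke the fact recalled above (cf.\ \cite{BS}) that the Robinson--Schensted correspondence preserves the major index: if $\boldsymbol{T}\leftrightarrow(P,S)$ then $\mathrm{maj}(\boldsymbol{T})=\mathrm{maj}(S)$. Hence $\boldsymbol{T}\in\mathrm{SSTab}(\boldsymbol{\mu},\nu;i)$, i.e.\ $\mathrm{maj}(\boldsymbol{T})\equiv i$ modulo $n$, if and only if its recording tableau $S$ satisfies $\mathrm{maj}(S)\equiv i$ modulo $n$. Intersecting both sides of the displayed decomposition with $\mathrm{SSTab}(\boldsymbol{\mu},\nu;i)$ and using this equivalence, every block indexed by an $S$ with $\mathrm{maj}(S)\not\equiv i$ disappears while every block with $\mathrm{maj}(S)\equiv i$ is contained wholesale in $\mathrm{SSTab}(\boldsymbol{\mu},\nu;i)$, which yields the asserted equality.

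The only point needing care—hardly a genuine obstacle—is to check that the two occurrences of ``major index'' in play really coincide: the combinatorial $\mathrm{maj}$ of a word over $\Gamma$ appearing in Theorem \ref{Foata} and in the Robinson--Schensted property, versus the quantity $\mathrm{maj}(\boldsymbol{T})$ of Definition \ref{major} applied to $\boldsymbol{T}$ regarded as an $n$-tuple in $\Gamma$. These agree verbatim, so no additional argument is required and the lemma follows at once.
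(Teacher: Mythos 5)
Your argument is correct and is exactly the paper's route: the paper states the lemma as an immediate consequence of the fact that the Robinson--Schensted correspondence (over the totally ordered alphabet $\mathrm{SSTab}(\mu)$) preserves the major index, i.e.\ $\mathrm{maj}(\boldsymbol{T})=\mathrm{maj}(S)$, so the decomposition of $\mathrm{SSTab}(\boldsymbol{\mu},\nu)$ by recording tableau restricts to each residue class of $\mathrm{maj}$ modulo $n$. You have merely written out the details that the paper leaves implicit; nothing further is needed.
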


Thus, by restricting $\alpha$, (which appears in Theorem.\ref{mainthm}), to $\mathrm{SSTab}(S[\boldsymbol{\mu}],\nu)$, 
we get a polynomial
$$ G_{S[\mu],\nu}(q) \coloneqq \sum_{\boldsymbol{T} \in \mathrm{SSTab}(S[\boldsymbol{\mu}],\nu)} 
   q^{n \alpha(\boldsymbol{T}) + \mathrm{maj}(S) + d_{\boldsymbol{\mu}}} . $$
Set 
$$ a_{S[\boldsymbol{\mu}]}^{\nu}(q)=
  \sum_{\rho \vdash n}K_{\rho,\nu}^{-1} G_{S[\boldsymbol{\mu}],\rho}(q) \,\,, $$
where $K_{\rho,\nu}^{-1}$ is the inverse Kostka number defining in section $2$.

However NOT all $a_{S[\boldsymbol{\mu}]}^{\nu}(q)$ belong to $\mathbb{Z}_{\geq 0}[q]$,
that is $a_{S[\boldsymbol{\mu}]}^{\nu}(q)$ may has \textit{negative} coefficients.

\begin{problem}
There is a statistics $\widetilde{\alpha} \colon \mathrm{SSTab}(S[\mu]) \rightarrow \mathbb{Z}$ satisfying
$({\mathrm{i}})$. $\widetilde{a}_{S[\boldsymbol{\mu}]}^{\nu}(q) \in \mathbb{Z}_{\geq 0}[q],$ and  

$({\mathrm{ii}})$. $\alpha$ and $\widetilde{\alpha}$ have the same distribution on $\mathrm{SSTab}(\boldsymbol{\mu},\nu;i)$, that is
$$ \sum_{\boldsymbol{T} \in \mathrm{SSTab}(\boldsymbol{\mu},\nu;i)}
   q^{n \widetilde{\alpha}(\boldsymbol{T}) + i + d_{\boldsymbol{\mu}}} = G_{\boldsymbol{\mu},\nu}^{(i)}(q) $$
  for any $i$.

Where $\widetilde{a}_{S[\boldsymbol{\mu}]}^{\nu}(q)$ is the polynomial obtained from $\widetilde{\alpha}$ instead of $\alpha$.
\end{problem}

%%%%%%%%%%%%%%%%
\subsection{Proof of Proposition.\ref{append2}}

Before we proof Proposition.\ref{append2}, we prepare some results.

Let $S^{\lambda}$ be the irreducible representation of $\mathfrak{S}_n$ corresponding to $\lambda \vdash n$.
Let $C_n$ be the cyclic group generated by $\gamma$,
 and $\zeta_n^i$ denotes the 1-dimensional representation of $C_n$.

\vspace{1em}

We use the following theorem.

\begin{theorem}[\cite{KW}, Corollary of Theorem.1]
$$ \mathrm{Ind}_{C_n}^{\mathfrak{S}_n} (\zeta_n^i) \simeq
  \bigoplus_{\lambda \vdash n} {S^{\lambda}}^{ \oplus K_{\lambda}^{(i)} } $$
as a $\mathfrak{S_n}$-module,
  where $K_{\lambda}^{(i)}=\#  \{ S \in \mathrm{STab}(\lambda) \,|\, \mathrm{maj}(S) \equiv i \,\, \mathrm{mod} \,\, n \}$.
\end{theorem}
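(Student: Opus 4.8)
The plan is to reduce the statement to a single classical identity relating symmetric group characters to the major‑index generating polynomial of standard tableaux, and then to establish that identity via the Murnaghan--Nakayama rule.

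\textbf{Step 1: reduction by Frobenius reciprocity and Fourier inversion on $C_n$.} Fix $\lambda \vdash n$. By Frobenius reciprocity $\bigl[\mathrm{Ind}_{C_n}^{\mathfrak{S}_n}(\zeta_n^i):S^\lambda\bigr] = \bigl[\mathrm{Res}_{C_n}^{\mathfrak{S}_n} S^\lambda : \zeta_n^i\bigr]$, and since $C_n = \langle\gamma\rangle$ is cyclic of order $n$, Fourier inversion on $C_n$ gives
\begin{equation*}
\bigl[\mathrm{Res}_{C_n}^{\mathfrak{S}_n} S^\lambda : \zeta_n^i\bigr] = \frac{1}{n}\sum_{j=0}^{n-1} \zeta_n^{-ij}\,\chi^\lambda(\gamma^j),
\end{equation*}
where $\chi^\lambda$ is the character of $S^\lambda$. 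On the combinatorial side, writing $f^\lambda(q) = \sum_{S\in\mathrm{STab}(\lambda)} q^{\mathrm{maj}(S)}$, the same roots‑of‑unity filter applied to the polynomial $f^\lambda$ yields $K_\lambda^{(i)} = \frac1n\sum_{j=0}^{n-1}\zeta_n^{-ij}\,f^\lambda(\zeta_n^j)$. Comparing the two expressions, the theorem follows once we prove
\begin{equation*}
\chi^\lambda(\gamma^j) = f^\lambda(\zeta_n^j) \qquad (0 \le j \le n-1).
\end{equation*}

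\textbf{Step 2: the identity $\chi^\lambda(\gamma^j) = f^\lambda(\zeta_n^j)$.} Fix $j$ and put $d = n/\gcd(n,j)$, so that $\zeta_n^j$ is a primitive $d$-th root of unity and $\gamma^j$ has cycle type $(d^{\,n/d})$. For the left‑hand side, iterating the Murnaghan--Nakayama rule to remove $n/d$ rim $d$-hooks shows that $\chi^\lambda(\gamma^j) = 0$ unless the $d$-core of $\lambda$ is empty, in which case $\chi^\lambda(\gamma^j)$ equals a sign times $\binom{n/d}{m_0,\dots,m_{d-1}}\prod_k \dim S^{\lambda^{(k)}}$, where $(\lambda^{(0)},\dots,\lambda^{(d-1)})$ is the $d$-quotient of $\lambda$ and $m_k = |\lambda^{(k)}|$. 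For the right‑hand side, one uses Stanley's $q$-analogue of the hook length formula, $f^\lambda(q) = q^{n(\lambda)}\,[n]_q!\big/\prod_{c\in\lambda}[h(c)]_q$, and evaluates at $q = \zeta_n^j$: the numerator $[n]_q!$ vanishes there to order $n/d$ (the number of $k\in\{1,\dots,n\}$ with $d\mid k$), while $\prod_c[h(c)]_q$ vanishes to order equal to the number of cells of $\lambda$ with hook length divisible by $d$, which by the abacus description of $d$-cores is $n/d$ exactly when the $d$-core is empty and strictly less otherwise. In the non‑empty‑core case $f^\lambda(\zeta_n^j) = 0$; in the empty‑core case a first‑order expansion near $\zeta_n^j$ leaves the surviving factor $\binom{n/d}{m_0,\dots,m_{d-1}}\prod_k \dim S^{\lambda^{(k)}}$ (because the hook lengths of $\lambda$ divisible by $d$, divided by $d$, are precisely the hook lengths of the $\lambda^{(k)}$), while the prefactor $q^{n(\lambda)}$ together with the residual powers of $(1-\zeta_n^j)$ contributes exactly the Murnaghan--Nakayama sign. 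This matches the character value and proves the identity.

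\textbf{Main obstacle, and an alternative.} The genuinely delicate point is the $q$-side bookkeeping in Step 2: pinning down the order of vanishing of $[n]_q!\big/\prod_c[h(c)]_q$ at $\zeta_n^j$, identifying the limit with the $d$-quotient data, and tracking the sign coming from $q^{n(\lambda)}$. One can avoid this altogether by invoking the cyclic sieving phenomenon of Reiner--Stanton--White for standard tableaux, equivalently Springer's theory of regular elements for the Coxeter element of type $A$, which is precisely the assertion $\chi^\lambda(\gamma^j) = f^\lambda(\zeta_n^j)$; with that in hand, Step 1 completes the proof. This is, in essence, the argument of Kra\'skiewicz--Weyman.
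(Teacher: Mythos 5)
The paper offers no proof of this statement: it is imported verbatim from Kra\'skiewicz--Weyman \cite{KW} and only its consequence $\dim S^{\lambda}[\zeta_n^i]=K_{\lambda}^{(i)}$ (via Frobenius reciprocity) is used later, so your proposal is doing strictly more than the paper does. Your skeleton is sound: Frobenius reciprocity together with the roots-of-unity filter on $C_n$ correctly reduces the theorem to the single identity $\chi^{\lambda}(\gamma^j)=f^{\lambda}(\zeta_n^j)$ for all $j$, which is indeed a known result (Springer's theory of regular elements in type $A$, the fake-degree evaluation underlying \cite{KW}, and an instance of Reiner--Stanton--White cyclic sieving). Your direct route in Step 2 is also the standard one, and its main quantitative claims are right: the order of vanishing of $[n]_q!$ at a primitive $d$-th root of unity is $n/d$, the order of vanishing of $\prod_c[h(c)]_q$ is the number of hooks divisible by $d$, equality holds exactly when the $d$-core is empty, and in that case the surviving ratio of vanishing factors is $\binom{n/d}{m_0,\ldots,m_{d-1}}\prod_k \dim S^{\lambda^{(k)}}$ via the $d$-quotient, matching the unsigned Murnaghan--Nakayama count. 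The one genuine gap is the one you flag: showing that $\zeta_n^{j\,n(\lambda)}$ times the ratio of the \emph{non}-vanishing $q$-integers equals the common sign of all rim-hook tableaux; note that once the vanishing orders match there are no ``residual powers of $(1-\zeta_n^j)$'' left, so the sign lives entirely in that prefactor and ratio, and pinning it down requires the classical (James--Kerber style) computation. Outsourcing this to Springer or to the cyclic sieving literature is legitimate and non-circular, since those results have independent proofs, but then your argument amounts to deducing \cite{KW}'s corollary from an equivalent known theorem rather than giving a self-contained proof --- which is still more than the paper's bare citation provides.
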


Thus from Frobenius reciprocity, we have the followings.

\begin{corollary}
$$ \mathrm{dim} S^{\lambda}[\zeta_n^i] = K_{\lambda}^{(i)}. $$
\label{majorcor}
\end{corollary}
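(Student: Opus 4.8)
The plan is to deduce this directly from the theorem of \cite{KW} quoted just above by Frobenius reciprocity. First I would unwind the definitions: by construction $S^{\lambda}[\zeta_n^i]$ is the $\zeta_n^i$-eigenspace of the action of $\gamma$ on $S^{\lambda}$, and since $C_n=\langle\gamma\rangle$ is cyclic and $\zeta_n^i$ denotes the one-dimensional $C_n$-module on which $\gamma$ acts by multiplication by $\zeta_n^i$, a $C_n$-equivariant map out of $\zeta_n^i$ is determined by the image of a generator, which may be any $\zeta_n^i$-eigenvector of $\gamma$ in $S^{\lambda}$. Hence
$$ \dim S^{\lambda}[\zeta_n^i] = \dim \mathrm{Hom}_{C_n}\bigl(\zeta_n^i,\ \mathrm{Res}^{\mathfrak{S}_n}_{C_n} S^{\lambda}\bigr). $$

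Next I would apply Frobenius reciprocity over $\mathbb{C}$, which gives a canonical isomorphism
$$ \mathrm{Hom}_{C_n}\bigl(\zeta_n^i,\ \mathrm{Res}^{\mathfrak{S}_n}_{C_n} S^{\lambda}\bigr) \simeq \mathrm{Hom}_{\mathfrak{S}_n}\bigl(\mathrm{Ind}^{\mathfrak{S}_n}_{C_n}(\zeta_n^i),\ S^{\lambda}\bigr). $$
By the quoted theorem, $\mathrm{Ind}^{\mathfrak{S}_n}_{C_n}(\zeta_n^i)\simeq\bigoplus_{\rho\vdash n}(S^{\rho})^{\oplus K_{\rho}^{(i)}}$, so, using that the $S^{\rho}$ are pairwise non-isomorphic irreducibles and Schur's lemma, the right-hand $\mathrm{Hom}$-space has dimension equal to the multiplicity of $S^{\lambda}$ in the induced module, namely $K_{\lambda}^{(i)}$. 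Chaining the displays yields $\dim S^{\lambda}[\zeta_n^i]=K_{\lambda}^{(i)}$.

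There is no substantial obstacle; this is a routine character-theoretic argument, and the only points needing care are the convention that $[\zeta_n^i]$ refers to the $\gamma$-eigenvalue $\zeta_n^i$ (matching the normalization of the one-dimensional module $\zeta_n^i$ used in the induction) and the hypothesis that we work over $\mathbb{C}$, so that $\mathbb{C}\mathfrak{S}_n$ is semisimple, complete reducibility holds, and Schur's lemma applies. If one prefers to avoid $\mathrm{Hom}$-spaces entirely, an equivalent route is to compare the value of the character of $\mathrm{Ind}^{\mathfrak{S}_n}_{C_n}(\zeta_n^i)$ with $\sum_{\rho}K_{\rho}^{(i)}\chi^{\rho}$ and extract the $\zeta_n^i$-eigenspace dimension of $\gamma$ on $S^{\lambda}$, but the Frobenius-reciprocity argument is the shortest.
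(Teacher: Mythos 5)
Your argument is correct and is exactly the paper's route: the paper also deduces the corollary from the Kra\'skiewicz--Weyman theorem by Frobenius reciprocity, identifying $\dim S^{\lambda}[\zeta_n^i]$ with the multiplicity of $S^{\lambda}$ in $\mathrm{Ind}_{C_n}^{\mathfrak{S}_n}(\zeta_n^i)$. You have merely written out the eigenspace--$\mathrm{Hom}$ identification and Schur's lemma step that the paper leaves implicit.
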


\begin{proof}[Proof of Proposition.\ref{append2}]
From the Schur-Weyl duality,
$$ V_\mu^{\otimes n}[\zeta_n^i](\nu) \simeq \bigoplus_{\lambda \vdash n} S^{\lambda}[\zeta_n^i] \otimes V_{\lambda [\mu]}(\nu) ,$$
as a $\mathbb{C}$-linear space.
Hence by Corollary.\ref{majorcor} and Proposition.\ref{plethysmthm},
\begin{align*}
\mathrm{dim} V_\mu^{\otimes n}[\zeta_n^i](\nu)
  & = \sum_{\lambda \vdash n} K_{\lambda}^{(i)} K_{\lambda[\mu],\nu} \\
  & = K_{\boldsymbol{\mu},\nu}^{(i)},
\end{align*}
where the last equality follows from the Robinson-Schensted correspondence.

\end{proof}

%%%%%%%%%%%%%%%%%%%%%
%
% Bibliography
%
%%%%%%%%%%%%%%%%%%%%%

\end{document}